\documentclass[10pt,leqno]{amsart}
\usepackage{graphicx}
\usepackage{indentfirst,csquotes}

\usepackage{amssymb,amsthm,amsmath}
\usepackage{xcolor,paralist,hyperref,titlesec,fancyhdr,etoolbox}
\newtheorem{theorem}{Theorem}[]

\newtheorem{lemma}[theorem]{Lemma}

\newtheorem{corollary}[theorem]{Corollary}

\titleformat{\section}[display]{\normalfont\huge\bfseries\centering}{\centering\chaptertitlename\thechapter}{10pt}{\Large}
\titlespacing*{\section}{0pt}{0ex}{0ex}

\hypersetup{ colorlinks=true, linkcolor=black, filecolor=black, urlcolor=black }

\usepackage{lipsum}

\begin{document}
\title{Mitchell Rank for Supercompactness} %%%%%%%%%%%%
\author[Initial Surname]{Erin Carmody}
\date{\today}
\address{}
\email{carmodek@delhi.edu}
\maketitle

\let\thefootnote\relax
\footnotetext{} %%%%%%%%%%

\begin{abstract}
This paper defines a Mitchell rank for supercompact cardinals. If $\kappa$ is a $\theta$-supercompact cardinal then $o_{\theta-sc}(\kappa) = \sup \{ o_{\theta-sc}(\mu) + 1 \ | \ \mu \in m(\kappa)\}$, where $m(\kappa)$ is the collection of normal fine measures on $P_{\kappa}\theta$. We show how to force to kill the degree of a measurable cardinal $\kappa$ to any specified degree which is less than or equal to the degree of $\kappa$ in the ground model. We will also show how to softly kill the Mitchell rank for supercompactness of any supercompact cardinal so that in the forcing extension it is any desired degree less than or equal to its degree in the ground model, along with some results concerning strongly compact cardinals. 
\end{abstract} %%%%%%%%%

\bigskip
\begin{center}\textbf{INTRODUCTION}\end{center}
\vspace{.2 in}

Almost every large cardinal studied so far comes with degrees of its existence, or seems to have the potential for degrees. For example, Mitchell rank for measurable cardinals provides degrees for measurable cardinals. In a previous paper [2], I showed how to precisely define the degrees of inaccessible cardinals and Mahlo cardinals. In another paper, Gitman and Habič [3] define degrees of Ramsey cardinals. This paper defines a Mitchell rank for supercompact cardinals. Along with the degrees of a large cardinal, comes the forcings which can kill a large cardinal to any desired degree. It is not yet known if for every degree of a large cardinal there is a forcing to kill it so that in the forcing extension it has the desired maximum rank or degree, but so far they seem to go hand in hand. In this paper, I will also show how to force to cut down a measurable cardinal's Mitchell rank to any (possible) desired rank, and then we will see how to softly kill Mitchell rank for supercompactness.

Before we begin with the definition of measurable cardinals and Mitchell rank for measurable and supercompact cardinals, let us see some theorems which fit into the killing-them-softly theme including a result by Hamkins and Shelah which shows how to force to kill the already known degrees of supercompact cardinals (the Mitchell rank for supercompactness further differentiates levels of supercompactness).

\begin{theorem}[Schanker]  If $\kappa$ is weakly measurable, then the measurability of $\kappa$ can be destroyed while preserving that $\kappa$ is weakly measurable. 
\end{theorem}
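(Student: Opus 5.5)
The plan is to start from a weakly measurable $\kappa$ and force so that $\kappa$ is no longer measurable but remains weakly measurable. Recall that $\kappa$ is \emph{weakly measurable} if for every family $A\subseteq P(\kappa)$ with $|A|=\kappa^+$ there is a nonprincipal $\kappa$-complete ultrafilter on the $\kappa$-algebra generated by $A$. Equivalently, for every transitive $M\models ZFC^-$ with $\kappa\in M$, $M^{<\kappa}\subseteq M$ and $|M|=\kappa^+$, there is an elementary embedding $j:M\to N$ with critical point $\kappa$.

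Weak measurability is only strictly weaker than measurability when $2^\kappa>\kappa^+$. By a preliminary forcing, for instance adding $\kappa^{++}$ Cohen subsets of $\kappa$ after a Laver-style or reverse Easton preparation, we may arrange $2^\kappa\ge\kappa^{++}$ while preserving measurability. Alternatively, if $\kappa$ is not measurable there is nothing to do.

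The main step is a reverse Easton iteration $\mathbb P$ of length $\kappa+1$. At each inaccessible $\gamma\le\kappa$ it adds a nonreflecting stationary subset of $\gamma$ consisting of cofinality-$\omega$ ordinals, using the forcing by initial segments closed under nonreflection, which is ${<}\gamma$-strategically closed. In $V[G]$ we have a nonreflecting stationary $S\subseteq\kappa$, so $\kappa$ is not even weakly compact, which is too strong. We therefore weaken the stage-$\kappa$ forcing. The standard choice, following Hamkins–Shelah and Schanker, is instead to add at stage $\kappa$ a subset $S\subseteq\kappa$ that codes the failure of measurability by an ``anticipating'' condition. Concretely, force at stage $\kappa$ with $\mathrm{Add}(\kappa,1)$ and at stages $\gamma<\kappa$ with $\mathrm{Add}(\gamma,1)$ only at non-measurable inaccessibles $\gamma$, so that the generic $g\subseteq\kappa$ satisfies: for every measurable $\gamma<\kappa$ in $V[G]$, $g\cap\gamma$ is not in the stage-$\gamma$ generic ground model. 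Any measure on $\kappa$ in $V[G]$ gives $j$ with $j(g)\cap\kappa=g$. Here $\kappa$ must be measurable in $M[j(G)]$, but the stage-$\kappa$ generic of $j(\mathbb P)$ would then have to be $g$ at a measurable stage, which is a contradiction. This kills measurability.

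For weak measurability, fix $M\in V[G]$ of size $\kappa^+$ closed under ${<}\kappa$-sequences. Choose $M_0\in V$ of size $\kappa^+$, $\kappa$-closed, with $M\in M_0[G]$. Let $j:V\to N$ come from a measure in $V$ with $N^{\kappa}\subseteq N$, and restrict attention to $j\restriction M_0$. We then lift to $M_0[G]$. Below $\kappa$ the generics agree. On the interval $(\kappa,j(\kappa))$, $N[G]$ contains only $\kappa^+$-many dense sets relevant to $j(M_0)$, counted in $V$ and using $|M_0|=\kappa^+$, and $j(\mathbb P)_{\mathrm{tail}}$ is ${\le}\kappa$-closed. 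So we can build the generic by hand: we meet $\kappa^+$ dense sets in $\kappa^+$ steps, closure handling the limit stages. At stage $\kappa$ we choose the generic \emph{different} from $g$, for instance by flipping one bit, so the measurability obstruction is avoided because $\kappa$ is nonmeasurable in the relevant structure. At stage $j(\kappa)$ we use a master condition $\bigcup j''g$, which is allowed because we only need the lift on the small model.

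This construction is the key difference from full measurability. The lift succeeds for models of size $\kappa^+$ precisely because we meet only $\kappa^+$ dense sets, whereas a full measure would require $2^\kappa>\kappa^+$ many. The main obstacle is arranging the stage-$\kappa$ coding so that it provably kills measurability yet allows the bounded diagonalization. I would follow Schanker's choice of forcing at non-measurable stages, verifying carefully that in the lifted $j(M_0)[j(G)]$ the cardinal $\kappa$ need not be measurable, because the relevant measures are missing from a model of size $\kappa^+$.
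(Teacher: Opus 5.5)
The paper only cites this theorem without proof, so your argument has to stand on its own, and it does not: both of its load-bearing steps fail.

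\textbf{The preliminary step is not available.} From a measurable cardinal alone you cannot force $2^\kappa\ge\kappa^{++}$ while keeping $\kappa$ measurable. A measurable $\kappa$ with $2^\kappa>\kappa^+$ has consistency strength $o(\kappa)=\kappa^{++}$ (Mitchell, Gitik), and the preparations that do achieve it use substantially stronger hypotheses.

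\textbf{The killing step is wrong.} Your contradiction rests on the claim that $\kappa$ must be measurable in $M[j(G)]$, and nothing forces this. Take $j$ to be the ultrapower by a normal measure of Mitchell rank $0$, so $\kappa$ is not measurable in $M$. Then stage $\kappa$ of $j(\mathbb{P})$ is the nontrivial $\mathrm{Add}(\kappa,1)$, $g$ is a legitimate generic there, and $g$ itself serves as master condition at stage $j(\kappa)$. Over a GCH ground this is exactly the standard lifting showing that your iteration \emph{preserves} measurability. The ``anticipation'' property you ascribe to $g$ is never established. Flipping a bit of the stage-$\kappa$ generic addresses no actual obstruction. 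In any case $\mathbb{P}\in M_0$ gives $(j\upharpoonright M_0)(\mathbb{P})=j(\mathbb{P})$, so the small-model lift faces exactly the same stage-$\kappa$ iterand as the full lift.

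\textbf{The missing idea.} You should neither keep measurability while enlarging $2^\kappa$ nor kill it by coding. As you note, the final model must satisfy $2^\kappa>\kappa^+$. If GCH holds below $\kappa$, this alone rules out measurability: a normal ultrapower $j:V\to M$ would give $(2^\kappa)^M=\kappa^+$ by elementarity, hence $2^\kappa=\kappa^+$, since $P(\kappa)\subseteq M$ and $(\kappa^+)^M=\kappa^+$.

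So, when $\kappa$ is measurable:
\begin{enumerate}
\item Force GCH.
\item Do a reverse Easton iteration $\mathbb{P}_\kappa$ adding Cohen subsets at inaccessible $\gamma<\kappa$ (say $\mathrm{Add}(\gamma,\gamma^+)$, which keeps GCH below $\kappa$).
\item Force with $\mathrm{Add}(\kappa,\kappa^{++})$.
\end{enumerate}
Non-measurability is then immediate.

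For weak measurability, nice names for $\kappa^+$ subsets of $\kappa$ use only a set $X$ of $\kappa^+$ coordinates. So these sets lie in $V[G][H\upharpoonright X]$, where $H\upharpoonright X$ can be identified with an $\mathrm{Add}(\kappa,\kappa^+)$-generic $h$. There a normal ultrapower $j:V\to M$ lifts:
\begin{itemize}
\item use $h$ at stage $\kappa$ of $j(\mathbb{P}_\kappa)$;
\item diagonalize the tail, since GCH leaves only $\kappa^+$ dense sets;
\item at stage $j(\kappa)$, build the generic by a descending $\kappa^+$-sequence that successively absorbs the partial master conditions $\bigcup j''(h\upharpoonright\xi)$, $\xi<\kappa^+$. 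Each has size $\kappa$ and lies in $M[j(G)]$. This suffices because every maximal antichain there is supported below some $j(\xi)$, as $j''\kappa^+$ is cofinal in $j(\kappa^+)$.
\end{itemize}
The induced measure measures the given sets. It remains a $\kappa$-complete nonprincipal filter in $V[G][H]$, because the remaining Cohen forcing is ${<}\kappa$-closed. This is where your ``only $\kappa^+$ dense sets'' intuition genuinely belongs.
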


\begin{theorem}[Gitman]  If $\kappa$ is strongly Ramsey, then there is a forcing extension where $\kappa$ is not weakly measurable but is still strongly Ramsey.
\end{theorem}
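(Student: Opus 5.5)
Here is the plan. Recall that $\kappa$ is strongly Ramsey if every $A\subseteq\kappa$ lies in a $\kappa$-model $M$ (transitive, $|M|=\kappa$, $M^{<\kappa}\subseteq M$, $M\models ZFC^-$) for which there is a weakly amenable $M$-ultrafilter on $\kappa$. Also, $\kappa$ is weakly measurable if for every family $\mathcal{A}\subseteq P(\kappa)$ of size $\kappa^+$ there is a nonprincipal $\kappa$-complete filter measuring $\mathcal{A}$; under GCH at $\kappa$ this is just measurability.

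The plan is to make $2^\kappa$ large while adding a subset of $\kappa$ that kills all measures measuring $\kappa^+$ many sets, but gently enough that small-model embeddings survive. First I would force GCH up to $\kappa$ by a preliminary Easton-support reverse iteration. Strong Ramseyness is preserved by such iterations by the usual lifting argument for $\kappa$-model embeddings $j:M\to N$: the relevant master conditions exist because $N^{<\kappa}\subseteq N$ and the tail forcing is $\le\kappa$-closed from the perspective of $N$, and small models allow diagonalization. Then I would perform an Easton-support iteration $\mathbb{P}_\kappa$ of length $\kappa$ which at inaccessible $\gamma<\kappa$ adds a Cohen subset (or a nonreflecting/Kurepa-type object) to $\gamma^+$. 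Stage $\kappa$ adds $\kappa^{++}$ Cohen subsets of $\kappa^+$ (or adds $\kappa^{++}$ subsets of $\kappa$ by $\mathrm{Add}(\kappa,\kappa^{++})$), arranged so that weak measurability fails.

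The concrete mechanism I would use to kill weak measurability follows Schanker's analysis in reverse. In the extension $V[G]$ with $G\subseteq\mathbb{P}_\kappa$ adding Cohen subsets of $\gamma^+$ at each inaccessible $\gamma$, suppose some filter measured a suitable family of $\kappa^+$ sets coding the generics. Then a transitive collapse argument gives an elementary $j:M\to N$ with $M$ of size $\kappa^+$ containing $\mathbb{P}_\kappa$ and $G$. In $N$, stage $\kappa$ of $j(\mathbb{P})$ would add a Cohen subset of $\kappa^+$ generic over $N$, and it would have to be $j(g)\restriction\kappa$-coherent. A density argument against the $\kappa^+$-sized family (choosing $\mathcal{A}$ to include all $\kappa^+$ subsets decoding the stage-$\gamma$ generics) shows that no such filter can be consistent. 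The key point is arranging that the stage-$\kappa$ object required by $j$ cannot exist in $V[G]$; for instance, one can make $G$'s coding sequence nowhere "guessable" at $\kappa$.

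For strong Ramseyness in $V[G]$, given $A\subseteq\kappa$ in $V[G]$, I would pick a name $\dot A$ in $V$ and a $\kappa$-model $M\ni\dot A,\mathbb{P}_\kappa$ with a weakly amenable $M$-ultrafilter, giving $j:M\to N$. Since $|N|=\kappa$, only $\kappa$ many dense sets of the tail $j(\mathbb{P})_{\kappa,j(\kappa)}$ need meeting, and that tail is $\le\kappa$-closed in $N$ and hence $<\kappa^+$-closed in $V[G]$ because $N^{<\kappa}\subseteq N$. So a generic can be built in $V[G]$, and $j$ lifts to $M[G]\to N[G][H]$. Weak amenability is kept because the lifted ultrafilter is determined by $j$ and $M[G]$ has the same subsets-of-$\kappa$ structure over $M$.

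The main obstacle is the tension between these two halves. The lifting for $\kappa$-models must succeed, since it needs only $\kappa$-many dense sets, while the lifting for size-$\kappa^+$ models must fail. I would exploit exactly this cardinality gap: with stage-$\gamma$ forcing at $\gamma^+$, there are $\kappa^+$ dense sets at level $\kappa$ for a $\kappa^+$-model, and the construction of the required generic at $\kappa^+$ is diagonalizable only with $\kappa^+$-closure. I would choose the stage-$\kappa$ forcing so as to destroy precisely that.
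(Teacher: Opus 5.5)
The paper only cites this theorem of Gitman and gives no proof of it, so I am judging your proposal on its own. It has a genuine gap at its central step: you never exhibit a forcing that actually destroys weak measurability, and the concrete candidates you name do not do so.

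\emph{Option 1: stage $\kappa$ is $\mathrm{Add}(\kappa^+,\kappa^{++})$.} That poset is $\le\kappa$-closed, so it adds no subsets of $\kappa$ and no new short sequences. It does not make $2^\kappa$ large, contrary to your stated plan, and every normal measure on $\kappa$ survives it. Everything then rests on $\mathbb{P}_\kappa$, and adding Cohen subsets of $\gamma^+$ at inaccessible $\gamma<\kappa$ preserves measurability. Under GCH, take a normal ultrapower $j:V\to M$. Stage $\kappa$ of $j(\mathbb{P}_\kappa)$ is $\mathrm{Add}(\kappa^+,1)^{M[G_\kappa]}$, which is $\le\kappa$-closed. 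Also $M[G_\kappa]^\kappa\subseteq M[G_\kappa]$ holds in $V[G_\kappa]$, and $M[G_\kappa]$ has at most $|(2^{\kappa^+})^M|\le|j(\kappa)|=\kappa^+$ dense subsets of it. So a generic can be built by diagonalization in $V[G_\kappa]$, the tail is handled the same way, and $j$ lifts. Your cardinality heuristic is therefore backwards: meeting $\kappa^+$ dense sets needs only $\le\kappa$-closure plus $\kappa$-closure of the target model, and a measure ultrapower supplies exactly that.

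\emph{Option 2: stage $\kappa$ is $\mathrm{Add}(\kappa,\kappa^{++})$.} With a matching preparation (Cohen subsets of the inaccessibles $\gamma$ themselves), this is the kind of iteration Schanker uses to make GCH fail first at a weakly measurable cardinal. That is, it \emph{preserves} weak measurability. Raising $2^\kappa$ only separates weak measurability from measurability in the direction that lets the former survive.

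Finally, your ``density argument'' and ``nowhere guessable coding sequence'' are not arguments. You never identify a family of $\kappa^+$ sets that no filter measures, nor show that no embedding exists (as opposed to one particular construction failing).

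The strong Ramseyness half also breaks at the decisive point. Weak amenability of the lifted ultrafilter needs the lift $j:M[G]\to N[j(G)]$ to be $\kappa$-powerset preserving. So any stage of $j(\mathbb{P})$ that adds subsets of $\kappa$ must be met by a generic already in $M[G]$, used as a master condition. It cannot be produced by diagonalization, since a diagonalized generic is a subset of $\kappa$ outside $M[G]$.

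With your mismatched iteration (subsets of $\gamma^+$ below $\kappa$, subsets of $\kappa$ at $\kappa$) this is impossible. Stage $\kappa$ of $j(\mathbb{P}_\kappa)$ adds no subset of $\kappa$, so $N[j(G_\kappa)]$ has exactly the subsets of $\kappa$ that $M[G_\kappa]$ has. Let $h$ be the stage-$\kappa$ Cohen generic. Lifting through it would make each new Cohen subset $h_x$ of $\kappa$ equal to $j(h)_{j(x)}\upharpoonright\kappa$. That puts $h_x$ into an extension of $N[j(G_\kappa)]$ by a $<j(\kappa)$-closed forcing, hence into $N[j(G_\kappa)]$ itself, whereas $h_x\notin M[G_\kappa]$. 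Relatedly, you only lift through $\mathbb{P}_\kappa$, although $A$ may be added at stage $\kappa$. (Also, $N^{<\kappa}\subseteq N$ gives only $<\kappa$-closure in $V[G]$, not $<\kappa^+$-closure; this is enough for $\kappa$ many dense sets, though.)

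What is missing is a concrete obstruction that every normal measure on $\kappa$ must meet but that $\kappa$-powerset preserving embeddings of $\kappa$-models can avoid. Under $2^\kappa=\kappa^+$, weak measurability is just measurability, so the natural route is to keep GCH at $\kappa$ and kill measurability. One way is a club-shooting iteration as in this paper, with $c\cup\{\kappa\}$ as master condition. That requires first finding a property that every normal measure on $\kappa$ concentrates on but which the strongly Ramsey witnesses can be chosen to lack at $\kappa$.
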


\begin{theorem}[Gitman]  If $\kappa$ is strongly Ramsey and ineffable, then there is a forcing extension where $\kappa$ is not ineffable, but is still strongly Ramsey.
\end{theorem}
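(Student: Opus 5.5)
The plan is to destroy ineffability by generically adding a counterexample to the list characterization of ineffability, while arranging the forcing to be mild enough that the embeddings witnessing strong Ramseyness lift. The characterization: $\kappa$ is ineffable iff every $\kappa$-list $\langle A_\alpha \mid \alpha<\kappa\rangle$ with $A_\alpha\subseteq\alpha$ has a stationary coherent branch. I would use an Easton-support iteration $\mathbb{P}_{\kappa+1}=\mathbb{P}_\kappa*\dot{\mathbb{Q}}_\kappa$, nontrivial only at inaccessible stages $\gamma\le\kappa$. At each such stage, $\mathbb{Q}_\gamma$ adds by initial segments a pair $(\vec A,C)$. Here $\vec A$ is a $\gamma$-list and $C\subseteq\gamma$ is a club. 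A condition is a bounded segment $\langle A_\alpha\mid\alpha<\beta\rangle$ together with a closed bounded set $c\subseteq\beta+1$, and the requirement is that for $\alpha<\alpha'$ in $c$ we have $A_{\alpha'}\cap\alpha\neq A_\alpha$.

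Then in the extension $\vec A$ has no coherent branch on a set meeting $C$ in more than one point. Hence no stationary coherent branch exists, and $\kappa$ is not ineffable. The first steps are routine verifications:
\begin{enumerate}
\item each $\mathbb{Q}_\gamma$ is ${<}\gamma$-strategically closed (extend the segment, then add a new top point to $c$ at limits, choosing the new list entry to differ from the relevant predecessors);
\item hence the iteration preserves cardinals and cofinalities and adds no new $<\kappa$-sequences at the final stage over $V^{\mathbb{P}_\kappa}$;
\item $\mathbb{P}_\kappa$ is $\kappa$-c.c. and of size $\kappa$.
\end{enumerate}

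For preservation of strong Ramseyness, fix $A\subseteq\kappa$ in the extension, with a $\mathbb{P}_{\kappa+1}$-name for it. In $V$, take a $\kappa$-model $M$ (transitive, of size $\kappa$, $M^{<\kappa}\subseteq M$) containing the name and $\mathbb{P}_{\kappa+1}$. Let $U$ be a weakly amenable $M$-ultrafilter with ultrapower $j:M\to N$. I would lift $j$ to $M[G*g]$ as follows.
\begin{enumerate}
\item Factor $j(\mathbb{P}_\kappa)=\mathbb{P}_\kappa*\mathbb{Q}_\kappa*\mathbb{P}_{\text{tail}}$, using $g$ itself for the stage-$\kappa$ coordinate.
\item Build an $N[G*g]$-generic for the tail in $V[G*g]$ by diagonalizing against the at most $\kappa$ many dense sets of $N[G*g]$. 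This uses that the tail is ${\le}\kappa$-strategically closed in $N[G*g]$ and that $N[G*g]$ is closed under $<\kappa$-sequences, since $M$ is closed and the forcing is distributive enough.
\item For $j(\mathbb{Q}_\kappa)$, use a master condition: the union of $g$, extended at coordinate $\kappa$ by a set $A_\kappa\subseteq\kappa$ differing from $A_\alpha$ end-extension-wise for the $\alpha\in C$, with $c=C\cup\{\kappa\}$.
\item Below this master condition, again build a generic by diagonalization.
\end{enumerate}
The lifted ultrafilter $U'=\{X\subseteq\kappa \mid X\in M[G*g],\ \kappa\in j(X)\}$ is then an $M[G*g]$-ultrafilter containing $A$.

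The main obstacle I expect is twofold. First, checking that the master condition is legitimate: the choice of $A_\kappa$ must defeat coherence with every earlier point of $C$, and this is exactly where a diagonal choice is needed. Second, and harder, verifying weak amenability of $U'$ together with the $<\kappa$-closure of the new model. For weak amenability I would try to show that $U'\cap M[G*g]$-small families are determined by $U$ (which is weakly amenable) together with the generics. This requires that the generics be constructed so that their initial segments, restricted to any $\kappa$-sized family from $M[G*g]$, lie in $M[G*g]$. I would achieve this by building the tail generics via a sequence of conditions each of which is definable from $U$-fragments lying in $M$, following Gitman's lifting technique for strongly Ramsey cardinals.
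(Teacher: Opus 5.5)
\textbf{The master condition in your step 3 cannot exist.} The paper only cites this theorem and gives no proof, so I am judging your argument on its own terms.

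Any condition of $j(\mathbb{Q}_\kappa)$ lying below $j(p)=p$ for all $p\in g$ has a club part that end-extends every $c_p$. So it contains $C$, and, being closed, it contains $\kappa$. Its $\kappa$-th entry $A_\kappa$ must therefore satisfy $A_\kappa\cap\alpha\neq A_\alpha$ for every $\alpha\in C$. It must also lie in $N[j(G_\kappa)]\subseteq V[G_\kappa][g]$; weak amenability would in any case put it in $M[G*g]$.

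Genericity of $g$ rules out every such set. Fix a $\mathbb{Q}_\kappa$-name $\dot B\in V[G_\kappa]$ for a subset of $\kappa$ and a condition $p$. Using the ${<}\kappa$-strategic closure you claim in step 1, build $p=p_0\geq p_1\geq\cdots$ with $p_{n+1}$ deciding $\dot B\cap\mathrm{len}(p_n)$. Let $\beta^*=\sup_n\mathrm{len}(p_n)$ and let $b$ be the decided value of $\dot B\cap\beta^*$.
\begin{itemize}
\item If $b\cap\alpha=a_\alpha$ for some $\alpha\in\bigcup_n c_{p_n}$, then some $p_n$ already forces that $\dot B$ coheres with $\dot{\vec A}$ at a point of $\dot C$.
\item Otherwise $b$ is a legal value for the $\beta^*$-th entry. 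The union of the $p_n$, with $a_{\beta^*}=b$ and $\beta^*$ added to the club part, is then a condition forcing the same thing.
\end{itemize}
Hence every subset of $\kappa$ in $V[G*g]$ coheres with $\vec A$ at some point of $C$. Equivalently, every cofinal branch of $2^{<\kappa}$ passes through some $A_\alpha$ with $\alpha\in C$, and the ``diagonal choice'' of $A_\kappa$ you anticipated has nothing to choose from.

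\textbf{This cannot be fixed by a cleverer diagonalization.} The same computation shows that $\kappa$ is not even weakly compact in $V[G*g]$. Let $M'\ni\vec A,C$ be a transitive model of size $\kappa$ and $j':M'\to N'$ an elementary embedding with critical point $\kappa$, taken in $V[G*g]$. Then $\kappa\in j'(C)$, since $j'(C)\cap\kappa=C$ is unbounded in $\kappa$ and $j'(C)$ is closed. Elementarity makes $j'(\vec A)(\kappa)\in V[G*g]$ a subset of $\kappa$ that fails to cohere with $\vec A$ at every point of $C$. Equivalently, the tree of $s\in 2^{<\kappa}$ extending no $A_\alpha$ with $\alpha\in C$ is a $\kappa$-Aronszajn tree.

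So your $\mathbb{Q}_\kappa$ overshoots: it destroys strong Ramseyness together with ineffability. Any repair must make every coherence set $\{\alpha: A_\alpha=B\cap\alpha\}$ nonstationary without making the added antichain maximal. In particular, if you keep a club of pairwise non-coherent points, subsets of $\kappa$ avoiding coherence on it must survive into $M[G*g]$ so that $j(\vec A)(\kappa)$ has somewhere to go. Your proposal has no mechanism for this, and that missing idea is the real content of the theorem.

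By contrast, the weak amenability you flagged as the harder obstacle is essentially automatic once a lift exists. The tail and $j(\mathbb{Q}_\kappa)$ add no subsets of $\kappa$ over $N[G_\kappa][g]$, and $P(\kappa)^N=P(\kappa)^M$. A minor point: as written, $\mathbb{Q}_\gamma$ has dead conditions. For example, $0\in c$ forces $C=\{0\}$, so the claim that $C$ is club needs a restriction to a dense set.
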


\begin{theorem}[Hamkins/Shelah]  If $\kappa$ is $\theta$-supercompact where $\kappa < \theta$ and $\theta^{< \kappa} = \theta$, then there is a forcing extension where $\kappa$ is $\theta$-supercompact but not $\theta^{+}$-supercompact.
\end{theorem}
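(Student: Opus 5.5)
The plan is to force over $V$ with a poset of size $\theta^{+}$ that adds a non-reflecting stationary set of cofinality-$\omega$ ordinals in $\theta^{+}$. Such a set kills $\theta^{+}$-supercompactness outright, since $\theta^{+}$-supercompactness of $\kappa$ implies that every stationary subset of $\theta^{+}\cap\mathrm{Cof}(\omega)$ reflects. The remaining work is to show that $\theta$-supercompactness survives, which I will do by a lifting argument.

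First I will prepare the ground model if needed, for instance with a Laver-style preparation, or simply by working with a normal fine measure on $P_\kappa\theta$. Let $j:V\to M$ be the $\theta$-supercompactness embedding, with $M^\theta\subseteq M$ and $j''\theta\in M$. The forcing is $\mathbb{P}$, which adds by initial segments a non-reflecting stationary subset $S\subseteq\theta^{+}\cap\mathrm{Cof}(\omega)$. A condition is a bounded subset $p$ of $\theta^{+}\cap\mathrm{Cof}(\omega)$ such that $p\cap\alpha$ is nonstationary in $\alpha$ for every $\alpha$ of uncountable cofinality up to $\sup p$. This poset is ${\le}\theta$-strategically closed, so it adds no new $\theta$-sequences. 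It also has size $\theta^{+}$, using $\theta^{<\kappa}=\theta$ together with the cardinal arithmetic at $\theta$, or a GCH assumption at $\theta$. The standard argument shows that the generic $S$ is stationary and non-reflecting in $V[G]$. Hence $\kappa$ is not $\theta^{+}$-supercompact there, since the reflection argument at $\sup j''\theta^{+}<j(\theta^{+})$ would force $S$ to reflect.

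To preserve $\theta$-supercompactness I will lift $j$ to $j:V[G]\to M[j(G)]$. Because $\mathbb{P}$ adds no new subsets of $P_\kappa\theta$, it would suffice to show that the original normal measure $\mu$ still generates a normal fine measure. That measure concentrates on ground-model sets, but the forcing adds no $\theta$-sequences. So every subset of $P_\kappa\theta$ in $V[G]$ lies in $V$, and $\mu$ remains an ultrafilter that is fine and normal. Normality needs care: regressive functions on $P_\kappa\theta$ are functions from a set of size $\theta^{<\kappa}=\theta$ into $\theta$. By the distributivity, each such function lies in $V$, so normality is inherited directly from $V$. Closure of the ultrafilter under ${<}\kappa$ intersections also holds, since sequences of length less than $\kappa$ are in $V$.

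The main obstacle is verifying that $\mathbb{P}$ is ${\le}\theta$-distributive while still producing a non-reflecting stationary set, together with checking that $P(P_\kappa\theta)$ is unchanged. This uses $\theta^{<\kappa}=\theta$ so that $|P_\kappa\theta|=\theta$. For the distributivity I would first try the usual strategy in which player II plays closed sets club-avoiding the top point: at limit stages of cofinality at most $\theta$, II adds the supremum as a point outside $S$, which keeps every initial segment nonstationary. Once that is in place, preservation of $\theta$-supercompactness is immediate, and no lifting through $j(\mathbb{P})$ is needed.
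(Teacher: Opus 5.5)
Your argument is correct, but it takes a different route from the paper. The paper gives no proof where this theorem is stated. It recovers the result as Corollary 14, from Theorem 13 applied with $\theta^+$ in place of $\theta$, by forcing with $\mbox{Add}(\omega,1)*\mbox{Add}(\theta^+,1)$:
\begin{itemize}
\item the Cohen real is small forcing, so it preserves $\theta$-supercompactness;
\item the second factor is ${\le}\theta$-closed, so (using $\theta^{<\kappa}=\theta$) it adds no subsets of $P_\kappa\theta$, and the normal fine measures survive;
\item $\theta^+$-supercompactness is destroyed by citing the Hamkins--Shelah superdestructibility theorem: after small forcing, a ${<}\kappa$-closed forcing that adds a subset of $\theta^+$ kills it.
\end{itemize}

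Your preservation step is exactly the mechanism the paper uses for that second factor. So the real difference is the killing step. You replace the superdestructibility black box with two ingredients: the classical reflection argument at $\sup j''\theta^+<j(\theta^+)$, and the standard ${\le}\theta$-strategically closed forcing that adds a non-reflecting stationary subset of $\theta^+\cap\mathrm{Cof}(\omega)$.

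Your route is self-contained and adds no reals, indeed no new subsets of $\theta$. Since the reflection argument only uses the discontinuity of $j$ at $\theta^+$, it also kills $\theta^+$-strong compactness. The paper's route needs no specific combinatorial forcing and no strategic-closure check: after the Cohen real, any sufficiently closed forcing adding a subset of $\theta^+$ works. It is also the same template the paper reuses for its strong-compactness and hugeness variants. Its cost is dependence on a much deeper theorem.

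Three of your preparatory remarks play no role: the Laver-style preparation, the abandoned plan to lift $j$ through $j(\mathbb{P})$, and the GCH-based bound $|\mathbb{P}|=\theta^+$. Only ${\le}\theta$-distributivity is used, and it also keeps $\theta^+$ a cardinal.

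For the strategic closure, state the strategy precisely: player II never lets the suprema of earlier moves enter later conditions. Then at a limit stage of uncountable cofinality, those suprema form a club in the limit that is disjoint from the union.
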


\begin{theorem}[Magidor]  If $\kappa$ is strongly compact, then there is a forcing extension where $\kappa$ is strongly compact, but not supercompact. 
\end{theorem}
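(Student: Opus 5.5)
The plan is to follow Magidor's original strategy. Start with $\kappa$ supercompact, and first force if necessary so that there is a Laver-type preparation available. Then perform an Easton-support iteration $\mathbb{P}$ of length $\kappa$. At each measurable cardinal $\alpha<\kappa$, the iteration adds a non-reflecting stationary subset of $\alpha$ (consisting of cofinality-$\omega$ ordinals), using the standard forcing by initial segments. This forcing is $<\alpha$-strategically closed.

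In $V[G]$, every measurable $\alpha<\kappa$ of $V$ carries a non-reflecting stationary set and so is no longer measurable. Since $\mathbb{P}$ is mild below each stage, no new measurables are created. Hence in $V[G]$ there are no measurable cardinals below $\kappa$. On the other hand, if $\kappa$ were $2^\kappa$-supercompact in $V[G]$, then by reflection the set of measurables below $\kappa$ would lie in a normal measure, so it would be unbounded. This contradiction shows that $\kappa$ is not supercompact in $V[G]$.

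The main work is showing that $\kappa$ remains strongly compact. Fix $\lambda\ge\kappa$ regular, and aim to build a $\kappa$-complete fine measure on $P_\kappa\lambda$ in $V[G]$. In $V$, take $j_1:V\to M$ a $\lambda$-supercompactness embedding with $\kappa$ not measurable in $M$, namely one whose normal measure has minimal Mitchell order. Compose it with $j_2:M\to N$, the ultrapower of $M$ by a normal measure on $\kappa$ in $M$... Here I need to adjust, since $\kappa$ not measurable in $M$ blocks that step. The correct form is Magidor's: let $j_1$ be $\lambda$-supercompact with $\kappa$ measurable in $M$, let $j_2$ be the ultrapower of $M$ by a measure on $j_1(\kappa)$... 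Rather than fix this now, I would follow Apter–Cummings style: let $j=j_2\circ j_1$, where $j_1$ witnesses $\lambda$-supercompactness and $j_2$ is an ultrapower by a normal measure on $\kappa$ applied so that in the target model $\kappa$ is \emph{not} measurable. Then $j(\mathbb{P})$ does no forcing at stage $\kappa$, and the critical point moves to a point where a master condition can be found.

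Concretely, the steps are:
\begin{enumerate}
\item Factor $j(\mathbb{P})=\mathbb{P}*\dot{\mathbb{Q}}_{tail}$, where there is no nontrivial stage at $\kappa$ because $\kappa$ is nonmeasurable in the target.
\item Use the closure of the tail in $N[G]$, together with the counting of dense sets in $V[G]$ (using $2^\lambda$ bounds), to build an $N[G]$-generic $H$ for the tail. This lets $j$ lift to $j:V[G]\to N[G][H]$.
\item Take $j''\lambda\in N$, covered by a set of size $<j(\kappa)$, which is guaranteed because $j_1$ is $\lambda$-supercompact and $j_2$ has small critical point. Then derive the fine $\kappa$-complete measure from a covering set $D\in N[G][H]$ with $j''\lambda\subseteq D$ and $|D|<j(\kappa)$.
\end{enumerate}

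The main obstacle is step 2: constructing the tail generic when closure of the tail forcing in $N$ is only $<\kappa^+$-ish relative to the needed number of dense sets, since $j_2$ ruins $\lambda$-closure. I would try first to handle this via the $j_2$-factor genericity trick. Build the generic over $M$ for $j_1(\mathbb{P})$ using $\lambda$-closure of $M$. Then transfer it through $j_2$, which is an ultrapower by a measure on a small cardinal, using that the image of a generic under such $j_2$ generates a generic, as in the standard Magidor/Apter–Cummings lifting.
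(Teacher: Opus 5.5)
Your top-level plan matches the paper's, which only cites Magidor: make $\kappa$ the least measurable while keeping it strongly compact, and note that the least measurable cannot be supercompact. Your non-supercompactness paragraph is fine. Since the hypothesis is only strong compactness, also say that if $\kappa$ is not already supercompact the trivial forcing works, so you may assume it is. The Laver preparation plays no role.

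The genuine gap is step 2, and the repair you sketch does not work. In $j=j_2\circ j_1$, $\kappa$ is measurable in $M$; that is exactly what makes $j_2$ available. So stage $\kappa$ of $j_1(\mathbb{P})$ is the nontrivial forcing $\mathbb{A}_\kappa$, which adds a non-reflecting stationary $S\subseteq\kappa$.
\begin{itemize}
\item You cannot build in $V[G]$ an $M[G]$-generic for $j_1(\mathbb{P})$ past stage $\kappa$. $M[G]$ has every subset of $\kappa$ that $V[G]$ has, hence every dense subset of $\mathbb{A}_\kappa$, so an $M[G]$-generic filter would be $V[G]$-generic.
\item The transfer through $j_2$ fails at this coordinate even in principle. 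Pushing a generic forward along $j_2''$ only works for forcings that are $\le\kappa$-distributive in $M[G]$, so that the $\kappa$ dense sets $f(\alpha)$ representing a dense set $j_2(f)(\kappa)$ can be intersected. But $\mathbb{A}_\kappa$ adds a subset of $\kappa$.
\item Concretely, if $K$ is generic for $\mathbb{A}_\kappa$ with $\bigcup K=S$, then $\bigcup j_2''K=S$. Any condition of $\mathbb{A}_{j_2(\kappa)}$ with support above $\kappa$ compatible with all of $j_2''K$ would end-extend $S$. It would therefore reflect at $\kappa$, so $j_2''K$ is contained in no generic filter at all.
\item There is a second obstruction of the same kind. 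Under GCH every $V$-measurable $\delta\in(\kappa,\lambda)$ is measurable in $M$. The stage $\mathbb{A}_\delta$ of $j_1(\mathbb{P})$ is then a forcing of size $\delta<\lambda$ over a $\lambda$-closed model, so the same argument shows $\lambda$-closure of $M$ cannot produce a generic there either.
\end{itemize}

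The missing idea is to split off stage $\kappa$, for example with term forcing.
\begin{itemize}
\item Arrange that the part $\dot{\mathbb{R}}$ of $j_1(\mathbb{P})$ above $\kappa$ is forced to be $\le\lambda$-strategically closed. For instance, require no measurables in $(\kappa,\lambda]$, which you must secure separately, say by working where there are none above $\kappa$.
\item Let $\mathbb{T}\in M[G]$ be the term forcing for $\dot{\mathbb{R}}$ over $\mathbb{A}_\kappa$. It is $\le\lambda$-strategically closed and has $\lambda^+$ dense sets, so you can build $T\in V[G]$ generic over $M[G]$.
\item Because $\mathbb{T}$ is $\le\kappa$-distributive, $j_2''T$ legitimately generates an $N[G][H_1]$-generic $T^*$ for $j_2(\mathbb{T})$.
\item Build $H_1$ (for the stages of $j(\mathbb{P})$ in $(\kappa,j_2(\kappa))$) and $S^*$ (generic for $\mathbb{A}_{j_2(\kappa)}$ over $N[G][H_1][T^*]$) directly. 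Each has only $\kappa^+$ dense sets, and these models are closed under $\kappa$-sequences.
\item Then $\{\tau^{S^*}:\tau\in T^*\}$ generates the generic for $j_2(\dot{\mathbb{R}})$. Since $j''G=G$, no master condition is needed.
\end{itemize}
Finally, in step 3, $j''\lambda$ is not in $N$. What you actually use is $j''\lambda\subseteq j_2(j_1''\lambda)\in N$, which has $N$-cardinality $j_2(\lambda)<j(\kappa)$.
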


\vspace{.5 in}

\begin{center}\textbf{MEASURABLE CARDINALS} \end{center}
\vspace{.2 in}

A cardinal $\kappa$ is \textit{measurable} if and only if $\kappa$ is uncountable and there is a $\kappa$-complete nonprincipal ultrafilter on $\kappa$. Suppose $\kappa$ is measurable by measure $\mu$. Then, the corresponding ultrapower embedding $j:V \to M_{\mu}$, by measure $\mu$ is an elementary embedding with critical point $\kappa$. Also, if $\kappa$ is the critical point of a non-trivial elementary embedding, then it is measurable. Thus, another characterization of measurable cardinals is the following embedding definition: $\kappa$ is measurable if and only if it is the critical point of an elementary embedding $j:V \to M$ in $V$. 

When we begin to ask whether $\kappa$ is measurable in $M$, we enter the realm of Mitchell order. The theorems in this section are about reducing Mitchell rank for measurable cardinals in a forcing extension. The Mitchell order is defined on normal measures; for normal measures $\mu$ and $\nu$, the relation $\mu \vartriangleleft \nu$ holds when $\mu \in M_{\nu}$, where $M_{\nu}$ is the ultrapower by $\nu$. If $\kappa$ is measurable, then the Mitchell order on the measures on $\kappa$ is well-founded (and transitive) [Jech p. 358]. Thus, let $m(\kappa)$ be the collection of normal measures on $\kappa$. The definition of rank for $\mu \in m(\kappa)$ with respect to $\vartriangleleft$ is $o(\mu) = \sup \{o(\nu) + 1 \ | \ \nu \vartriangleleft \mu\}$. The Mitchell order on $\kappa$, denoted $o(\kappa)$, is the height of the Mitchell order on $m(\kappa)$, thus $o(\kappa) = \sup\{o(\mu) + 1 \ | \ \mu \in m(\kappa)\}$. If $\kappa$ is not measurable, then $o(\kappa) = 0$, since $m(\kappa)$ is empty. A cardinal $\kappa$ is measurable if and only if $o(\kappa) \ge 1.$ For $o(\kappa) \ge 2$, there is a measure $\mu$ on $\kappa$, such that if $j:V \to M_{\mu}$ is the corresponding ultrapower embedding, there is a measure on $\kappa$ in $M_{\mu}$. Thus $\kappa$ is measurable in $M_{\mu}$.  Thus $M_{\mu} \models \kappa \in j(\{ \gamma < \kappa \ | \ \gamma \mbox{ is measurable} \} )$, and so the set $\{ \gamma < \kappa \ | \ \gamma \mbox{ is measurable } \}$ is in $\mu$. That is, $\mu$ concentrates on measurable cardinals. Therefore, $o(\kappa) \ge 2$ if and only if there is a normal measure on $\kappa$ which concentrates on measurable cardinals. A cardinal $\kappa$ has Mitchell order $o(\kappa) \ge 3$, if and only if there is a normal measure on $\kappa$ which concentrates on measurable cardinals with Mitchell order 2. A cardinal $\kappa$ has $o(\kappa) \ge \alpha$ if and only if for every $\beta < \alpha,$ there is a normal measure on $\kappa$ which concentrates on measurable cardinals of order $\beta$.

The following theorem shows to force any measurable cardinal, of any Mitchell rank, to have order exactly one. In the proof of the following theorem, and the proof of the main theorems, we will need that the forcing notions do not create large cardinals. In [2] since being Mahlo or inaccessible is downwards absolute, we knew that the forcing could not create new Mahlo or inaccessible cardinals. However, it is possible that forcing creates new large cardinals in the extension. Thankfully, we have the approximation and cover properties between models and so by Hamkins [11] we can ensure that no new measurable cardinals are created. 
\begin{theorem} If $\kappa$ is a measurable cardinal, then there is a forcing extension where $o(\kappa) = 1$. In addition, the forcing will preserve all measurable cardinals and will not create any new measurable cardinals.\\
\end{theorem}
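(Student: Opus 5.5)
Our plan is to first make $\kappa$'s measurability indestructible in a controlled way, and then force to kill the measurability of every measurable cardinal below $\kappa$ while keeping $\kappa$ measurable. Concretely, we take a reverse Easton iteration $\mathbb{P}$ of length $\kappa+1$ which, at each measurable cardinal $\gamma \le \kappa$ (measurable in the ground model $V$), adds a Cohen subset of $\gamma$ (or, more precisely, forces with a poset that destroys the measurability of $\gamma$ while being $<\gamma$-directed closed, e.g.\ adding a nonreflecting stationary subset of $\gamma$ of cofinality-$\omega$ ordinals, or adding a Cohen subset and then shooting a club avoiding the measurable-relevant points). Restricted to stages $\gamma<\kappa$, call this $\mathbb{P}_\kappa$; at stage $\kappa$ we force with a trivial or Cohen forcing $\mathrm{Add}(\kappa,1)$ only to facilitate the lifting. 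The goal is: in $V^{\mathbb{P}}$ no $\gamma<\kappa$ is measurable, yet $\kappa$ is. Then any normal measure $\mu$ on $\kappa$ in the extension gives an ultrapower $M_\mu$ in which $\kappa$ is not measurable (since $\{\gamma<\kappa \mid \gamma \text{ measurable}\}=\emptyset \notin \mu$), so $o(\kappa)=1$ by the characterization of $o(\kappa)\ge 2$ given just above.

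The key steps, in order: (1) Show that each $\gamma<\kappa$ which was measurable in $V$ becomes non-measurable after stage $\gamma$, and that the tail forcing beyond $\gamma$ is $\le\gamma$-strategically closed (indeed closed enough) so it cannot resurrect a measure on $\gamma$. (2) Show no new measurable cardinals appear: $\mathbb{P}$ factors at each relevant stage as small forcing followed by sufficiently closed forcing, so the extension $V\subseteq V[G]$ has the $\delta$-approximation and $\delta$-cover properties, and by Hamkins [11] every measurable cardinal of $V[G]$ above $\delta$ was measurable in $V$; non-measurables below are handled by the usual gap argument at small $\delta$. (3) Show $\kappa$ remains measurable: take a normal measure $U$ on $\kappa$ in $V$ with $o(U)=0$, i.e.\ $\kappa$ not measurable in $M=M_U$. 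Then $j(\mathbb{P}_\kappa)$ in $M$ agrees with $\mathbb{P}_\kappa$ up to $\kappa$, and since $\kappa$ is not measurable in $M$, stage $\kappa$ of $j(\mathbb{P}_\kappa)$ is trivial; the tail $\mathbb{P}_{tail}$ from $\kappa+1$ to $j(\kappa)$ is $\le\kappa$-closed in $M[G_\kappa]$ and has at most $\kappa^+$ dense sets in $M[G_\kappa]$ counted in $V$, so we build an $M[G_\kappa]$-generic by diagonalization and lift $j$ to $j:V[G_\kappa]\to M[j(G_\kappa)]$. If a stage-$\kappa$ Cohen set is added, we use the standard master-condition trick (the tail at $j(\kappa)$ contains $j''g$ as a condition by directed closure) to lift further. (4) Preservation of other measurables above $\kappa$ is by small-forcing (Lévy–Solovay), and those $>\kappa$ untouched.

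The main obstacle is the statement ``preserve all measurable cardinals'': the plan above kills the measurables below $\kappa$, conflicting with it. So I would reinterpret and instead kill only the \emph{higher-order} structure: at each $\gamma\le\kappa$ with $o(\gamma)\ge 2$ force to make every measure on $\gamma$ have order $0$, i.e.\ using the same lifting argument but choosing, at stage $\gamma$, a forcing that destroys measurability of $\gamma$ only for measures $U$ with $o(U)>0$ — equivalently, arrange by induction that in the extension every normal measure on $\gamma$ extends some ground $o(U)=0$ measure and concentrates on non-measurables. The hardest verification is showing that in the extension \emph{every} normal measure on $\kappa$ has order $0$: I would argue that any normal measure $\mu$ in $V[G]$ restricts to a ground measure via approximation/cover (Hamkins), and that its ultrapower sees the stage-$\kappa$ forcing, which adds a generic for $j(\mathbb{P})$ killing measurability of $\kappa$ in $M_\mu$. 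This step is where I expect the real work.
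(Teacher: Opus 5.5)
You have a genuine gap. Your first plan establishes $o(\kappa)=1$, but it violates the second clause of the statement. Killing measurability at every ground-model measurable $\gamma<\kappa$ and lifting an order-$0$ ultrapower (with stage $\kappa$ trivial) does give $o(\kappa)=1$. It does so only by making $\kappa$ the least measurable cardinal, so ``preserves all measurable cardinals'' fails.

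The $\mathrm{Add}(\kappa,1)$-at-stage-$\kappa$ option also does not lift as written. If stage $\kappa$ of $j(\mathbb{P}_\kappa)$ is trivial, then $g\notin M[j(G_\kappa)]$, since the tail adds no subsets of $\kappa$. So $\bigcup g$ is not available as a master condition.

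Your repair never names a forcing. ``A forcing that destroys measurability of $\gamma$ only for measures with $o(U)>0$'' is exactly what has to be constructed. The upper-bound mechanism you sketch is also incompatible with preservation. Suppose stage $\kappa$ of $j(\mathbb{P})$ kills the measurability of $\kappa$ in $M_\mu$ whenever $\kappa$ is measurable there. Then the defining rule must kill every ground-model measurable. By elementarity, $\mathbb{P}$ then kills every measurable $\gamma<\kappa$.

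The missing idea, which is the paper's, is to destroy nothing and instead make the measurables \emph{null}. At each inaccessible $\gamma\le\kappa$, shoot a club $c_\gamma\subseteq\gamma$ of cardinals that are not measurable in the ground model. Use closed bounded sets ordered by end-extension; this poset has $\le\beta$-closed dense subsets for all $\beta<\gamma$ (Lemma 7).

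For the lower bound, take $j:V\to M$ by a normal measure of Mitchell rank $0$, so $\kappa$ is not measurable in $M$. Then $j(\mathbb{P})$ agrees with $\mathbb{P}*\dot{\mathbb{Q}}$ through stage $\kappa$. The tail past $\kappa$ is handled by diagonalization over $M[G][g]$, essentially as in your step (3). The key point is that $c\cup\{\kappa\}$ is a master condition for $j(\mathbb{Q})$:
\begin{itemize}
\item points of $c$ are non-measurable in $M$, since $M$ has the same $P(P(\delta))$ as $V$;
\item hence they are non-measurable in $M[j(G)]$, by Hamkins's approximation and cover results;
\item $\kappa$ is non-measurable in $M[j(G)]$ by the choice of $j$.
\end{itemize}

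For the upper bound, $c$ is club in $\kappa$, so it lies in every normal measure on $\kappa$ in the extension. Since no new measurables are created, $c$ contains no measurable cardinal of the extension. Hence no normal measure concentrates on measurables, and $o(\kappa)\le 1$.

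Preservation then holds as follows. Measurables $\gamma<\kappa$ survive by the same lifting at stage $\gamma$, because the remaining forcing has a $\le\gamma$-closed dense subset. Measurables above $\kappa$ survive by L\'evy--Solovay. The measurables below $\kappa$ simply end up with order $1$ rather than being destroyed.
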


\begin{proof} Suppose $\kappa$ is a measurable cardinal. Assume that $2^{\kappa} = \kappa^+$, or force it to be true since forcing to have the GCH hold at $\kappa$ preserves the measurability of $\kappa$. Pick $j:V \to M$, an elementary ultrapower embedding by $\mu$, a normal measure on $\kappa$ which has minimal Mitchell rank $o(\mu) = 0$, with $cp(j) = \kappa$, such that $M \models \kappa$ is not measurable. Let $\mathbb{P}$ be a $\kappa$-length iteration, with Easton support, of $\mathbb{Q}_{\gamma}$, a $\mathbb{P}_{\gamma}$-name for the forcing to add a club to $\gamma$ of cardinals which are not measurable in $V$, whenever $\gamma < \kappa$ is inaccessible, otherwise force trivially at stage $\gamma$. Let $G \subseteq \mathbb{P}$ be $V$-generic, let $\dot{\mathbb{Q}}_G = \mathbb{Q}$, and let $\dot{\mathbb{Q}}$ be a $\mathbb{P}$-name for a forcing to add a club of cardinals to $\kappa$ which are not measurable in $V[G]$. Let $g \subseteq \mathbb{Q}$ be $V[G]$-generic. Let $\delta_0$ be the first inaccessible cardinal. This is the first non-trivial stage of $\mathbb{P}$, and at this stage a club is added to $\delta_0$. Thus the forcing up to and including stage $\delta_0$ has cardinality at most $\delta_0$ and the forcing after stage $\delta_0$ is closed up to the next inaccessible cardinal. Thus $\mathbb{P}$ has a closure point at $\delta_0$. Thus by Corollary 22 in Hamkins [11], the forcing does not create measurable cardinals.

We want to lift $j$ through $\mathbb{P}$ and $\mathbb{Q}$ to find $j:V[G][g] \to M[j(G)][j(g)]$ which witnesses that $\kappa$ is measurable in $V[G][g]$. The main task is to find $j(G)$, and $M$-generic filter for $j(\mathbb{P})$, and $j(g)$ an $M[j(G)]$-generic filter for $j(\mathbb{Q})$. Since the critical point of $j$ is $\kappa$, the forcings $\mathbb{P}$ and $j(\mathbb{P})$ are isomorphic up to stage $\kappa$. Also, since $\dot{\mathbb{Q}}$ is a $\mathbb{P}$-name for a forcing which adds a club of ground model non-measurable cardinals to $\kappa$ and $V[G]$ and $M[G]$ have the same bounded subsets of $\kappa$, the forcing at stage $\kappa$ of $j(\mathbb{P})$ is $\dot{\mathbb{Q}}$.  Thus, $j(\mathbb{P})$ factors as $\mathbb{P}\cdot\dot{\mathbb{Q}}\cdot\mathbb{P}_{\mbox{tail}}$, where $\mathbb{P}_{\mbox{tail}}$ is the forcing $j(\mathbb{P})$ past stage $\kappa$. Since $G$ is $V$-generic, it is also $M$-generic. Thus, form the structure $M[G]$. Similarly, since $g$ is $V[G]$-generic, it follows that $g$ is $M[G]$-generic, so construct the structure $M[G][g]$. Since both $\mathbb{P}$ and $\mathbb{Q}$ have size $\kappa$, they both have the $\kappa^+$-chain condition, so the iteration $\mathbb{P}*\mathbb{Q}$ has the $\kappa^+$-chain condition. Also, since $j''\kappa = \kappa \in M$, and $j$ is an ultrapower embedding, it follows that $M^{\kappa} \subseteq M$ in $V$. Thus, since $G*g$ is $V$-generic, $M[G][g]^{\kappa} \subseteq M[G][g]$ by Hamkins [8 (Theorem 54)]. 

The goal is to diagonalize to find and $M[G][g]$-generic filter for $\mathbb{P}_{\mbox{tail}}$ in $V[G][g]$, and so far we have satisfied one criterion which is that $M[G][g]$ is closed under $\kappa$ sequences in $V[G][g]$. Next, since $|\mathbb{P}| = \kappa$ and $2^{\kappa} = \kappa^+$, it follows that $\mathbb{P}$ has at most $\kappa^+$ many dense sets, and so does any tail $\mathbb{P}_{[\beta, \kappa)}$. Thus, $\mathbb{P}_{\mbox{tail}}$ has at most $|j(\kappa^+)|^{V} \le \kappa^{+^{\kappa}} = \kappa^+$ dense subsets in $M[G][g]$. Finally, since for every $\beta < \kappa$, there is, in $M[G][g]$, a dense subset of $\mathbb{P}$ which is $\le \beta$-closed, there is a dense subset of $\mathbb{P}_{\mbox{tail}}$ which is $\le \kappa$-closed. Thus, it is possible to diagonalize to get an $M[G][g]$-generic filter $G_{\mbox{tail}} \subseteq \mathbb{P}_{\mbox{tail}}$, in $V[G][g]$. Thus, $j(G) = G*g*G_{\mbox{tail}} \subseteq j(\mathbb{P})$ is $M$-generic, and $j''G \subseteq G*g*G_{\mbox{tail}}$. Therefore, the lifting criterion is satisfied, and so the embedding lifts to $j:V[G] \to M[j(G)]$.

The next goal is to lift the embedding through $\mathbb{Q}$. The forcing $j(\mathbb{Q})$ is the forcing to a club of non-measurables to $j(\kappa)$, and we need an $M[j(G)]$-generic filter for $j(\mathbb{Q})$. Since $G_{\mbox{tail}} \in V[G][g]$, it follows that $M[j(G)]^{\kappa} \subseteq M[j(G)]$. Also, since $|\mathbb{Q}| = \kappa$, it has at most $\kappa^+$ many dense sets in $V[G]$; by elementarity $j(\mathbb{Q})$ has at most $|j(\kappa^+)|^{V[G]} \le \kappa^{+^{\kappa}}$ many dense subsets in $M[j(G)]$. For any $\beta < \kappa$, the forcing $\mathbb{Q}$ has a dense subset which is $\le \beta$-closed. Since $\kappa < j(\kappa)$, the forcing $j(\mathbb{Q})$ has a dense subset which is $\le \kappa$-closed. Note that $c = \bigcup g$, which is club in $\kappa$, is in $M[j(G)$. Let $\overline{c} = c \ \cup \ \{\kappa\}$. For all $\delta < \kappa$, we have if $\delta \in c$ then $\delta$ is not measurable in $V$. Thus, $\delta \in c$ implies $\delta$ is not measurable in $M$, since $M$ and $V$ have the same $P(P(\delta))$ since $\delta < \kappa$. By Hamkins [11 (Corollary 22)] since $M \subseteq M[j(G)]$ satisfies the approximation and cover properties, it follows that $\delta$ is not measurable in $M[j(G)]$. By choice of $j:V \to M$, we have that $\kappa$ is not measurable in $M$. Thus $\kappa$ is not measurable in $M(j(G))$ again by Hamkins [11]. Thus $\overline{c}$ is a closed, bounded subset of $j(\kappa)$ which contains no measurable cardinals. Thus $\overline{c} \in j(\mathbb{Q})$. Thus, diagonalize to get a generic filter $g^* \subseteq j(\mathbb{Q})$, in $V[G]$, which meets a dense subset which is $\le \kappa$-closed, and which contains $\overline{c}$. Thus, let $j(g) = g^*$, and observe that $j''g \subseteq g^*$ since $g \in \bigcup g^*$. Thus, the embedding lifts to $j:V[G][g] \to M[j(G)][j(g)]$, in $V[G][g]$. Therefore, $\kappa$ is still measurable in $V[G][g]$. Thus $o(\kappa) \ge 1$ in $V[G][g]$.

Finally, $o(\kappa)^{V[G][g]} \le 1$ will follow from the fact that $\mathbb{Q}$ adds a club subset $c \subseteq \kappa$ which contains no cardinals which are measurable in $V$. Since the combined forcing does not create measurable cardinals, the club $c$ contains no cardinals which are measurable in $V[G][g]$. Since the new club has measure one in any normal measure on $\kappa$, the complement of $c$, which contains the measurable cardinals of $V[G][g]$ below $\kappa$ has measure zero. Thus, there is no normal measure on $\kappa$ in $V[G][g]$ which concentrates on the measurable cardinals of $V[G][g]$. Hence $o(\kappa)^{V[G][g]} \ngtr 1$. Thus $o(\kappa) = 1$ in $V[G][g]$. That the forcing preserves all measurable cardinals and creates no new measurable cardinals will follow from the proof of the more general theorem.
\end{proof}

The previous theorem shows how to make the Mitchell order of any measurable cardinal exactly one, for any measurable cardinal. The next theorem will show how to make the Mitchell order of any measurable cardinal exactly $\alpha$, for any $\alpha < \kappa$, in a forcing extension, whenever the order is at least $\alpha$ in the ground model. Preceeding the proof are the following lemmas, so that we may generalize the proof for any $\alpha < \kappa$.

\begin{lemma} Let $\kappa$ be an infinite cardinal with $\kappa^{< \kappa} = \kappa$. Let $S \subseteq \kappa$ be a subset of $\kappa$ which contains the singular cardinals. Then $\mathbb{Q}_S$, the forcing to add a club $C \subset S$, preserves cardinals and cofinalities, and for all $\beta < \kappa$, the forcing $\mathbb{Q}_S$ has a $\le \beta$-closed dense subset. 
\end{lemma}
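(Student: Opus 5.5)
The plan is to take $\mathbb{Q}_S$ to be the forcing whose conditions are closed bounded subsets $c \subseteq S$, with $c$ closed in its supremum and $\max c$ existing. Conditions are ordered by end-extension. There are three things to prove: the size and chain condition, the $\le\beta$-closed dense subsets, and then cardinal and cofinality preservation as a consequence.

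\textbf{Step 1 (size and chain condition).} Every condition is a bounded subset of $\kappa$. Since $\kappa^{<\kappa}=\kappa$, we get $|\mathbb{Q}_S| = \kappa$, so $\mathbb{Q}_S$ has the $\kappa^+$-c.c. Hence all cardinals and cofinalities $\ge \kappa^+$ are preserved.

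\textbf{Step 2 (closed dense subsets).} Fix $\beta<\kappa$ and let
\[
D_\beta = \{ c \in \mathbb{Q}_S \mid \max c \ge \beta \}.
\]
This set is dense: given any $c$, pick a singular cardinal $\lambda$ with $\max c, \beta < \lambda < \kappa$, for instance $\lambda = \aleph_{\xi+\omega}$ for suitable $\xi$. Then $c \cup \{\lambda\}$ is a condition, because $\lambda \in S$ and the set stays closed.

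For closure, let $\langle c_i \mid i < \gamma\rangle$ be a descending sequence in $D_\beta$ with $\gamma \le \beta$ (so $\gamma<\kappa$ is a limit ordinal). Put $c = \bigcup_i c_i$ and $\delta = \sup_i \max c_i$. The only new limit point is $\delta$, so $c \cup \{\delta\}$ is closed. It remains to check that $\delta \in S$, and this is the main point of the lemma. We may first refine the sequence so that the maxima strictly increase. Then $\mathrm{cf}(\delta) = \mathrm{cf}(\gamma) \le \gamma \le \beta \le \max c_0 < \delta$, so $\delta$ is singular.

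One subtlety remains, namely that $\delta$ must be a singular \emph{cardinal}, not merely a singular ordinal. I would handle this by strengthening the density requirement: require in addition that every element of $c$ above $\beta$ is a cardinal. This is achievable because in Step 2's density argument we only add cardinals, and $\beta$ is fixed. Then $\delta$ is a limit of cardinals, hence a cardinal. Being singular, it lies in $S$. So $c\cup\{\delta\} \in D_\beta$ is a lower bound, and $D_\beta$ is $\le\beta$-closed. Equivalently, one can take $D_\beta$ to consist of conditions whose elements above some fixed point are cardinals, adjusting the details as needed.

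\textbf{Step 3 (preservation below $\kappa$).} For $\beta<\kappa$, forcing with $\mathbb{Q}_S$ is equivalent to forcing with its dense subset $D_\beta$. That subset is $\le\beta$-closed, hence $\beta^+$-distributive, so it adds no new $\beta$-sequences of ordinals. Since $\beta<\kappa$ is arbitrary, no new sequences of length $<\kappa$ are added. Therefore cardinals and cofinalities $\le\kappa$ are preserved, since any collapse or cofinality change below $\kappa$ would be witnessed by a short sequence. In particular $\kappa$ stays regular if it was, because $\mathbb{Q}_S$ is $<\kappa$-distributive. Combined with Step 1, all cardinals and cofinalities are preserved.

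Finally, genericity gives that $C = \bigcup G$ is unbounded in $\kappa$, by the density argument, and closed, since any limit point of $C$ below $\kappa$ is a limit point inside some single condition. Hence $C$ is a club subset of $S$.
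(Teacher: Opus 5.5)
\textbf{The gap: your refined dense set is not dense.} Your setup matches the paper's: closed bounded subsets of $S$ under end-extension, the $\kappa^+$-c.c.\ from $|\mathbb{Q}_S|=\kappa$, dense sets indexed by the maximum, and distributivity below $\kappa$. You also correctly isolate the crux, which the paper glosses over. The paper takes $D_\beta=\{d:\max d\ge\beta\}$ and justifies $\le\beta$-closure only by $\mathrm{cf}(\kappa)>\beta$. That puts the supremum $\delta$ below $\kappa$, but not in $S$. For example, if $S$ also contains all successor ordinals, the conditions $c_n=\{\aleph_\omega\}\cup\{\aleph_\omega+k+1:k<n\}$ lie in $D_\omega$ and have no lower bound, since $\aleph_\omega+\omega\notin S$.

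Your repair fails, however. Density must be checked starting from an arbitrary condition $c$, and $c$ may already contain non-cardinals above $\beta$. Every end-extension of $c$ keeps them. For instance, take $S$ to be the set of all singular ordinals below $\kappa$ and $\beta=\omega$: no extension of $\{\omega\cdot 2\}$ lies in your set. Your fallback about ``elements above some fixed point'' does not help. If the point depends only on $\beta$, it is the same non-dense set; if it may depend on $c$, the condition is vacuous.

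\textbf{The fix: constrain only the top element.} Let
\[
E_\beta=\{c\in\mathbb{Q}_S:\max c\ge\beta\text{ and }\max c\text{ is a cardinal}\}.
\]
Your own density argument shows $E_\beta$ is dense, since the added singular cardinal $\lambda$ becomes the new maximum. For closure, take a descending sequence $\langle c_i:i<\gamma\rangle$ in $E_\beta$ with $\gamma\le\beta$. There are two cases.
\begin{itemize}
\item If the maxima are eventually constant, then so is the sequence, because an end-extension with the same maximum is equal. Its eventual value is a lower bound.
\item Otherwise, $\delta=\sup_i\max c_i$ is a limit of cardinals, hence a cardinal, and $\mathrm{cf}(\delta)\le\gamma\le\beta\le\max c_0<\delta$. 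So $\delta$ is a singular cardinal and lies in $S$. It is below $\kappa$ by regularity. Then $\bigcup_i c_i\cup\{\delta\}\in E_\beta$ is a lower bound.
\end{itemize}
With $E_\beta$ in place of your set, Steps 1 and 3 go through unchanged.

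\textbf{A tacit hypothesis.} Like the paper, you use without stating it that singular cardinals are unbounded in $\kappa$ (your choice $\aleph_{\xi+\omega}<\kappa$). That is, you need $\kappa$ to be a limit cardinal, and this must be assumed. For $\kappa=\omega_1$ under CH, with $S$ the set of successor ordinals, conditions are finite and $\mathbb{Q}_S$ collapses $\omega_1$.
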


\begin{proof} Conditions $c \in \mathbb{Q}_S$ are closed, bounded subsets of $S$. The ordering on $\mathbb{Q}_S$ is end extension: $c \le d$ if and only if $a \in c \setminus d \implies a > \max(d)$. Let $G \subseteq \mathbb{Q}_S$ be $V$-generic, and let $C = \bigcup G$. Then $C \subseteq S$ is club in $\kappa$. First, see that $C$ is unbounded. Let $\beta \in \kappa$, and let $D_{\beta} = \{ d \in \mathbb{Q}_S \ : \ \max(d) > \beta\}$. Then, $D_{\beta}$ is dense in $\mathbb{Q}_S$, since if $c \in \mathbb{Q}_S$, the following is a condition: $d = c \cup \{\beta'\}$, where $\beta' > \beta$ and $\beta'$ is in $S$. Then, $d$ is an end-extension of $c$, hence stronger, and $d \in D_{\beta}$, shows $D_{\beta}$ is dense in $\mathbb{Q}_S$. Hence, there is $c_{\beta} \in D_{\beta} \cap G$, so that there is an element of $C$ above $\beta$. Thus, $C$ is unbounded in $\kappa$. Next, see that $C$ is closed. Suppose $\delta \cap C$ is unbounded in $\delta < \kappa$. Since $C$ is unbounded, there is $\delta' \in C$, where $\delta' > \delta$, and a condition $c \in C$ which contains $\delta'$. Since the conditions are ordered by end-extension, and since there is a sequence (possibly of length 1) of conditions which witness that $\delta \cap C$ is unbounded in $C$, which will all be contained in the condition $c$, which contains an element above $\delta$, it follows that $\delta \cap c$ is unbounded in $\delta$. Since $c$ is closed, $\delta \in c$. Therefore $\delta \in C$, which shows $C$ is closed.

Furthermore, $\mathbb{Q}_S$ preserves cardinals and cofinalities $\ge \kappa^+$ since $|\mathbb{Q}_S| = \kappa^{< \kappa} = \kappa$. If $\beta < \kappa$, then $\mathbb{Q}_S$ is not necessarily $\le \beta$-closed. Let $\beta$ be the first limit of $S$ which is not in $S$. Then, the initial segments of $S$ below $\beta$ are in $\mathbb{Q}_S$, and union up to $S \cap \beta$, so that no condition can close this $\beta$-sequence since $\beta \in S$. However, for every $\beta < \kappa$ the set $D_{\beta} = \{ d \in \mathbb{Q}_S \ : \ \max(d) \ge \beta \}$ is dense in $\mathbb{Q}_S$ and is $\le \beta$-closed since $\mbox{cof($\kappa$)} > \beta$. Thus, for every $\beta < \kappa$, the poset $\mathbb{Q}_S$ is forcing equivalent to $\mathbb{C}_{\beta}$, which is $\le \beta$-closed. Thus, $\mathbb{Q}_S$ preserves all cardinals, cofinalities, and strong limits $\le \kappa$.
\end{proof}

The following lemma is about finding a witness for the Mitchell rank of each normal measure $\mu$ on a measurable cardinal $\kappa$. By definition $o(\kappa)$ is the height of the well-founded Mitchell relation $\vartriangleleft$ on $m(\kappa)$, the collection of normal measures on $\kappa$. Thus we have seen that $o(\kappa) = \{ o(\mu) + 1 \ | \ \mu \in m(\kappa)\}$. Similarly we recall the definition of rank for $\mu \in m(\kappa)$ with respect to $\vartriangleleft$ is $o(\mu) = \sup\{o(\nu) + 1 \ | \ \nu \vartriangleleft \mu \}$.

\begin{lemma} If $\kappa$ is a measurable cardinal, and $\mu$ is a normal measure on $\kappa$, and $j_{\mu} : V \to M_{\mu}$ is the ultrapower embedding by $\mu$ with critical point $\kappa$, then $o(\mu) = o(\kappa)^{M_{\mu}}$. Thus, $\beta < o(\kappa)$ if and only if there exists $j : V \to M$ elementary embedding with critical point $\kappa$ with $o(\kappa)^M = \beta$.
\end{lemma}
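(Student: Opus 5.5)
The plan is to compare the normal measures on $\kappa$ in $M_\mu$ with the measures below $\mu$ in the Mitchell order of $V$, and to check that the order, and hence the rank, is computed the same way in both models. The basic tool is the closure fact used in the proof of Theorem 6: since $j_\mu$ is an ultrapower by a normal measure, $M_\mu^\kappa \subseteq M_\mu$. In particular $V_{\kappa+1} \subseteq M_\mu$, so $P(\kappa)$ and $P(P(\kappa))\cap M_\mu$ consist of the same kind of objects in both models.

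\emph{Step 1.} Show that $m(\kappa)^{M_\mu} = \{\nu \in m(\kappa) \ | \ \nu \vartriangleleft \mu\}$. A normal measure on $\kappa$ is a subset of $P(\kappa)$. Its being an ultrafilter, $\kappa$-complete, nonprincipal and normal involves only quantification over $P(\kappa)$, $\kappa$-sequences from $P(\kappa)$, and functions $\kappa\to\kappa$. All of these are the same in $V$ and $M_\mu$ by $\kappa$-closure. Hence an element $\nu \in M_\mu$ is a normal measure in $M_\mu$ if and only if it is one in $V$, that is, if and only if $\nu \vartriangleleft \mu$.

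\emph{Step 2.} Show that for $\nu,\nu' \in m(\kappa)^{M_\mu}$ we have $\nu' \vartriangleleft \nu$ in $M_\mu$ if and only if $\nu' \vartriangleleft \nu$ in $V$. The point is that $\nu' \in \mathrm{Ult}(V,\nu)$ iff $\nu' = [f]_\nu$ for some $f:\kappa \to V_{\kappa+1}$, since the elements of $\mathrm{Ult}(V,\nu)$ lying in $j_\nu(V_{\kappa+1})$ are represented by such $f$. Every such $f$ lies in $M_\mu$, and the equivalence and membership relations mod $\nu$ between such functions are the same in $M_\mu$ as in $V$. So $\mathrm{Ult}(M_\mu,\nu)$ and $\mathrm{Ult}(V,\nu)$ agree on this initial part, and in particular agree on which elements of $P(P(\kappa))$ they contain.

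\emph{Step 3.} Put these together. The relation $\vartriangleleft$ restricted to $m(\kappa)^{M_\mu}$ is a set in $M_\mu$ and is the same relation there as in $V$. The rank function of a well-founded set relation is absolute between transitive models, so $o(\nu)^{M_\mu}$ equals the rank of $\nu$ computed in $V$ within the set $\{\nu \ | \ \nu\vartriangleleft\mu\}$. Because $\vartriangleleft$ is transitive, this set is downward closed, so that rank equals $o(\nu)$. Therefore
\[ o(\kappa)^{M_\mu} = \sup\{o(\nu)+1 \ | \ \nu \vartriangleleft \mu\} = o(\mu). \]

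\emph{Step 4: the ``thus'' clause.} For the forward direction, suppose $\beta < o(\kappa)$. The rank function of a well-founded relation takes every value below its height, so there is $\mu$ with $o(\mu)=\beta$, and $j_\mu$ witnesses the claim. For the converse I would first restrict to ultrapower embeddings $j=j_\mu$: then $o(\kappa)^M = o(\mu) < o(\kappa)$ directly by Step 3.

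For arbitrary $j$, the approach is to set $\mu=\{X\subseteq\kappa \ | \ \kappa\in j(X)\}$ and factor $j = k\circ j_\mu$ with $\mathrm{crit}(k)>\kappa$. Then $o(\kappa)^M = k(o(\mu))$. This is the step I expect to be the main obstacle. Whether $k$ fixes $o(\mu)$ depends on $o(\mu)$ lying below $\mathrm{crit}(k)$, and that can fail for large Mitchell rank. If it cannot be arranged, the intended reading of the lemma is presumably with $j$ ranging over ultrapower embeddings by normal measures, which is all the later arguments need.
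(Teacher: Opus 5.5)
Your Steps 1--3 and the forward half of Step 4 follow the paper's route. The paper also uses $M_\mu^\kappa\subseteq M_\mu$ to conclude that the normal measures of $M_\mu$ are exactly the $\nu\vartriangleleft\mu$, with $\vartriangleleft$ computed correctly among them. It then writes the same chain of equalities ending in $o(\mu)$, and gets the forward direction from a $\mu$ with $o(\mu)=\beta$. You add the justifications the paper only gestures at: absoluteness of normality, representing $\nu'$ by functions into $V_{\kappa+1}$, and the downward closure of $\{\nu \mid \nu\vartriangleleft\mu\}$, which makes the ranks agree. These are correct.

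The real difference is the converse for an arbitrary $j$. The paper handles it in one sentence: with $\mu$ the induced normal measure, it asserts that $M$ and $M_\mu$ have the same normal measures and the same Mitchell order and ranks. That is exactly the step your formula $o(\kappa)^M=k(o(\mu))$ isolates, and your doubt is justified. Both that assertion and the converse itself are false in general.

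Here is a counterexample. Let $j:V\to M$ have critical point $\kappa$ and $V_{\kappa+2}\subseteq M$; for example, take a $2^\kappa$-supercompactness embedding, so $M^{2^\kappa}\subseteq M$.
\begin{itemize}
\item The normal measures of $M$ on $\kappa$ are exactly those of $V$, since normality only involves objects coded in $V_{\kappa+1}$.
\item $\nu'\vartriangleleft\nu$ is witnessed by some $f:\kappa\to V_\kappa$, and all such $f$ lie in $V_{\kappa+1}\subseteq M$. So $M$ computes $\vartriangleleft$ on $m(\kappa)$ correctly.
\item Hence $o(\kappa)^M=o(\kappa)^V$. So $\beta=o(\kappa)^V$ satisfies the right-hand side of the biconditional, while $\beta<o(\kappa)$ fails.
\end{itemize}
The induced $\mu$ lies in $M$ but not in $M_\mu$, so the paper's claim visibly breaks. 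Since this $M$ is even $2^\kappa$-closed, adding a closure hypothesis on $j$ does not rescue the statement.

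So nothing is missing from your argument. The version you prove is the correct form of the lemma: $j$ ranging over normal ultrapowers, or more generally any $j$ whose factor map satisfies $k(o(\mu))=o(\mu)$, e.g.\ $o(\mu)<\mathrm{crit}(k)$. You can state the failure of the general converse outright rather than as a possibility.

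One caution about your final sentence. Lemma 9 applies the converse to the restriction $j\upharpoonright V$ obtained from Hamkins' theorem. Using only the restricted version there requires checking that this restriction is a normal ultrapower of $V$, which is not automatic.
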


\begin{proof} Suppose $\kappa$ is measurable, $\mu$ is a normal ultrafilter on $\kappa$, and $j_{\mu} : V \to M_{\mu}$ is the ultrapower embedding by $\mu$ with critical point $\kappa$. The model $M_{\mu}$ computes $\vartriangleleft$ correctly for $\nu \vartriangleleft \mu$, which means $\nu \in M_{\mu}$, since $M^{\kappa} \subseteq M$ and so $M_{\mu}$ has all the functions $f: \kappa \to V_{\kappa}$ needed to check whether $\nu \vartriangleleft \mu$. Then
\begin{equation}
\begin{split}
o(\kappa)^{M_{\mu}} & = \sup\{o(\nu)^{M_{\mu}} + 1 \ | \ \nu \in M_{\mu} \} \\
 & = \sup\{o(\nu)^{M_{\mu}}+1 \ | \ \nu \vartriangleleft \mu\}\\
 & = \sup\{o(\nu) + 1 \ | \ \nu \vartriangleleft \mu\} \\
 & = o(\mu).
\end{split}
\end{equation}

Thus, the Mitchell rank of $\kappa$ in $M_{\mu}$ is the Mitchell rank of $\mu$. Let $\beta < o(\kappa)$. Since $\vartriangleleft$ is well-founded, there is $\mu$ a normal measure on $\kappa$ such that $o(\mu) = \beta$. Then if $j_{\mu} : V \to M_{\mu}$ is the ultrapower by $\mu$ with critical point $\kappa$, then $o(\kappa)^{M_{\mu}} = o(\mu) = \beta$, as desired. Conversely, suppose there is an elementary embedding $j: V \to M$ with cp$(j) = \kappa$ and $o(\kappa)^M = \beta$. Let $\mu$ be the induced normal measure, using $\kappa$ as a seed ($X \in \mu$ if and only if $\kappa \in j(X)$) and let $M_{\mu}$ be the ultrapower by $\mu$. Then $M$ and $M_{\mu}$ have the same normal measures and the same Mitchell order and ranks, so $M_{\mu} \models o(\kappa) = \beta$. Since we have established $o(\kappa)^{M_{\mu}} = o(\mu)$, the Mitchell rank of $\mu$ in $m(\kappa)$ is $\beta$, that is $o(\mu) = \beta$. Thus since $o(\kappa) = \{ o(\mu) + 1 \ | \ \mu \in m(\kappa)\}$, it follows that $o(\kappa)^V > \beta$.
\end{proof}

The following lemma generalizes Corollary 2 of Hamkins [11] by showing that if $V \subseteq \overline{V}$ satisfies the $\delta$ approximation and cover properties, then the Mitchell rank of a cardinal $\kappa > \delta$ cannot increase between $V$ and $\overline{V}$.

\begin{lemma} Suppose $V \subseteq \overline{V}$ satisfies the $\delta$ approximation and cover properties. If $\kappa > \delta$ then $o(\kappa)^{\overline{V}} \le o(\kappa)^V$.
\end{lemma}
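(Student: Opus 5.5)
I will prove by induction on $\beta$ the statement: for every $\kappa > \delta$ and every normal measure $\bar\mu$ on $\kappa$ in $\overline{V}$, the restriction $\mu = \bar\mu \cap V$ is in $V$, is a normal measure on $\kappa$ there, and $o(\bar\mu)^{\overline{V}} \le o(\mu)^V$. Taking suprema then gives $o(\kappa)^{\overline{V}} \le o(\kappa)^V$. The underlying tool is Hamkins's result (his Corollary 22, cited in [11] and used in the theorems above): when $V \subseteq \overline{V}$ has the $\delta$ approximation and cover properties and $j:\overline{V} \to \overline{M}$ is an ultrapower embedding with critical point $\kappa > \delta$, the restriction $j \restriction V : V \to M$ is definable in $V$ and is an elementary embedding of $V$. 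Here $M = \bigcup_\alpha j(V_\alpha)$, and the pair $M \subseteq \overline{M}$ again has the $\delta$ approximation and cover properties, by elementarity. In particular $\mu = \bar\mu \cap V$ is the normal measure in $V$ derived from $j\restriction V$ using the seed $\kappa$. So every $\overline{V}$-normal measure restricts to a $V$-normal measure, and the map $\bar\mu \mapsto \mu$ is defined.

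The core step is to show that this map respects the Mitchell order: if $\bar\nu \vartriangleleft \bar\mu$ in $\overline{V}$, then $\nu \vartriangleleft \mu$ in $V$. Suppose $\bar\nu \in \overline{M}_{\bar\mu}$. Apply the restriction result to the pair $M \subseteq \overline{M}$, where $M$ is the image of $V$ under $j_{\bar\mu}$; this is legitimate since $\kappa > \delta = j(\delta)$. We get $\bar\nu \cap M \in M$. Next I need that $\bar\nu \cap M = \bar\nu \cap V$, which holds because $M$ and $V$ have the same subsets of $\kappa$. Indeed, every $A \subseteq \kappa$ in $V$ satisfies $A = j(A)\cap\kappa$, and every such subset in $M$ is in $\overline{M}\cap V$-like position; more precisely, $\kappa$-sequences of $V$ lie in $M$ by the usual closure argument transferred through $j\restriction V$. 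Thus $\nu \in M$.

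The remaining issue is that $M$ may differ from the $V$-ultrapower $M_\mu$. I will handle this by factoring $j\restriction V$ as $k \circ j_\mu$, where $k$ has critical point above $\kappa$, or at least fixes $P(\kappa)$. Then $\nu \in M$ gives $\nu \in M_\mu$, exactly as in the argument of the previous lemma that embeddings with the same induced measure have the same measures on $\kappa$. Hence $\nu \vartriangleleft \mu$.

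Granting order preservation, induction gives the bound:
\[
o(\bar\mu)^{\overline{V}} = \sup\{o(\bar\nu)+1 \mid \bar\nu \vartriangleleft \bar\mu\} \le \sup\{o(\nu)+1 \mid \nu \vartriangleleft \mu\} = o(\mu)^V .
\]
Hence $o(\kappa)^{\overline{V}} = \sup_{\bar\mu} (o(\bar\mu)+1) \le \sup_\mu (o(\mu)+1) = o(\kappa)^V$.

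Alternatively, using the previous lemma, one can induct directly on $o(\kappa)^{\overline{V}}$ via the models $M \subseteq \overline{M}$: $o(\kappa)^{\overline{M}} \le o(\kappa)^M$ by the induction hypothesis applied inside $\overline{M}$ by elementarity, and $o(\kappa)^M \le o(\mu)$. I expect the main obstacle to be the transfer step $\bar\nu \cap V \in M_\mu$: checking that $\bar\nu \cap M$ equals $\bar\nu\cap V$, and that membership in $M$ passes to $M_\mu$. I would try the factor-map argument first, since $P(\kappa)^V \subseteq M$ together with Hamkins's definability of $j\restriction V$ should make $k$ trivial on $V_{\kappa+1}$.
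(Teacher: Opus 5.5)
Your first steps are fine: $\mu=\bar\mu\cap V\in V$, and $\nu=\bar\nu\cap M\in M$. The gap is the transfer step ``$\nu\in M$ implies $\nu\in M_\mu$.''

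Let $k:M_\mu\to M$, $k([f]_\mu)=(j\upharpoonright V)(f)(\kappa)$, be the factor map. What you actually have is $\mathrm{cp}(k)>\kappa$ and $P(\kappa)^M=P(\kappa)^V=P(\kappa)^{M_\mu}$. (The inclusion $P(\kappa)^M\subseteq V$ should come from the $\delta$ approximation property, since every bounded piece of such a set lies in $V_\kappa^M=V_\kappa$, not from a closure argument.) These facts make $k$ the identity on $V_{\kappa+1}$ and on $P(P(\kappa))^{M_\mu}$. That gives $P(P(\kappa))^{M_\mu}\subseteq M$, which is the wrong inclusion. You need $P(P(\kappa))^{M}\subseteq M_\mu$, i.e.\ that $\nu$ lies in the range of $k$, and nothing you have yields this.

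Hamkins's theorem tells you that $j\upharpoonright V$ is definable in $V$, not that it is the ultrapower by $\mu$. If it were, say, an extender embedding with $V_{\kappa+2}\subseteq M$, then even $\mu\in M$, although $\mu\notin M_\mu$. For the same reason, the remark in Lemma 8 that an embedding and the ultrapower by its induced measure ``have the same normal measures'' fails for general embeddings and cannot be invoked here. Order preservation is also strictly stronger than the lemma. Your argument does not exclude that $\overline M$ contains some $\bar\nu$ with $\bar\nu\cap V=\mu$, and then $\bar\nu\vartriangleleft\bar\mu$ while $\nu=\mu\not\vartriangleleft\mu$.

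The paper never needs order preservation; it compares heights only. It inducts on $\kappa$; your alternative induction on the value of $o(\kappa)^{\overline V}$ transfers under $j$ just as well. Its steps are:
\begin{itemize}
\item by Lemma 8, take an ultrapower $j:\overline V\to\overline M$ with $o(\kappa)^{\overline M}=o(\kappa)^V$;
\item restrict to $j\upharpoonright V:V\to M$ via Hamkins;
\item apply $j$ to the induction hypothesis to get $o(\kappa)^{\overline M}\le o(\kappa)^M$;
\item get $o(\kappa)^M<o(\kappa)^V$ from Lemma 8, a contradiction.
\end{itemize}

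That last strict inequality is where your difficulty reappears, in milder form. Without more information you only get $o(\kappa)^M\le o(\kappa)^V$. (Each normal measure on $\kappa$ in $M$ is one in $V$, and $M$ computes $\vartriangleleft$ among them correctly since $V_{\kappa+1}^M=V_{\kappa+1}$.) This yields only $o(\kappa)^{\overline V}\le o(\kappa)^V+1$. More precisely, $o(\kappa)^M=k(o(\mu))$, which equals $o(\mu)$ only when $o(\mu)<\mathrm{cp}(k)$.

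The paper closes this by treating $j\upharpoonright V$ as the ultrapower of $V$ by a normal measure (necessarily $\mu$), which it attributes to Hamkins. Granting that same input, $M=M_\mu$ and your transfer step becomes immediate. Your proposal neither uses that input nor replaces it, so as written the key step is unsupported.
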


\begin{proof} Suppose $V \subseteq \overline{V}$ satisfies the $\delta$ approximation and cover properties. Suppose $\kappa > \delta$. We want to show $o(\kappa)^{\overline{V}} \le o(\kappa)^V$ by induction on $\kappa$. Assume inductively that for all $\kappa' < \kappa$, we have $o(\kappa')^{\overline{V}} \le o(\kappa')^V$. If $o(\kappa)^{\overline{V}} > o(\kappa)$ then by Lemma 8 get elementary ultrapower embedding $j:\overline{V} \to \overline{M}$ with critical point $\kappa$ and $o(\kappa)^{\overline{M}} = o(\kappa)^V$. By Hamkins [11], $j$ is a lift of an ultrapower embedding $j \upharpoonright V: V \to M$ for some $M$, and this embedding is a class in $V$. Again by Lemma 8 we have established that $o(\kappa)^M < o(\kappa)^V$. Thus, $o(\kappa)^M < o(\kappa)^{\overline{M}}$. However, this contradicts that $j$ applied to the induction hypothesis gives $o(\kappa)^{\overline{M}} \le o(\kappa)^M$ since $j < j(\kappa)$. Therefore, $o(\kappa)^{\overline{V}} \le o(\kappa)^V.$
\end{proof}

\begin{lemma} If $V \subseteq V[G]$ is a forcing extension and every normal ultrapower embedding $j:V \to M$ in $V$ (with any critical point $\kappa$) lifts to an embedding $j:V[G] \to M[j(G)]$, then $o(\kappa)^{V[G]} \ge o(\kappa)^V$. In addition, if $V \subseteq V[G]$ also has $\delta$ approximation and $\delta$ cover properties, then $o(\kappa)^{V[G]} = o(\kappa)^V$.
\end{lemma}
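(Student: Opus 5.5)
We argue by induction on $\kappa$, and within a fixed $\kappa$ by induction on the Mitchell rank of normal measures. The claim to prove is: for every normal measure $\mu$ on $\kappa$ in $V$, the lifted embedding gives a normal measure $\mu^*$ on $\kappa$ in $V[G]$ with $o(\mu^*)^{V[G]} \ge o(\mu)^V$. Then $o(\kappa)^{V[G]} \ge \sup\{o(\mu)+1 \mid \mu \in m(\kappa)\} = o(\kappa)^V$. The tool is Lemma 8: $o(\mu) = o(\kappa)^{M_\mu}$, and $\beta < o(\kappa)$ exactly when some embedding with critical point $\kappa$ has target model in which $o(\kappa) = \beta$.

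Fix $\mu$ with ultrapower $j: V \to M$. By hypothesis $j$ lifts to $j: V[G] \to M[j(G)]$, with $j(G)$ definable in $V[G]$. The critical point is still $\kappa$, so the induced measure $\mu^* = \{X \subseteq \kappa \mid \kappa \in j(X)\}$ is a normal measure in $V[G]$. Its ultrapower factors into the lifted $j$, and as in the proof of Lemma 8, $o(\mu^*)^{V[G]} = o(\kappa)^{M[j(G)]}$. It therefore suffices to show $o(\kappa)^{M[j(G)]} \ge o(\kappa)^M = o(\mu)$.

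This is where elementarity enters. The hypothesis ``every normal ultrapower embedding of the ground model lifts through the forcing'' is a statement in $V$ about $G$'s name $\dot G$ and $\mathbb{P}$. Applying $j$, $M$ satisfies the same statement for $j(\mathbb{P})$, with $j(G)$ generic over $M$. We then run the argument inside $M$ at $\kappa$, which is below $j(\kappa)$. For each normal measure $\nu \in M$ on $\kappa$, the lifted measure $\nu^*$ in $M[j(G)]$ has $o(\nu^*) \ge o(\nu)^M$, by the induction on rank carried out inside $M$. Hence $o(\kappa)^{M[j(G)]} \ge o(\kappa)^M$.

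The main obstacle is making this transfer legitimate. The lifting hypothesis quantifies over classes, and the generic $j(G)$ must be one to which the $M$-version of the hypothesis applies. I would first try to formalize the hypothesis as a first-order property of $\mathbb{P}$ in $V$: every normal measure lifts in every extension by $\mathbb{P}$, which is expressible via names for measures. That formulation is preserved by $j$ and applies to any $M$-generic filter. If this is awkward, an alternative is to avoid $M$ altogether by a direct rank induction in $V$: show that $\nu \vartriangleleft \mu$ implies $\nu^* \vartriangleleft \mu^*$, since $\nu \in M$ and $\nu^*$ is computable in $M[j(G)]$ from $\nu$ and $G$. This gives $o(\mu^*) \ge o(\mu)$ by induction on $o(\mu)$.

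For the second statement, Lemma 9 applies to $V \subseteq V[G]$ with the $\delta$ approximation and cover properties and $\kappa > \delta$. It gives $o(\kappa)^{V[G]} \le o(\kappa)^V$, which combined with the first part yields equality.
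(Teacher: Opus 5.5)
The reduction to $o(\kappa)^{M[j(G)]} \ge o(\kappa)^M$ is fine, and so is your treatment of the second part via Lemma 9. The gap is in the step that proves this inequality.

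You justify $o(\nu^*)^{M[j(G)]} \ge o(\nu)^M$, for the normal measures $\nu \in M$ on $\kappa$, ``by the induction on rank carried out inside $M$,'' with that rank induction taken within the fixed cardinal $\kappa$. That hypothesis is a statement in $V$ (or $V[G]$) about measures on $\kappa$ and their lifts through $\mathbb{P}$ into $V[G]$. It fails in both possible ways of using it:
\begin{itemize}
\item You cannot use it directly. Each $\nu \in M$ is indeed a measure in $V$ of rank $< o(\mu)$, but you need control of lifts through $j(\mathbb{P})$ evaluated in $M[j(G)]$, not lifts through $\mathbb{P}$ in $V[G]$.
\item You cannot push it through $j$. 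Since $\kappa$ is a parameter, $j$ turns ``measures on $\kappa$ of rank $< o(\mu)$'' into ``measures on $j(\kappa)$ of rank $< j(o(\mu))$'' in $M$. That says nothing about $\kappa$.
\end{itemize}
Transferring the lifting hypothesis to $M$, the step you flag as the main obstacle, is beside the point. It only produces lifts of each $\nu$ in $M[j(G)]$, with no control over their rank. Since that hypothesis holds at every critical point anyway, your remark that $\kappa$ is below $j(\kappa)$ does no work there.

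Your fallback fails too. $\nu^*$ is not computable from $\nu$ and $G$: it depends on which $M_\nu$-generic filter for $j_\nu(\mathbb{P})$ was chosen in $V[G]$ to lift $j_\nu$. The lemma's hypothesis gives no way to make that choice inside $M[j(G)]$. Even if it did, you would still need that particular lift to have rank $\ge o(\nu)$.

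The missing idea is to apply $j$ to the inductive conclusion rather than to the lifting hypothesis. Your announced outer induction on $\kappa$, which you never actually invoke, is exactly what supplies it, and this is the paper's argument:
\begin{itemize}
\item The hypothesis ``$o(\kappa')^{V[G]} \ge o(\kappa')^V$ for all $\kappa' < \kappa$'' is first order in $V[G]$, because the ground $V$ is definable in $V[G]$ and the lifted $j$ sends it to $M$.
\item Elementarity then gives $o(\kappa')^{M[j(G)]} \ge o(\kappa')^M$ for all $\kappa' < j(\kappa)$.
\item Instantiate at $\kappa' = \kappa$.
\end{itemize}
Here $\kappa < j(\kappa)$ is what matters. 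Neither the inner rank induction nor a transfer of the lifting hypothesis is needed.

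A rank induction can also be made to work, but only if it is uniform over all cardinals. Take as hypothesis ``for every $\lambda$ and every normal measure on $\lambda$ of rank $< \alpha$, $o(\lambda)^{V[G]}$ exceeds that rank.'' It contains no parameter $\kappa$. So $j$ sends it to a statement about ranks $< j(\alpha)$, with $j(\alpha) \ge \alpha$, at every $\lambda$ in $M$, including $\kappa$. Restricting the rank induction to a fixed $\kappa$ is exactly what breaks your version.
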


\begin{proof} Suppose $V \subseteq V[G]$ is a forcing extension where every normal ultrapower embedding in $V$ lifts to a normal ultrapower embedding in $V[G]$. Suppose $\kappa$ is a measurable cardinal, and for every measurable cardinal $\kappa' < \kappa$, assume $o(\kappa')^{V[G]} \ge o(\kappa')^V$. Fix any $\beta < o(\kappa)$. By Lemma 8 there exists an elementary embedding $j:V \to M$ with cp$(j)= \kappa$ and $M \models o(\kappa) = \beta$. Then, by the assumption on $V \subseteq V[G]$, this lifts $j$ to $j:V[G] \to M[j(G)]$. Then, applying $j$ to the induction hypothesis gives $o(\kappa)^{M[j(G)]} \ge o(\kappa)^M$ since $\kappa < j(\kappa)$. Then, since $M \models o(\kappa) = \beta$ it follows that $o(\kappa)^{M[j(G)]} \ge \beta$. Thus by Lemma 8 $o(\kappa)^{V[G]} > \beta$. Thus $o(\kappa)^{V[G]} \ge o(\kappa)^V$.

If $V \subseteq V[G]$ also satisfies the $\delta$ approximation and $\delta$ cover properties, then by Lemma 9 the Mitchell rank does not go up between those models. Thus $o(\kappa)^{V[G]} = o(\kappa)^V$.
\end{proof}

\begin{theorem} For any $V \models$ ZFC + GCH and any ordinal $\alpha$, there is a forcing extension $V[G]$ where every cardinal $\kappa$ above $\alpha$ in $V[G]$ has $o(\kappa)^{V[G]} = \min\{\alpha, o(\kappa)^V\}.$ In particular, if $1 \le \alpha$, then the forcing preserves all measurable cardinals of $V$ and creates no new measurable cardinals. 
\end{theorem}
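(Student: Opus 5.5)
We will generalize the proof of Theorem 6 (the $o(\kappa)=1$ case) by replacing ``club of non-measurables'' with ``club of cardinals of Mitchell rank $<\alpha$''. Let $\mathbb{P}$ be the class-length Easton support iteration which, at each inaccessible $\gamma>\alpha$, forces with $\mathbb{Q}_{S_\gamma}$, where $S_\gamma=\{\delta<\gamma \mid o(\delta)^V<\alpha\}$. At all other stages the iteration is trivial. The set $S_\gamma$ contains all singular cardinals, so Lemma 7 applies: under GCH, $\mathbb{Q}_{S_\gamma}$ preserves cardinals and cofinalities and has $\le\beta$-closed dense subsets for each $\beta<\gamma$. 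Standard Easton arguments then give that $V[G]\models$ ZFC and that cardinals are preserved. The first nontrivial stage lies above $\alpha$ and gives a closure point. So for every $\kappa$ above that point, $V\subseteq V[G]$ has the $\delta$ approximation and cover properties for small $\delta$. Lemma 9 then gives the upper bound $o(\kappa)^{V[G]}\le o(\kappa)^V$. (Cardinals above $\alpha$ but below the first inaccessible above $\alpha$ are not measurable on either side, since the forcing is trivial there and small forcing is harmless.)

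The next step is the upper bound $o(\kappa)^{V[G]}\le\alpha$ for inaccessible $\kappa>\alpha$. Stage $\kappa$ adds a club $C\subseteq\kappa$ with $o(\delta)^V<\alpha$ for all $\delta\in C$, and the tail of the iteration is $\le\kappa$-distributive, so $C$ survives as a club in $V[G]$. By the previous paragraph, $o(\delta)^{V[G]}\le o(\delta)^V<\alpha$ for $\delta\in C$. Every normal measure $\mu$ on $\kappa$ in $V[G]$ contains $C$. By Lemma 8 and elementarity, $o(\mu)=o(\kappa)^{M_\mu}$, and $o(\kappa)^{M_\mu}$ is the $M_\mu$-rank of a point of $j_\mu(C)$. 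Hence $o(\mu)<\alpha$, and so $o(\kappa)^{V[G]}\le\alpha$.

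For the lower bound, fix $\beta<\min\{\alpha,o(\kappa)^V\}$. By Lemma 8, choose a normal ultrapower $j:V\to M$ with critical point $\kappa$ and $o(\kappa)^M=\beta$. We lift $j$ through $\mathbb{P}_\kappa$, then stage $\kappa$, then the tail, following Theorem 6.
\begin{itemize}
\item For $j(\mathbb{P}_\kappa)=\mathbb{P}_\kappa*\dot{\mathbb{Q}}_{S_\kappa}*\mathbb{P}_{\rm tail}$, use $S_\kappa^M=S_\kappa$; this holds because $M$ and $V$ agree on $V_{\kappa+1}$ and hence on Mitchell ranks below $\kappa$. Diagonalize using $M[G_\kappa][g]^\kappa\subseteq M[G_\kappa][g]$, $\kappa^+$ dense sets by GCH, and the $\le\kappa$-closed dense subsets from Lemma 7.
\item For stage $\kappa$, use the master condition $c\cup\{\kappa\}$. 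This requires $\kappa\in j(S_\kappa)$, i.e.\ $o(\kappa)^M=\beta<\alpha$; this is exactly where $\beta<\alpha$ is used.
\item For the tail beyond $\kappa$, it is $\le\kappa$-closed in the appropriate sense, and we lift by the standard argument of building a generic for $j(\text{tail})$ via distributivity of the tail together with transfer $j''G_{\rm tail}$, or by taking $j$ as an ultrapower, which transfers generics below the next inaccessible.
\end{itemize}

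Next we obtain $o(\kappa)^{M[j(G)]}\ge\beta$, which Lemma 8 turns into $o(\kappa)^{V[G]}>\beta$. For this we use the inductive argument of Lemma 10. Inductively, for all $\kappa'<\kappa$ we have $o(\kappa')^{V[G]}=\min\{\alpha,o(\kappa')^V\}$. Apply $j$ to this hypothesis and use $\beta<\alpha$ to get $o(\kappa)^{M[j(G)]}=\min\{\alpha,\beta\}=\beta$. Combining the two bounds yields the equality. The ``in particular'' clause follows: when $\alpha\ge1$, we get $o(\kappa)^{V[G]}\ge1$ iff $o(\kappa)^V\ge1$.

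The main obstacle is the lifting through the full class iteration: handling the tail beyond $j(\kappa)$ and verifying the closure and counting conditions at each factor. A secondary difficulty is that the induction on $\kappa$ must be run simultaneously for the lower bound and for the upper bound, since the master condition needs ranks computed in $M[j(G)]$. I would first try to absorb the tail beyond $\kappa$ into the ground model by noting that it is $\le\kappa$-distributive and adds no subsets of $P(\kappa)$-sized objects relevant to measures.
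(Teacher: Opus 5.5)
Your proposal is correct and follows essentially the same route as the paper. Both use the same Easton-support iteration shooting clubs through $\{\delta<\gamma \mid o(\delta)^V<\alpha\}$. Both get $o(\kappa)^{V[G]}\le o(\kappa)^V$ from a closure point plus Lemma 9, and get $o(\kappa)^{V[G]}\le\alpha$ because the new club lies in every normal measure. For the lower bound, both lift a normal ultrapower with $o(\kappa)^M=\beta<\alpha$ through $\mathbb{P}_\kappa$ by diagonalization and through stage $\kappa$ with the master condition $c\cup\{\kappa\}$, ignoring the highly distributive tail. Your explicit Lemma 10--style step, applying $j$ to the inductive hypothesis to get $o(\kappa)^{M[j(G)]}=\beta$, spells out what the paper leaves implicit when it says the lower bound ``follows from the lifting arguments.''
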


\begin{proof} Let $\alpha \in$ ORD. The following notion of forcing will ensure that the Mitchell rank of all cardinals above $\alpha$ which have Mitchell rank above $\alpha$ will be softly killed to rank $\alpha$. Let $\mathbb{P}$ be an Easton support ORD-length iteration, forcing at inaccessible stages $\gamma$, to add a club $c_{\gamma} \subseteq \gamma$ such that $\delta \in c_{\gamma}$ implies $o(\delta)^V < \alpha$. Let $G \subseteq \mathbb{P}$ be $V$-generic. Let $\delta_0$ denote the first inaccessible cardinal. The forcing up to this stage, $\mathbb{P}_{\delta_0}$, adds nothing and the forcing $\mathbb{Q}$ at stage $\delta_0$ adds a club to $\delta_0$, so that $|\mathbb{P}_{\delta_0} * \dot{\mathbb{Q}}| \le \delta_0$. The forcing past stage $\delta_0$ is closed up to the next inaccessible cardinal. Since $\mathbb{P}$ has a closure point at $\delta_0$, by Lemma 13 in Hamkins [11] it follows that $V \subseteq V[G]$ satisfies the $\delta_0^+$ approximation and cover properties. Thus, by Lemma 9 the Mitchell rank does not go up between $V$ and $V[G]$. Note that any measurable cardinal is above $\delta_0^+$. 

Let $\kappa$ be a measurable cardinal above $\alpha$. Fix any $\beta_0 < \min\{\alpha, o(\kappa)^V\}$. The induction hypothesis is that for all $\kappa' < \kappa$, we have $o(\kappa')^{V[G]} = \min\{ \alpha, o(\kappa')^V\}.$ By Lemma 8, fix an elementary embedding $j:V \to M$ with cp$(j) = \kappa$ and $o(\kappa)^M = \beta_0$. We shall lift $j$ through $\mathbb{P}$. The forcing above $\kappa$ does not affect $o(\kappa)$ since the forcing past stage $\kappa$ has a dense subset which is closed up to the next inaccessible. Call $\mathbb{P}_{\kappa}*\dot{\mathbb{Q}}$ the forcing up to and including stage $\kappa$ while we lift the embedding, first through $\mathbb{P}_{\kappa}$ and then through $\mathbb{Q}$. 

Let $G_{\kappa} \subseteq \mathbb{P}_{\kappa}$ be $V$-generic, and $g \subseteq \mathbb{Q}$ be $V[G_{\kappa}]$-generic. First, we need an $M$-generic filter for $j(\mathbb{P}_{\kappa})$. Since cp$(j) = \kappa$, the forcings $\mathbb{P}_{\kappa}$ and $j(\mathbb{P}_{\kappa})$ agree up to stage $\kappa$. Since $\alpha < \kappa$, where $\alpha$ is the ordinal in the statement of the theorem, and the forcing $\mathbb{Q}$ adds a club $c_{\kappa} \subseteq \kappa$ such that $\delta \in c_{\kappa}$ implies $o(\delta)^V < \alpha$ which implies $o(\delta)^M < \alpha$, it follows that the $\kappa$th stage of $j(\mathbb{P}_{\kappa})$ is $\mathbb{Q}$. Thus, $j(\mathbb{P}_{\kappa})$ factors as $\mathbb{P}_{\kappa}*\dot{\mathbb{Q}}*\mathbb{P}_{\mbox{tail}}$ where $\mathbb{P}_{\mbox{tail}}$ is the forcing past stage $\kappa$. Since $G_{\kappa}*g$ is generic for $\mathbb{P}_{\kappa}*\dot{\mathbb{Q}}$, we just need an $M[G_{\kappa}][g]$-generic filter for $\mathbb{P}_{\mbox{tail}}$ which is in $V[G_{\kappa}][g]$ (since we want the final embedding to be there). Thus, diagonalize to get an $M[G_{\kappa}][g]$-generic filter for $\mathbb{P}_{\mbox{tail}}$ after checking we have met the criteria. Namely, since $|\mathbb{P}_{\kappa}| = |\dot{\mathbb{Q}}| = \kappa$, the forcing $\mathbb{P}_{\kappa}*\dot{\mathbb{Q}}$ has the $\kappa^+$-chain condition. Since $M^{\kappa} \subseteq M$, it follows by Hamkins [8 (Theorem 54)] that $M[G_{\kappa}][g]^{\kappa} \subseteq M[G_{\kappa}][g]$. Also, since $2^{\kappa} = \kappa^+$, the forcing $\mathbb{P}_{\kappa}$ has $\kappa^+$ many dense sets in $V$. Thus, $\mathbb{P}_{\mbox{tail}}$ has at most $|j(\kappa^+)|^V \le \kappa^{+^{\kappa}} = \kappa^+$ many dense sets in $M[G_{\kappa}][g]$. Also, since for any $\beta < \kappa$, the forcing $\mathbb{P}_{\kappa}$ has a dense subset which is $\le \beta$-closed (by Lemma 7); it follows that there is a dense subset of $\mathbb{P}_{\mbox{tail}}$ which is $\le \kappa$-closed. Thus, we may diagonalize to get $G^* \subset \mathbb{P}_{\mbox{tail}}$ for this dense subset, a generic filter in $V[G_{\kappa}][g]$, which is $M[G_{\kappa}][g]$-generic. Thus, $j(G_{\kappa}) = G_{\kappa}*g*G^*$ is $M$-generic for $j(\mathbb{P}_{\kappa})$ and $j''G_{\kappa} \subseteq G_{\kappa}*g*G^*$, so we may lift $j$ to $j: V[G_{\kappa}] \to M[j(G_{\kappa})]$. 

Next, we lift $j$ through $\mathbb{Q}$. The forcing $j(\mathbb{Q})$ adds a club $C \subseteq j(\kappa)$ such that $\delta \in C$ implies $o(\delta)^{M[j(G_{\kappa})} < \alpha$. Since $G_{\kappa}*g*G^* \in V[G_{\kappa}][g]$ and $M^{\kappa} \subseteq M$, it follows [Hamkins 8 (Theorem 53)] that $M[j(G_{\kappa})]^{\kappa} \subseteq M[j(G_{\kappa})].$ Also, by Lemma 7, there is a dense subset of $j(\mathbb{Q})$ which is $\le \kappa$-closed, and since $|\mathbb{Q}| = \kappa$, it follows that $j(\mathbb{Q})$ has at most $\kappa^+$ many dense subsets in $M[j(G_{\kappa})]$. Let $c = \cup g$, which is club in $\kappa$, and consider the set $\overline{c} = c \cup \{\kappa\} \in M[j(G_{\kappa})]$. We have $\delta \in c$ implies $o(\delta)^V < \alpha$. Since $M$ and $V$ agree on $P(P(\delta))$ for $\delta < \kappa$, it follows that $o(\delta)^V = o(\delta)^M$. By Lemma 9, we have $o(\delta)^{M[j(G_{\kappa})]} \le o(\delta)^M$. Thus $\delta \in c$ implies $o(\delta)^{M[j(G_{\kappa})]} < \alpha$. Since $o(\kappa)^M < \alpha$, it follows that $o(\kappa)^{M[j(G_{\kappa})]} < \alpha$, so that $\overline{c}$ is a condition of $j(\mathbb{Q})$, since $\overline{c}$ is a closed, bounded subset of $j(\kappa)$ such that $\delta \in \overline{c}$ implies $o(\delta)^{M[j(G_{\kappa})]} < \alpha$. Thus, diagonalize to get an $M[j(G)]$-generic filter, $g^* \subseteq j(\mathbb{Q})$ which contains condition $\overline{c}$. Then $j''g \subseteq j(g) = g^*$. Therefore, we may lift the embedding to $j:V[G_{\kappa}][g] \to M[j(G_{\kappa})][j(g)]$.  

From the previous lifting arguments will follow that $o(\kappa)^{V[G]} \ge \min\{ o(\kappa)^{V}, \alpha \}$. The lifted embedding $j: V \to M$ was chosen so that $o(\kappa)^{M} = \beta_0 < \min\{o(\kappa)^V, \alpha \}$. Since $\beta_0$ was arbitrary, it follows from Lemma 8 that $o(\kappa)^{V[G]} \ge \min\{o(\kappa)^V, \alpha\}$. All that remains to see is $o(\kappa)^{V[G]} \le \min\{o(\kappa)^V, \alpha \}$. By Lemma 9, which shows Mitchell rank does not go up for $V \subseteq V[G]$, we have $o(\kappa)^{V[G]} \le o(\kappa)^V$. Consider any embedding $j:V[G] \to M[j(G)]$ with critical point $\kappa$ in $V[G]$, and the new club $c \subseteq \kappa$, where $\forall \delta < \kappa$, $\delta \in c$ implies $o(\delta)^V < \alpha$, which is in any normal measure on $\kappa$. By the induction hypothesis, $\delta \in c$ implies $o(\delta)^{V[G]} < \alpha$. Applying $j$ to this statement, gives $\forall \delta < j(\kappa)$, $\delta \in j(c)$ implies $o(\delta)^{M[j(G)]} < \alpha$. Since $\kappa < j(\kappa)$ and $\kappa \in j(c)$, this gives $o(\kappa)^{M[j(G)]} < \alpha$. Thus by Lemma 8, since $j$ was arbitrary, $o(\kappa)^{V[G]} \le \alpha$. Thus, $o(\kappa)^{V[G]} = \min\{o(\kappa)^V, \alpha\}$.
\end{proof}

Let us now consider cases where $o(\kappa) \ge \kappa$. For this we use the concept of representing functions. Suppose $\kappa$ is measurable, $o(\kappa) = \alpha$, and $\alpha \in H_{\theta^+}$. The ordinal $\alpha$ is represented by $f:\kappa \to V_{\kappa}$ if whenever $j:V \to M$ is an elementary embedding with cp$(j) = \kappa$ and $M^{\kappa} \subseteq M$, then $j(f)(\kappa) = \alpha$ (this can be formalized as a first-order statement using extenders) by Hamkins [9]. Let $F:$ ORD $ \to $ ORD. The main theorem shows how to change the Mitchell rank for all measurable cardinals $\gamma$ for which $F \upharpoonright \gamma$ represents $F(\gamma)$ (through any $\gamma$-closed embedding with critical point $\gamma$).

\begin{theorem} For any $V \models $ ZFC + GCH and any $F:$ ORD $\to$ ORD, there is a forcing extension $V[G]$ where, if $\kappa$ is measurable and $F \upharpoonright \kappa$ represents $F(\kappa)$ in $V$, then $o(\kappa)^{V[G]} = \min\{o(\kappa)^V,F(\kappa)\}$.
\end{theorem}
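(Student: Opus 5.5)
The plan is to run the proof of Theorem 11 again, but with the constant bound $\alpha$ replaced by a bound read off from $F$ at each stage. Let $\mathbb{P}$ be the Easton support ORD-length iteration which, at each inaccessible stage $\gamma$, adds a club $c_\gamma \subseteq \gamma$ such that $\delta \in c_\gamma$ implies $o(\delta)^V < F(\delta)$. We force trivially at other stages, and also at stages $\gamma$ where this set is not stationary-enough, i.e. where it is not in every normal measure in the relevant model; to keep the forcing definable we simply add the club only when $\gamma$ is inaccessible and $\{\delta<\gamma : o(\delta)^V<F(\delta)\}$ contains the singulars, so that Lemma 7 applies. Let $G \subseteq \mathbb{P}$ be $V$-generic. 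As in Theorem 11, $\mathbb{P}$ has a closure point at the first inaccessible $\delta_0$. By Hamkins [11], $V \subseteq V[G]$ then has the $\delta_0^+$ approximation and cover properties, and Lemma 9 gives $o(\kappa)^{V[G]} \le o(\kappa)^V$ for every $\kappa$. The forcing beyond stage $\kappa$ has a dense subset closed up to the next inaccessible, so it does not affect $o(\kappa)$, and we may work with $\mathbb{P}_\kappa * \dot{\mathbb{Q}}$ alone.

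For the lower bound, fix a measurable $\kappa$ with $F\upharpoonright\kappa$ representing $F(\kappa)$, and fix $\beta_0 < \min\{o(\kappa)^V, F(\kappa)\}$. By Lemma 8, take the normal ultrapower $j:V\to M$ with $o(\kappa)^M=\beta_0$. It has $M^\kappa\subseteq M$, so by representation $j(F\upharpoonright\kappa)(\kappa)=F(\kappa)$. Hence in $M$ the stage-$\kappa$ forcing of $j(\mathbb{P}_\kappa)$ adds a club of $\delta<\kappa$ with $o(\delta)^M<F(\delta)$. Since $M$ and $V$ agree on $P(P(\delta))$ for $\delta<\kappa$, this is exactly $\mathbb{Q}$, and $j(\mathbb{P}_\kappa)$ factors as $\mathbb{P}_\kappa*\dot{\mathbb{Q}}*\mathbb{P}_{\rm tail}$. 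We then diagonalize verbatim as in Theorem 11: GCH gives $\kappa^+$ dense sets, $M[G_\kappa][g]$ is closed under $\kappa$-sequences, and Lemma 7 gives a $\le\kappa$-closed dense subset. This yields $G^*$ and lifts $j$ to $V[G_\kappa]$. For $j(\mathbb{Q})$ we use the master condition $\bar c=c\cup\{\kappa\}$. It is a condition because $o(\kappa)^{M[j(G_\kappa)]}\le o(\kappa)^M=\beta_0<F(\kappa)=j(F)(\kappa)$, using Lemma 9, and for $\delta\in c$ we have $o(\delta)^{M[j(G_\kappa)]}\le o(\delta)^M=o(\delta)^V<F(\delta)=j(F)(\delta)$. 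Diagonalizing again, the lift has $o(\kappa)^{M[j(G)]}\ge\beta_0$ by applying $j$ to an induction hypothesis as in Lemma 10, so Lemma 8 gives $o(\kappa)^{V[G]}>\beta_0$.

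For the upper bound, the induction hypothesis is that every $\delta<\kappa$ in $c$ has $o(\delta)^{V[G]}\le o(\delta)^V<F(\delta)$. Take any ultrapower $j:V[G]\to M[j(G)]$ with critical point $\kappa$. Since $\kappa\in j(c)$, we get $o(\kappa)^{M[j(G)]}<j(F\upharpoonright\kappa)(\kappa)$. By Hamkins [11], $j$ restricts to an ultrapower $j\upharpoonright V:V\to M$ that is a class of $V$, and we need $M^\kappa\subseteq M$ there. Then $j(F\upharpoonright\kappa)(\kappa)$, computed via $j\upharpoonright V$, equals $F(\kappa)$ by representation, so Lemma 8 gives $o(\kappa)^{V[G]}\le F(\kappa)$. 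Combined with Lemma 9, this gives the equality $o(\kappa)^{V[G]}=\min\{o(\kappa)^V,F(\kappa)\}$.

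I expect the main obstacle to be this last step: checking that $j(F\upharpoonright\kappa)$ is evaluated through a $\kappa$-closed ground-model embedding, so that representation applies. Note that $j(F\upharpoonright\kappa)=(j\upharpoonright V)(F\upharpoonright\kappa)$ because $F\upharpoonright\kappa\in V$. The restriction is a normal ultrapower of $V$ by $\mu\cap V$, and normal ultrapowers are $\kappa$-closed, so this should go through. A secondary worry is the nonstationary case, which the trivial-forcing clause handles.
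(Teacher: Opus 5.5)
\textbf{Relation to the paper.} Apart from your rule for which stages are nontrivial, your argument is the paper's. You use the same Easton-support iteration shooting clubs through $S_\gamma=\{\delta<\gamma : o(\delta)^V<F(\delta)\}$, the same appeal to Lemma 9 via the closure point at $\delta_0$, and the same lift through $\mathbb{P}_\kappa*\dot{\mathbb{Q}}$ with master condition $c\cup\{\kappa\}$. The upper bound is also the paper's: from $\kappa\in j(c)$, together with Hamkins' analysis of $j\upharpoonright V$ to evaluate $j(F\upharpoonright\kappa)(\kappa)=F(\kappa)$. On that last step you do not need $j\upharpoonright V$ to be literally the ultrapower by $\mu\cap V$. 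Representation quantifies over all $\kappa$-closed embeddings, and what the paper takes from Hamkins is exactly that $j\upharpoonright V$ is definable in $V$, has critical point $\kappa$, and satisfies $M^\kappa\subseteq M$.

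\textbf{The gap.} Your extra trivial-forcing clause breaks the proof. The upper bound $o(\kappa)^{V[G]}\le F(\kappa)$ rests entirely on the club $c$ added at stage $\kappa$. If the clause makes that stage trivial, there is no $c$. So the clause does not handle the bad case; it only moves the failure from the lifting to the upper bound.

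It also fires far more often than necessary. A singular $\delta$ lies in $S_\gamma$ iff $F(\delta)\ge 1$ (since $o(\delta)^V=0$), and $S_\gamma=S_{\gamma'}\cap\gamma$ for $\gamma<\gamma'$. Hence a single singular $\delta$ with $F(\delta)=0$ makes every stage above $\delta$ trivial.

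For example, let $F(\aleph_\omega)=0$ and $F(\delta)=1$ for every other $\delta$. Then $F\upharpoonright\kappa$ represents $F(\kappa)=1$ at every measurable $\kappa$. The paper's iteration has $S_\gamma$ omitting only the measurables and the bounded set $\{\aleph_\omega\}$, so the closure argument of Lemma 7 still runs above $\aleph_\omega$, and by the paper's argument it gives $o(\kappa)^{V[G]}=\min\{o(\kappa)^V,1\}$. Your iteration, by contrast, is trivial at every inaccessible stage. So $V[G]=V$, and any $\kappa$ with $o(\kappa)^V\ge 2$ violates the conclusion.

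\textbf{Repair.} Drop the clause and shoot the clubs as the paper does. Make explicit the hypothesis the paper leaves tacit when it invokes Lemma 7: each $S_\gamma$ should contain all singular cardinals below $\gamma$, at least above some bound. This holds, e.g., whenever $F(\delta)\ge 1$ for all singular $\delta$, as in all of the paper's examples.

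For $F$ that make $S_\kappa$ too thin (e.g.\ $F\equiv 0$, where $S_\kappa=\emptyset$), there is no club to add and hence no upper bound, whether or not you force trivially. Such $F$ must be excluded by a hypothesis rather than absorbed into the definition of the iteration.
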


\begin{proof} For example, we can do this for numerous functions satisfying this hypothesis, such as $F(\gamma) = \gamma, F(\gamma) = \gamma + 3, F(\gamma) = \gamma^2 + \omega^2 \cdot 2 + 5$, and any other function which can be defined like this. Suppose $V \models$ ZFC + GCH. Let $F:$ ORD $\to$ ORD. Let $\mathbb{P}$ be an Easton support ORD-length iteration, forcing at inaccessible stages $\gamma$, to add a club $C_{\gamma} \subset \gamma$ such that $\delta \in C_{\gamma}$ implies $o(\delta)^V < F(\delta)$. Let $\delta_0$ denote the first inaccessible cardinal. The forcing up to this stage, $\mathbb{P}_{\delta_0}$, adds nothing and the forcing $\mathbb{Q}$ at stage $\delta_0$ adds a club to $\delta_0$, so that $|\mathbb{P}_{\delta_0}*\dot{\mathbb{Q}}| \le \delta_0$. The forcing past stage $\delta_0$ is closed up to the next inaccessible cardinal. Let $G \subseteq \mathbb{P}$ be $V$-generic. Since $\mathbb{P}$ has a closure point at $\delta_0$, by Lemma 13 in Hamkins [11] it follows thath $V \subseteq V[G]$ satisfies the $\delta_0^+$ approximation and cover properties. Thus by Lemma 9 the Mitchell rank does not go up between $V$ and $V[G]$.

Let $\kappa$ be a measurable cardinal such that $F \upharpoonright \kappa$ represents $F(\kappa) = \alpha$ in $V$. Here we will need that representing functions are still representing functions in the extension. The fact we will use for this is from [Hamkins 11] which says that if $V \subseteq V[G]$ satisfies the hypothesis of the approximation and cover theorem, then every $\kappa$-closed embedding $j:V[G] \to M[j(G)]$ is a lift of an embedding $j \upharpoonright V: V \to M$ that is in $V$ (and $j \upharpoonright V$ is also a $\kappa$-closed embedding).
Thus, if $F \upharpoonright \kappa$ is a representing function for $F(\kappa)$ in the ground model, $(j \upharpoonright V)(F \upharpoonright \kappa)(\kappa) = F(\kappa) = \alpha$, and thus $F \upharpoonright \kappa$ is a representing function for $F(\kappa)$ with respect to such embeddings $j$ in $V[G]$.

The induction hypothesis is that for all $\kappa' < \kappa$ for which $F \upharpoonright \kappa'$ represents $F(\kappa')$ in $V$ has $o(\kappa')^{V[G]} = \min\{F(\kappa'), o(\kappa')^V\}$. By Lemma 8, fix an embedding $j:V \to M$ with cp$(j) = \kappa$ and $o(\kappa)^M = \beta_0 < \min\{o(\kappa)^V,F(\kappa)\}$. Note that $j(F)(\kappa) = j(F\upharpoonright \kappa)(\kappa) = F(\kappa)$. We shall lift $j$ through $\mathbb{P}$. The forcing above $\kappa$ does not affect $o(\kappa)$ since the forcing past stage $\kappa$ is closed up to the next inaccessible. Call $\mathbb{P}_{\kappa}*\dot{\mathbb{Q}}$ the forcing up to and including stage $\kappa$ while we lift the embedding, first through $\mathbb{P}_{\kappa}$ and then through $\mathbb{Q}$. Let $G_{\kappa} \subseteq \mathbb{P}_{\kappa}$ be $V$-generic, and $g \subseteq \mathbb{Q}$ be $V[G_{\kappa}]$-generic. First, we need an $M$-generic filter for $j(\mathbb{P}_{\kappa})$. Since cp$(j) = \kappa$, the forcings $\mathbb{P}_{\kappa}$ and $j(\mathbb{P}_{\kappa})$ agree up to stage $\kappa$. The forcing $\mathbb{Q}$ adds a club $C_{\kappa} \subseteq \kappa$ such that $\delta \in C_{\kappa}$ implies $o(\delta)^V < F(\delta)$. Since $j(F)(\kappa) = F(\kappa)$ and $o(\kappa)^M < F(\kappa)$, the $\kappa$th stage of $j(\mathbb{P}_{\kappa})$ is $\mathbb{Q}$. Thus, $j(\mathbb{P}_{\kappa})$ factors as $\mathbb{P}_{\kappa}*\dot{\mathbb{Q}}*\mathbb{P}_{\mbox{tail}}$ where $\mathbb{P}_{\mbox{tail}}$ is the forcing past stage $\kappa$. Since $G_{\kappa}*g$ is generic for $\mathbb{P}_{\kappa}*\dot{\mathbb{Q}}$, we just need an $M[G_{\kappa}][g]$-generic filter for $\mathbb{P}_{\mbox{tail}}$ which is in 
$V[G_{\kappa}][g]$ (since we want the final embedding to be there). Thus we shall diagonalize to get an $M[G_{\kappa}][g]$-generic filter for $\mathbb{P}_{\mbox{tail}}$ after checking we have met the criteria. Namely, since $|\mathbb{P}_{\kappa}| = |\mathbb{Q}| = \kappa$, the forcing $\mathbb{P}_{\kappa}*\dot{\mathbb{Q}}$ has the $\kappa^+$-chain condition. Since $M^{\kappa} \subseteq M$, it follows [Hamkins 8 (Theorem 54)] that $M[G_{\kappa}][g]^{\kappa} \subseteq M[G_{\kappa}][g]$. Also, since $2^{\kappa} = \kappa^+$, the forcing $\mathbb{P}_{\kappa}$ has $\kappa^+$ many dense sets in $V$. Thus, $\mathbb{P}_{\mbox{tail}}$ has at most $|j(\kappa^+)|^V \le \kappa^{+^{\kappa}} = \kappa^+$ many dense sets in $M[G_{\kappa}][g]$. Also, since $\beta < \kappa$, the forcing $\mathbb{P}_{\kappa}$ has a dense subset which is $\le \beta$-closed (by Lemma 7), it follows that there is a dense subset of $\mathbb{P}_{\mbox{tail}}$ which is $\le \kappa$-closed. Thus, we many diagonalize to get $G^* \subseteq \mathbb{P}_{\mbox{tail}}$ for this dense subset, a generic filter in $V[G_{\kappa}][g]$, which is $M[G_{\kappa}][g]$-generic. Thus, $j(G_{\kappa}) = G_{\kappa}*g*G'$ is $M$-generic for $j(\mathbb{P}_{\kappa})$ and $j''G_{\kappa} \subseteq G_{\kappa}*g*G^*$, so we may lift $j$ to $j: V[G_{\kappa}] \to M[j(G_{\kappa})]$.

Next, we lift $j$ through $\mathbb{Q}$. The forcing $j(\mathbb{Q})$ adds a club $C \subseteq j(\kappa)$ such that $\delta \in C$ implies $o(\delta)^{M[j(G_{\kappa})]} < j(F)(\delta)$. Since $j(G_{\kappa}) \in V[G_{\kappa}][g]$ and $M^{\kappa} \subseteq M$, it folows [Hamkins 8 (Theorem 53)] that $M[j(G_{\kappa})]^{\kappa} \subseteq M[j(G_{\kappa})]$. also, by Lemma 7, there is a dense subset of $j(\mathbb{Q})$ which is $\le \kappa$-closed, and since $|\mathbb{Q}| = \kappa$, it follows that $j(\mathbb{Q})$ has at most $\kappa^+$ many dense subsets in $M[j(G_{\kappa})]$. Let $c = \cup g$ and consider the set $\overline{c} = c \cup \{\kappa\}$. By the construction of the forcing, $\forall \delta < \kappa, \delta \in c$ implies $o(\delta)^V < F(\delta)$. Thus, $\forall \delta < \kappa, \delta \in c$ implies $o(\delta)^M < F(\delta)$. By Lemma 9, it follows that $\delta \in c$ implies $o(\delta)^{M[j(G)]} < F(\delta)$. Since $o(\kappa)^{M[j(G)]} \le o(\kappa)^M = \beta_0 < \min\{o(\kappa)^V, F(\kappa) = \alpha\}$ it follows that $o(\kappa)^{M[j(G)]} < \alpha = j(F)(\kappa)$. Thus $\overline{c}$ is a closed, bounded subset of $j(\kappa)$ such that if $\delta \in \overline{c}$ then $o(\delta)^{M[j(G)]} < j(F)(\delta)$. Thus $\overline{c}$ is a condition of $j(\mathbb{Q})$. Thus, diagonalize to get an $M[j(G)]$-generic filter, $g^* \subseteq j(\mathbb{Q})$ which contains the condition $\overline{c}$, so that $j''g \subseteq j(g) = g^*$. Therefore, we may lift the embedding to $j:V[G_{\kappa}][g] \to M[j(G_{\kappa})][j(g)]$.

From the previous lifting arguments will follow that $o(\kappa)^{V[G]} \ge \min\{o(\kappa)^V, F(\kappa)\}$. The lifted embedding $j: V \to M$ was chosen so that $o(\kappa)^M = \beta_0 < \min\{o(\kappa)^V, F(\kappa)\}$. Since $\beta_0$ was arbitrary, by Lemma 9 it follows that $o(\kappa)^{V[G]} \ge \min\{o(\kappa)^V, F(\kappa)\}$. All that is left is to see that $o(\kappa)^{V[G]} \le \min\{o(\kappa)^V, F(\kappa)\}$. By Lemma 9 which shows that Mitchell rank does not go up for $V \subseteq V[G]$, we have $o(\kappa)^{V[G]} \le o(\kappa)^V$. Consider any embedding $j:V[G] \to M[j(G)]$ in $V[G]$, and the new club $c \subseteq \kappa$ where $\forall \delta < \kappa, \delta \in c$ implies $o(\delta)^V < F(\delta)$. By the induction hypothesis, $\delta \in c$ implies $o(\delta)^{V[G]} < F(\delta)$. Apply $j$ to this stagement to get $\forall \delta < j(\kappa), \delta \in j(c)$ implies $o(\delta)^{M[j(G)]} < j(F)(\kappa) = F(\kappa)$. Note that since $F \upharpoonright \kappa$ represents $F(\kappa)$ in $V[G]$ and $M[j(G)]^{\kappa} \subseteq M[j(G)]$, it follows that $F \upharpoonright \kappa$ represents $F(\kappa)$ in $M[j(G)]$ as well. Since $j$ was arbitrary, $\kappa < j(\kappa)$, and $\kappa \in j(c)$ it now follows by Lemma 8 that $o(\kappa)^{V[G]} \le F(\kappa)$. Thus $o(\kappa)^{V[G]} \le \min\{o(\kappa)^V, F(\kappa)\}$.
\end{proof}

 \vspace{.5 in}

\begin{center}\textbf{SUPERCOMPACT AND STRONGLY COMPACT CARDINALS} \end{center}
\vspace{.2 in}

This section begins with the definitions of supercompact and strongly compact cardinals. The main theorem is about forcing to softly kill to any degree of supercompactness. An uncountable cardinal $\kappa$ is $\theta$-\textit{supercompact} if and only if there is an elementary embedding $j:V \to M$, with critical point $\kappa$ and $M^{\theta} \subseteq M$, where $\kappa < \theta < j(\kappa)$. Equivalently, $\kappa$ is $\theta$-supercompact if and only if there is a normal fine measure on $P_{\kappa}(\theta)$. A \textit{fine measure} is a $\kappa$-complete ultrafilter such that $\forall \sigma \in P_{\kappa} \theta$ the set $\{X \subseteq P_{\kappa} \theta \ | \ \sigma \in X\}$ is in the filter. A cardinal $\kappa$ is \textit{supercompact} if and only if it is $\theta$-supercompact for every $\theta > \kappa$. Notice that $\kappa$ measurable means that $\kappa$ is $\kappa$-supercompact. An uncountable cardinal $\kappa$ is $\theta$-\textit{strongly compact} if and only if there is an elementary embedding $j:V \to M$, with critical point $\kappa$, such that for all $t \subseteq M$, where $|t|^V \le \theta$, there is an $s \in M$ such that $t \subseteq s$ and $|s|^M < j(\kappa)$. Equivalently, a cardinal $\kappa$ is $\theta$-strongly compact for $\kappa < \theta$ if and only if there is a $\kappa$-complete fine measure on $P_{\kappa} \theta$. An uncountable cardinal $\kappa$ is \textit{strongly compact} if and only if it is $\theta$-strongly compact, for every $\theta > \kappa$. Equivalently, an uncountable regular cardinal $\kappa$ is strongly compact if and only if for any set $S$, every $\kappa$-complete filter on $S$ can be extended to a $\kappa$-complete ultrafilter on $S$ [Jech p. 365]. The following theorems show how to softly kill, to any level of supercompactness, to any degree of supercompactness, to any level of strong compactness, to measurable cardinals from supercompact cardinals, to measurable cardinals from strongly compact cardinals, and to strongly compact cardinals from supercompact cardinals.

\begin{theorem} If $\kappa$ is $<\theta$-supercompact, for $\theta \ge \kappa$ regular with $\beta^{<\kappa} < \theta$ for all $\beta < \theta$, then there is a forcing extension where $\kappa$ is $<\theta$-supercompact, but not $\theta$-supercompact, and indeed not even $\theta$-strongly compact.
\end{theorem}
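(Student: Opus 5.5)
This is the Hamkins--Shelah style result, and I would follow the standard route. The plan is to add a non-reflecting stationary subset of $\theta$ with cofinality-$<\kappa$ points, using a forcing that is too closed to destroy $<\theta$-supercompactness. Concretely, I would let $\mathbb{P}$ be the poset whose conditions are bounded subsets $p\subseteq\theta$ consisting of ordinals of cofinality $\omega$ (or more generally of cofinality $<\kappa$). The requirement is that $p\cap\gamma$ is non-stationary in $\gamma$ for every $\gamma\le\sup p$ of uncountable cofinality, and the order is end-extension.

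The first step is to show that $\mathbb{P}$ is $<\theta$-strategically closed. Player II plays by adding sups of the play so far at limit stages, using a club disjoint from the generic set being built. From this, $\mathbb{P}$ adds no new $<\theta$-sequences, and $\theta$ remains regular. Genericity then gives that $S=\bigcup G$ is stationary in $\theta$ but does not reflect at any $\gamma<\theta$.

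The second step preserves $<\theta$-supercompactness. Fix $\lambda<\theta$ and a $\lambda$-supercompactness measure $U$ on $P_\kappa\lambda$ in $V$. Since $\mathbb{P}$ is $\le\lambda$-strategically closed (indeed $<\theta$), it adds no new subsets of $P_\kappa\lambda$ when $\lambda^{<\kappa}<\theta$. This is exactly where the hypothesis $\beta^{<\kappa}<\theta$ for $\beta<\theta$ is used. Hence $U$ remains a normal fine measure in $V[G]$, and $\kappa$ is still $\lambda$-supercompact for every $\lambda<\theta$. In the case $\theta=\kappa$, the statement ``$<\kappa$-supercompact'' is vacuous, or reduces to requiring $\kappa$ to remain inaccessible, which follows from the distributivity.

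The third and main step shows that $\kappa$ is not $\theta$-strongly compact in $V[G]$. Suppose $j:V[G]\to M$ witnesses $\theta$-strong compactness. Since $\theta$ is regular and $\ge\kappa$, there is $s\in M$ with $j''\theta\subseteq s$ and $|s|^M<j(\kappa)$. The key claim is that $\sup j''\theta<j(\theta)$ has $M$-cofinality $<j(\kappa)$, which follows from covering $j''\theta$ by $s$. Meanwhile $j(S)$ is stationary and non-reflecting in $j(\theta)$ in $M$. The standard Solovay/Ketonen argument then takes $\delta=\sup j''\theta$, which has $V[G]$-cofinality $\theta$, and shows that $j(S)\cap\delta$ is stationary in $M$: any club $C\in M$ in $\delta$ pulls back to $\{\alpha:j(\alpha)\in C\}$, which contains an $\omega$-club in $\theta$, and this meets $S$ at some $\alpha$ of cofinality $\omega$, where $j$ is continuous, giving $j(\alpha)\in C\cap j(S)$. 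So $j(S)$ reflects at $\delta<j(\theta)$, contradicting elementarity.

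I expect the main obstacle to be this third step, in particular the precise continuity and club-pullback bookkeeping for merely strongly compact embeddings, where $M$ need not be closed under $\theta$-sequences. I would handle it by working only with $\omega$-cofinal points, where $j$ is continuous since $\mathrm{cp}(j)=\kappa>\omega$. Note also that $\delta<j(\theta)$ requires $j$ to be discontinuous at $\theta$, which holds because $\mathrm{cf}^M(\delta)\le|s|^M<j(\kappa)\le j(\theta)$, while $j(\theta)$ is regular in $M$.
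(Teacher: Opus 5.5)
Your argument is correct, but it takes a different route from the paper. The paper forces with $\mathrm{Add}(\omega,1)*\mathrm{Add}(\theta,1)$ and cites Hamkins--Shelah superdestructibility as a black box for the failure of $\theta$-strong compactness: after small forcing, any $<\kappa$-closed forcing adding a subset of $\theta$ destroys it. It preserves $<\theta$-supercompactness by L\'evy--Solovay for the Cohen real plus the same distributivity observation you make in your second step, namely that no new subsets of $P_\kappa\beta$ appear because $\beta^{<\kappa}<\theta$.

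You instead add a non-reflecting stationary $S\subseteq\theta\cap\mathrm{cof}(\omega)$ directly and refute $\theta$-strong compactness with the Solovay-style reflection argument. That argument is sound:
\begin{itemize}
\item The covering set gives $\delta=\sup j''\theta<j(\theta)$.
\item The pullback of any club $C\in M$ is $\omega$-closed unbounded in $\theta$. This uses an interleaving argument, continuity of $j$ at countable cofinalities, and absoluteness of closedness of $C$ (no sequence needs to lie in $M$).
\item Since $\mathrm{cf}^M(\delta)\ge\mathrm{cf}^{V[G]}(\delta)=\theta>\omega$, the non-reflection of $j(S)$ in $M$ really does apply at $\delta$.
\end{itemize}

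Each route buys something different. Yours is self-contained and adds no real, indeed no new subset of any $\beta<\theta$. It also leaves a non-reflecting stationary set in the extension, which kills any property of $\kappa$ that implies stationary reflection at $\theta$. The paper's route is shorter given the citation and more robust: after the Cohen real, \emph{any} $<\kappa$-closed forcing adding a subset of $\theta$ works, with no need to design a specific poset.

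One small point of phrasing in your strategic-closure sketch: player II should use the limit suprema as the tops of the conditions but keep them out of the set itself. Those suprema form the club witnessing non-stationarity, and they may have uncountable cofinality, so they are not even allowed in $S$.
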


\begin{proof} Suppose $\kappa$ is $<\theta$-supercompact for some regular $\theta > \kappa$, and let $\mathbb{P} = \mbox{Add}(\omega, 1)*\mbox{Add}(\theta,1)$. By Hamkins and Shelah [13], after forcing with $\mathbb{P}$, the cardinal $\kappa$ is not $\theta$-supercompact. However, adding a Cohen real is small relative to $\kappa$, so the first factor of $\mathbb{P}$ preserves that $\kappa$ is $<\theta$-supercompact. The second factor of $\mathbb{P}$ is $<\theta$-closed, so that a normal fine measure on $P_{\kappa} \beta$, where $\beta < \theta$, is still a normal fine measure on $P_{\kappa} \beta$, since no new subsets of $\beta$ were added. Let $G \subseteq \mathbb{P}$ be $V$-generic. Thus, in $V[G]$, the cardinal $\kappa$ is $<\theta$-supercompact, but not $\theta$-supercompact. 
\end{proof}

\begin{corollary} If $\kappa$ is $\theta$-supercompact where $\kappa < \theta$ and $\theta^{<\kappa} = \theta$, then there is a forcing extension where $\kappa$ is $\theta$-supercompact, but not $\theta^+$-supercompact.
\end{corollary}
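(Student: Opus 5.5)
The plan is to apply the preceding theorem with $\theta^+$ in place of $\theta$. The preceding theorem needs three things about $\theta^+$: that it is regular, that $\beta^{<\kappa} < \theta^+$ for every $\beta < \theta^+$, and that $\kappa$ is $<\theta^+$-supercompact. It then produces a forcing extension (namely by $\mathrm{Add}(\omega,1)*\mathrm{Add}(\theta^+,1)$) in which $\kappa$ is $<\theta^+$-supercompact but not $\theta^+$-supercompact. Since $<\theta^+$-supercompactness is exactly $\theta$-supercompactness (every $\beta<\theta^+$ satisfies $\beta\le\theta$, and $\theta$-supercompactness implies $\beta$-supercompactness for $\beta\le\theta$ by projecting the normal fine measure), this gives the corollary.

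First, $\theta^+$ is a successor cardinal, hence regular, and $\theta^+ > \kappa$.

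Second, for the cardinal arithmetic, fix $\beta < \theta^+$. Then $|\beta| \le \theta$, so
\[
\beta^{<\kappa} \le \theta^{<\kappa} = \theta < \theta^+ ,
\]
using the hypothesis $\theta^{<\kappa}=\theta$.

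Third, $\kappa$ is $<\theta^+$-supercompact. For each $\beta$ with $\kappa\le\beta\le\theta$, project a normal fine measure $U$ on $P_\kappa\theta$ to $P_\kappa\beta$ via $\sigma\mapsto\sigma\cap\beta$. Equivalently, use the same embedding $j$, since $M^\theta\subseteq M$ implies $M^\beta\subseteq M$.

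Now invoke the previous theorem to get the extension $V[G]$, in which $\kappa$ is $<\theta^+$-supercompact, hence $\theta$-supercompact, but not $\theta^+$-supercompact. In fact it is not even $\theta^+$-strongly compact.

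The only delicate point is confirming that the previous theorem's preservation argument really yields $\theta$-supercompactness, i.e. the level $\beta=\theta$ itself, in $V[G]$. The Cohen real is small forcing, so it preserves $\theta$-supercompactness by the usual Lévy–Solovay lifting. $\mathrm{Add}(\theta^+,1)$ is $\le\theta$-closed, so it adds no new subsets of $P_\kappa\theta$, and it adds no $\theta$-sequences that could break $M^\theta\subseteq M$. Therefore the normal fine measure on $P_\kappa\theta$ from $V[\mathrm{Cohen}]$ remains a normal fine measure in $V[G]$. This is the step I would check explicitly rather than merely cite, since the $<\theta^+$ phrasing hides it. Everything else is routine.
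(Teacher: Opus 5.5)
Your proposal is correct and matches the paper's intended derivation: the corollary is the preceding theorem applied with $\theta^+$ in place of $\theta$, after checking that $\theta^+$ is regular, that $\beta^{<\kappa}\le\theta^{<\kappa}=\theta<\theta^+$, and that $\kappa$ is $<\theta^+$-supercompact, just as you did. Your extra check that the top level $\beta=\theta$ survives is the same closure argument the paper uses in proving that theorem. Note that it needs no new subsets of $P_\kappa\theta$ to be added, not merely no new subsets of $\theta$, and this is where $|P_\kappa\theta|=\theta^{<\kappa}=\theta$ is used.
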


\begin{theorem} If $\kappa$ is $<\theta$-strongly compact, for $\theta \ge \kappa$ regular with $\beta^{<\kappa} < \theta$ for all $\beta < \theta$, then there is a forcing extension where $\kappa$ is $<\theta$-strongly compact, but not $\theta$-strongly compact. 
\end{theorem}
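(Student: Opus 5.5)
The forcing will be the same as in the preceding theorem: $\mathbb{P} = \mbox{Add}(\omega,1)*\mbox{Add}(\theta,1)$. Let $G = g_0 * H$ be $V$-generic. Two things must be checked: $\kappa$ remains $\beta$-strongly compact for every $\beta<\theta$, and $\kappa$ is not $\theta$-strongly compact in $V[G]$. The second is the real content. It should come from the Hamkins--Shelah argument [13], which in fact shows that after adding a Cohen real and then a Cohen subset of a regular $\theta$, $\kappa$ is not even $\theta$-strongly compact. The preceding theorem already asserts this for supercompact $\kappa$, and the argument does not use supercompactness in the ground model.

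\textbf{Preservation below $\theta$.} $\mbox{Add}(\omega,1)$ is small relative to $\kappa$, so by the Lévy--Solovay argument every $\kappa$-complete fine measure $U$ on $P_\kappa\beta$ in $V$ generates a $\kappa$-complete fine measure in $V[g_0]$. Every new subset of $P_\kappa\beta$ is then sandwiched, modulo countably many names, by sets in $U$. Next, $\mbox{Add}(\theta,1)$ is $<\theta$-closed in $V[g_0]$, using $\theta$ regular. The hypothesis $\beta^{<\kappa}<\theta$ gives $|P_\kappa\beta|<\theta$, so this forcing adds no new subsets of $P_\kappa\beta$ and no new $<\kappa$-sequences of such subsets. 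Hence a $\kappa$-complete fine measure on $P_\kappa\beta$ in $V[g_0]$ stays one in $V[G]$. So $\kappa$ is $<\theta$-strongly compact in $V[G]$.

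\textbf{Killing $\theta$-strong compactness.} Suppose toward a contradiction that $j:V[G]\to M[j(G)]$ witnesses $\theta$-strong compactness, say as the ultrapower by a fine $\kappa$-complete measure on $P_\kappa\theta$. By the Hamkins gap forcing/approximation and cover results [11], the Cohen real gives a gap at $\omega_1$. Therefore $j\restriction V: V\to M$ is definable in $V$, and $M\subseteq V$. The Cohen subset $A=\bigcup H\subseteq\theta$ satisfies $j(A)\cap\theta = A$, so $A\in M[j(G)]$. The hard step is to show that $\theta$-strong compactness puts $A$ into a model that is approximated by $V$. The covering property of the embedding says every $\theta$-sized subset of $M[j(G)]$ is covered by a set of $M$-size $<j(\kappa)$. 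Combining this with the $\delta$-approximation property of $M\subseteq M[j(G)]$ for $\delta=\omega_1$, I would argue that every initial segment of $A$ lies in $V[g_0]$. Then, as in Hamkins--Shelah, $A$ itself is approximated by $V[g_0]$, and the approximation property gives $A\in V[g_0]$. This contradicts genericity of $H$ over $V[g_0]$.

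The main obstacle is this last transfer. In the supercompact case, closure $M[j(G)]^\theta\subseteq M[j(G)]$ immediately gives $A\in M[j(G)]$ with $M\subseteq V$. With only strong compactness, I would instead use a covering set $s\in M$ with $j''\theta\subseteq s$ and $|s|^M<j(\kappa)$ to decode $A$ from $j(A)\cap s$. I would also check that the Hamkins--Shelah argument in [13] is already stated for strong compactness, which I believe it is, so that it can simply be cited.
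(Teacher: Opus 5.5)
Your overall plan is the paper's. You use the same forcing $\mbox{Add}(\omega,1)*\mbox{Add}(\theta,1)$, L\'evy--Solovay for the Cohen real, $<\theta$-closure for the second factor, and Hamkins--Shelah [13] for the failure of $\theta$-strong compactness. Your preservation step is sharper than the paper's wording: the point is that no new subsets of $P_\kappa\beta$ are added, which is exactly where $\beta^{<\kappa}<\theta$ is used. If you simply cite [13] for the killing step, as the paper does, you are done.

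The self-contained sketch you offer for the killing step would fail as written.

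First, $j(A)\cap\theta=A$ is false when $\theta>\kappa$. The embedding fixes only ordinals below $\kappa$, so you get $j(A)\cap\kappa=A\cap\kappa$, and above that only $\alpha\in A\iff j(\alpha)\in j(A)$. Recovering $A$ therefore needs $j\upharpoonright\theta$, so $A\in M[j(G)]$ is unjustified for a strong compactness embedding.

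Second, the initial-segment route proves nothing. Every initial segment of $A$ lies in $V[g_0]$ simply by $<\theta$-closure of $\mbox{Add}(\theta,1)$, with no embedding involved. Moreover, $V[g_0]\subseteq V[g_0][H]$ has no approximation property of the kind you need; $A$ itself is a counterexample. The $\omega_1$-approximation property holds for $V\subseteq V[G]$, but $A$ is not approximated by $V$, since by density some countable piece of $A$ codes $g_0$.

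What works is the decoding idea from your last paragraph, carried out fully.
\begin{enumerate}
\item Take $s\in M[j(G)]$ with $j''\theta\subseteq s$ and $|s|^{M[j(G)]}<j(\kappa)$.
\item Since $j(H)$ is $M[g_0]$-generic for $<j(\theta)$-closed forcing, and Cohen forcing has the cover property, you may take $s\in M$.
\item The same closure gives $j(A)\cap s\in M[g_0]$, and $M[g_0]\subseteq V[g_0]$ because gap forcing gives $M\subseteq V$.
\item Gap forcing also gives $j\upharpoonright\theta\in V$. Hence $A=\{\alpha<\theta : j(\alpha)\in j(A)\cap s\}\in V[g_0]$, contradicting the genericity of $H$ over $V[g_0]$.
\end{enumerate}
This argument never needs $A\in M[j(G)]$.
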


\begin{proof} Suppose $\kappa$ is $<\theta$-strongly compact for regular $\theta \ge \kappa$, and let $\mathbb{P} = \mbox{Add}(\omega,1)*\mbox{Add}(\theta,1)$. By Hamkins and Shelah [13], this forcing destroys the $\theta$-strong compactness of $\kappa$. However, adding a Cohen real is small relative to $\kappa$, so the first factor preserves that $\kappa$ is $<\theta$-strongly compact. Also, since the second factor is $<\theta$-closed, andy $\kappa$-complete fine measure on $P_{\kappa}\beta$, where $\beta < \theta$, is still a $\kappa$-complete fine measure since there are no new subsets of $\beta$ to measure. Thus, $\mathbb{P}$ preserves that $\kappa$ is $<\theta$-strongly compact. Let $G \subseteq \mathbb{P}$ be $V$-generic. Then, in $V[G]$, the cardinal $\kappa$ is $<\theta$-strongly compact, but not $\theta$-strongly compact.
\end{proof}

\begin{corollary} If $\kappa$ is $\theta$-strongly compact, where $\kappa \le \theta$ and $\theta^{<\kappa} = \theta$, then there is a forcing extension where $\kappa$ is $\theta$-strongly compact, but not $\theta^+$-strongly compact.
\end{corollary}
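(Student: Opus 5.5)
This corollary follows from the preceding theorem on $<\theta$-strong compactness, applied with $\theta^+$ in place of $\theta$. The plan is to check the hypotheses of that theorem for $\theta^+$ and then read off the conclusion.

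First, the regularity and size hypotheses. Since $\theta \ge \kappa$, the successor $\theta^+$ is regular and $\theta^+ \ge \kappa$. Next, every $\beta < \theta^+$ must satisfy $\beta^{<\kappa} < \theta^+$. For $\beta < \theta^+$ we have $|\beta| \le \theta$, so
\[
\beta^{<\kappa} \le \theta^{<\kappa} = \theta < \theta^+ .
\]
(For finite or small $\beta$ the bound is trivial.)

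Second, $\kappa$ must be $<\theta^+$-strongly compact. This means that for every $\beta$ with $\kappa \le \beta < \theta^+$ there is a $\kappa$-complete fine measure on $P_\kappa\beta$. We have such a measure $U$ on $P_\kappa\theta$. For $\beta \le \theta$, project $U$ via $\sigma \mapsto \sigma \cap \beta$ to get a $\kappa$-complete fine measure on $P_\kappa\beta$. For $\theta \le \beta < \theta^+$, fix a bijection $\pi:\theta\to\beta$ and push $U$ forward along $\sigma\mapsto\pi''\sigma$. Fineness is preserved: the image of each $\sigma$ has size $<\kappa$, and for any $\tau \in P_\kappa\beta$ we have $\pi^{-1}{}''\tau \in P_\kappa\theta$, so the set of $\sigma$ containing it is in $U$. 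So $\kappa$ is $\le\theta$- and hence $<\theta^+$-strongly compact.

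Applying the theorem now yields a forcing extension in which $\kappa$ is $<\theta^+$-strongly compact but not $\theta^+$-strongly compact. Being $<\theta^+$-strongly compact includes $\theta$-strong compactness, since $\theta<\theta^+$. This gives exactly the statement.

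The only step needing care is the transfer of fine measures across $|\beta|=\theta$ via a bijection. This is routine: strong compactness measures on $P_\kappa\beta$ depend only on $|\beta|$, with no normality to preserve. Unlike the supercompact corollary, no normality issue arises here.
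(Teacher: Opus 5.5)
Your proposal is correct and is the intended derivation: the paper gives no separate proof of this corollary, which is the instance $\theta^+$ of the preceding theorem on $<\theta$-strong compactness (forcing with $\mbox{Add}(\omega,1)*\mbox{Add}(\theta^+,1)$). You also check the hypotheses the paper leaves implicit: regularity of $\theta^+$, the bound $\beta^{<\kappa}\le\theta^{<\kappa}=\theta<\theta^+$, and that $\theta$-strong compactness gives $<\theta^+$-strong compactness by projecting the fine measure or transferring it along a bijection.
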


\begin{corollary} If $\kappa$ is measurable, then there is a forcing extension where $\kappa$ is measurable but not strongly compact. 
\end{corollary}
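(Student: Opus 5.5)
This will be a direct application of the preceding Theorem on $<\theta$-strong compactness, taking $\theta = \kappa^+$. The point is that measurability of $\kappa$ is exactly $\kappa$-supercompactness, and hence $\kappa$-strong compactness. So "$\kappa$ is $<\kappa^+$-strongly compact" holds as soon as $\kappa$ is measurable. In the forcing extension we then want $\kappa$ to stay measurable but fail to be $\kappa^+$-strongly compact, hence fail to be strongly compact.

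First I check the hypotheses of the Theorem with $\theta = \kappa^+$. The cardinal $\kappa^+$ is regular. The condition $\beta^{<\kappa} < \kappa^+$ for all $\beta < \kappa^+$ reduces to $\kappa^{<\kappa} = \kappa$, which holds since $\kappa$ is measurable and therefore inaccessible. Being $<\kappa^+$-strongly compact means being $\beta$-strongly compact for all $\beta < \kappa^+$. For $\beta \le \kappa$ this follows from a normal measure on $\kappa$. For $\kappa < \beta < \kappa^+$ we have $|\beta| = \kappa$, so a bijection $\beta \cong \kappa$ transfers a fine measure on $P_\kappa\kappa$ to one on $P_\kappa\beta$; a $\kappa$-complete fine measure on $P_\kappa\kappa$ comes from the normal measure via $\alpha \mapsto \alpha$.

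Next I force with $\mathbb{P} = \mbox{Add}(\omega,1)*\mbox{Add}(\kappa^+,1)$ as in the proof of the preceding Theorem. In $V[G]$, $\kappa$ is $<\kappa^+$-strongly compact but not $\kappa^+$-strongly compact, and therefore not strongly compact.

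The remaining step is that $\kappa$ is still measurable in $V[G]$. The Cohen real is small forcing, so by Lévy–Solovay $\kappa$ stays measurable. $\mbox{Add}(\kappa^+,1)$ is $\le\kappa$-closed, so it adds no subsets of $\kappa$ and no $\kappa$-sequences. Hence any normal measure on $\kappa$ in $V[\mbox{Add}(\omega,1)]$ remains a $\kappa$-complete ultrafilter afterwards. (Equivalently, $\kappa$-strong compactness of $\kappa$ is just measurability.)

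The only delicate point is this preservation of measurability together with making sure the Hamkins–Shelah obstruction applies at $\theta = \kappa^+$; both are supplied by the cited Theorem and its proof, so I expect no real obstacle.
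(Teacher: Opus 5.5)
Your argument is correct and is the argument the paper intends: force with $\mbox{Add}(\omega,1)*\mbox{Add}(\kappa^+,1)$, use Hamkins--Shelah to kill $\kappa^+$-strong compactness, and keep measurability because small forcing followed by $\le\kappa$-closed forcing preserves it. Note that the paper's one-line proof says to take $\theta=\kappa$ in the $<\theta$-supercompactness theorem, which read literally forces with $\mbox{Add}(\kappa,1)$ after a Cohen real; by superdestructibility that kills the measurability of $\kappa$, and the hypothesis $\beta^{<\kappa}<\theta$ already fails at $\beta=2$. So your instantiation, $\theta=\kappa^+$ (equivalently $\theta=\kappa$ in the corollary on $\theta$-strong compactness), is the correct reading.
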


\begin{proof} Let $\theta = \kappa$ in the proof of Theorem 13.
\end{proof}

The following theorem is due to Magidor [17]. It shows how to softly kill a supercompact cardinal so that it is still strongly compact in the forcing extension.

\begin{theorem} (Magidor) If $\kappa$ is strongly compact, then there is a forcing extension where $\kappa$ is strongly compact, but not supercompact.
\end{theorem}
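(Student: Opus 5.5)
Magidor's theorem asks us to destroy supercompactness of a strongly compact $\kappa$ while keeping strong compactness. The plan is to use Magidor's original method: iterated Prikry forcing. First we may assume $\kappa$ is not already non-supercompact; if it is, there is nothing to do. So assume $\kappa$ is supercompact, or at least that supercompactness holds. In the ground model let $A$ be the set of measurable cardinals $\delta<\kappa$ which are limits of measurable cardinals. Let $\mathbb{P}$ be the Magidor iteration of Prikry forcing of length $\kappa$: at each measurable $\delta$ in a suitably chosen set (for instance every measurable $\delta<\kappa$ that is not a limit of measurables, or simply every measurable below $\kappa$), add a Prikry sequence to $\delta$ using a fixed normal measure $U_\delta\in V$. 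Supports are full (Magidor style), with the Prikry ordering on the direct-extension components. Then we verify two things in $V[G]$.

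\textbf{Step 1: supercompactness is killed.} Every measurable $\delta<\kappa$ of the chosen kind becomes singular of cofinality $\omega$. We want to show no new measurables appear below $\kappa$ in the relevant set, using the Prikry property together with closure of direct extensions. Suppose some $j:V[G]\to M$ witnessed $2^\kappa$-supercompactness. Then $\kappa$ would be measurable in $M$ with a normal measure in $M$. Reflection would give, below $\kappa$, measurable cardinals that are non-limits of measurables (in the first variant) or, more crudely, measurable cardinals at all. But all such ground-model measurables have been singularized, and the Prikry property shows the forcing creates no new measurables. Concretely, a normal measure on $\kappa$ in $V[G]$ must concentrate on non-measurables. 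Supercompactness yields a normal measure concentrating on measurables (as $\kappa$ is measurable in $M$, which is closed enough to contain a measure on $\kappa$). This is a contradiction. The measure-one set argument parallels the final part of the measurable-cardinal theorems above.

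\textbf{Step 2: strong compactness is preserved.} This is the main obstacle. Fix $\lambda\ge\kappa$ and a supercompactness embedding $j:V\to N$ with $N^\lambda\subseteq N$. Instead of lifting $j$ itself (impossible, since $\kappa$ is measurable in $N$ but stage $\kappa$ would need a Prikry sequence), we compose. Let $i:N\to N'$ be the ultrapower of $N$ by a normal measure on $\kappa$ of Mitchell order $0$ in $N$, and set $k=i\circ j$. In $N'$, $\kappa$ is not measurable, so in $k(\mathbb{P})$ no forcing occurs at stage $\kappa$. Between $\kappa$ and $k(\kappa)$ the forcing is the Prikry iteration of $N'$. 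We build an $N'$-generic for $k(\mathbb{P})$ extending $k''G$ by using the Prikry property and the fact that $N'$ has few dense sets relative to strong closure of the direct-extension order, as in Magidor's argument with the "nice" conditions. Fineness follows because $k''\lambda$ is covered by some $s\in N'$ with $|s|^{N'}<k(\kappa)$, namely $i(j''\lambda)$. This gives a $\kappa$-complete fine measure on $P_\kappa\lambda$ in $V[G]$.

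I expect the delicate point to be constructing the generic over $N'$ for the tail past $\kappa$. Magidor handles it by showing that every dense open set of the tail, evaluated in $N'$, has a direct extension meeting it. I would follow that, reducing via the Prikry property to deciding statements by direct extensions and using $\kappa$-closure of the direct extension order, rather than the diagonalization used earlier in this paper.
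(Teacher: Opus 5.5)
Your overall route matches the paper's. The paper cites Magidor's theorem that $\kappa$ can be forced to be the least measurable cardinal while remaining strongly compact, and notes that the least measurable is not supercompact. Your variant that singularizes \emph{every} measurable below $\kappa$ is that construction. Your case split assuming $\kappa$ supercompact is sensible, since Magidor's argument uses supercompactness embeddings.

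However, the step you flag as the main obstacle fails as written. You cannot build, in $V[G]$, an $N'$-generic filter for $k(\mathbb{P})$ extending $k''G$. The tail of $k(\mathbb{P})$ above $\kappa$ Prikry-forces at every $N'$-measurable in $(\kappa,k(\kappa))$, in particular at $i(\kappa)$, which is measurable in $N'$ because $\kappa$ is measurable in $N$. Such a filter would therefore put into $V[G]$ an $\omega$-sequence cofinal in $i(\kappa)$. But $i(\kappa)$ is regular in $N'$ and $N'$ is closed under $\kappa$-sequences, so $\mathrm{cf}^V(i(\kappa))>\kappa$. This cofinality survives to $V[G]$, since $\mathbb{P}$ has the $\kappa^+$-c.c. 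No counting of dense sets against closure of $\le^*$ can get around this. The principle you attribute to Magidor, that every dense open subset of the tail is met by a direct extension, is also false. Already for a single Prikry forcing, the dense set of conditions with nonempty stem contains no direct extension of a pure condition.

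The missing idea is that Magidor never lifts $k$; he defines the measure directly in $V[G]$. Since $\kappa$ is not measurable in $N'$, $k(\mathbb{P})$ factors as $\mathbb{P}*\dot{\mathbb{P}}_{\mathrm{tail}}$ with $k(p)\upharpoonright\kappa=p$. One then sets $\dot X^G\in W$ iff there are $p\in G$ and $q\in N'[G]$ with $q\le^* k(p)\upharpoonright(\kappa,k(\kappa))$ and $q\Vdash i(j''\lambda)\in k(\dot X)$. Each property of $W$ then comes from a standard ingredient:
\begin{itemize}
\item the Prikry property of $\mathbb{P}_{\mathrm{tail}}$ in $N'[G]$ makes $W$ an ultrafilter;
\item compatibility of direct extensions of a common condition makes the definition consistent;
\item closure of $\le^*$ on the tail beyond $\kappa$ gives $\kappa$-completeness;
\item $k''\lambda\subseteq i(j''\lambda)$ gives fineness.
\end{itemize}
Your last sentence gestures at this, but it must replace the generic-filter construction, not support it.

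Three smaller points.
\begin{itemize}
\item Drop the ``non-limits of measurables'' variant. Once the lower measurables are singularized, a $V$-measurable limit of measurables that survives becomes a non-limit of measurables in $V[G]$, so your reflection argument yields no contradiction.
\item The stage-$\delta$ Prikry forcing must use a normal measure in $V^{\mathbb{P}_\delta}$ extending a Mitchell-order-$0$ measure $U_\delta$, not $U_\delta\in V$ itself, because $\mathbb{P}_\delta$ adds subsets of $\delta$.
\item That no new measurables appear below $\kappa$ is a separate lemma of Magidor's, not an immediate consequence of the Prikry property.
\end{itemize}
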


A word on the proof of the theorem. Suppose $\kappa$ is a strongly compact cardinal. Then, there is a forcing extension, $V[G]$, where $\kappa$ is the least strongly compact cardinal, and the least measurable cardinal [17]. However, the least measurable cardinal can never be supercompact, since there are many measurable cardinals below any supercompact cardinal. Thus, in $V[G]$, the cardinal $\kappa$ is strongly compact, but not supercompact. 

As for measurable cardinals, an analogous rank to Mitchell rank can be assigned to suprcompactness embeddings. Suppose a cardinal $\kappa$ is $\theta$-supercompact for fixed $\theta$. If $\mu$ and $\nu$ are normal fine measures on $P_{\kappa}\theta$, define the Mitchell relation $\mu \vartriangleleft_{\theta-sc} \nu$ if and only if $\mu \in M_{\nu}$ where $j:V \to M_{\nu}$ is an ultrapower embedding by $\nu$. Since the relation $\vartriangleleft_{\theta-sc}$ is well-founded for given $\kappa$ and $\theta$, the Mitchell rank of $\mu$ is its rank with respect to $\vartriangleleft_{\theta-sc}$. Thus for fixed $\kappa$ and $\theta$, the notation $o_{\theta-sc}(\kappa) = \alpha$ means the height of $\vartriangleleft_{\theta-sc}$ on normal fine measures on $P_{\kappa}\theta$ is $\alpha$. By definition, $o_{\theta-sc}(\kappa)$ is the height of the well-founded Mitchell relation $\vartriangleleft_{\theta-sc}$ on $m(\kappa)$, the collection of normal fine measures on $P_{\kappa}\theta$. Thus, the definition of rank for $\mu \in m(\kappa)$ with respect to $\vartriangleleft_{\theta-sc}$ is $o_{\theta-sc}(\mu) = \sup\{o_{\theta-sc}(\nu) + 1 \ | \ \nu \vartriangleleft_{\theta-sc} \mu \}$. Thus $o_{\theta-sc}(\kappa) = \sup\{o_{\theta-sc}(\mu) + 1 \ | \ \mu \in m(\kappa) \}$. So that $o_{\theta-sc}(\kappa) = 0$ means that $\kappa$ is not $\theta$-supercompact, and $o_{\theta-sc}(\kappa) > 0$ means that $\kappa$ is $\theta$-supercompact. 

The main theorems of this section will rely on the following lemmas about Mitchell rank for supercompactness and are analogues of Lemmas 8, 9, and 10 for Mitchell rank.

\begin{lemma} If $\kappa$ is a $\theta$-supercompact cardinal, and $\mu$ is a normal fine measure on $P_{\kappa}\theta$, and $j_{\mu} : V \to M_{\mu}$ is the ultrapower embedding by $\mu$ with critical point $\kappa$, then $o_{\theta-sc}(\mu) = o_{\theta-sc}(\kappa)^{M_{\mu}}$. Thus, $\beta < o_{\theta-sc}(\kappa)$ if and only if there exists $j : V \to M$ elementary embedding with critical point $\kappa$, and $M^{\theta} \subseteq M$, with $o_{\theta-sc}(\kappa)^M = \beta$.
\end{lemma}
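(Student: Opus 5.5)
I will mirror the proof of Lemma 8, replacing $\kappa$-closure of a normal measure ultrapower by $\theta$-closure of a normal fine measure ultrapower. The plan has three steps: the identity $o_{\theta-sc}(\mu)=o_{\theta-sc}(\kappa)^{M_\mu}$, then the forward direction, then the converse.

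\textbf{Step 1: absoluteness of the relevant objects to $M_\mu$.} I first show that $M_\mu$ computes the measures and the Mitchell relation correctly. Since $\mu$ is normal and fine on $P_\kappa\theta$, the standard facts give $M_\mu^{\theta}\subseteq M_\mu$ and $j_\mu''\theta\in M_\mu$. Hence $P(P_\kappa\theta)\subseteq M_\mu$, because $2^{\theta^{<\kappa}}$-many subsets are coded by $\theta$-sequences.

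\begin{itemize}
\item[(a)] I expect this to be the main obstacle. Each set $X\subseteq P_\kappa\theta$ lies in $M_\mu$, but $P(P_\kappa\theta)$ has size $2^{\theta^{<\kappa}}>\theta$. So I will argue instead that $P(P_\kappa\theta)^{M_\mu}=P(P_\kappa\theta)$. Then a measure $\nu\in M_\mu$ on $P_\kappa\theta$ is normal and fine in $M_\mu$ exactly when it is in $V$, since normality quantifies over functions $f:P_\kappa\theta\to\theta$, of which $M_\mu$ has all.
\item[(b)] Next I check that for $\nu,\nu'\in M_\mu$, the statement $\nu'\vartriangleleft_{\theta-sc}\nu$ is absolute between $M_\mu$ and $V$. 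For this I need $M_\nu^{M_\mu}$ and $M_\nu$ to agree enough to decide membership of $\nu'$. Both are built from functions $f:P_\kappa\theta\to V_\lambda$ for suitably large $\lambda$. The clean route is to note that $M_\mu$ contains every function $P_\kappa\theta\to P(P_\kappa\theta)$, since such a function is a $\theta^{<\kappa}$-sequence. I will first use $\theta^{<\kappa}=\theta$, or else work with $(\theta^{<\kappa})$-closure, which I will verify holds for normal fine ultrapowers. The ultrapowers then agree on the relevant initial segment containing $\nu'$.
\item[(c)] From (b) the rank function $o_{\theta-sc}$ restricted to the measures in $M_\mu$ is the same in both models, by induction on rank. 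By definition, $\nu\in M_\mu$ exactly when $\nu\vartriangleleft_{\theta-sc}\mu$. The displayed computation from Lemma 8 then goes through verbatim:
\[
o_{\theta-sc}(\kappa)^{M_\mu}=\sup\{o_{\theta-sc}(\nu)+1 \mid \nu\vartriangleleft_{\theta-sc}\mu\}=o_{\theta-sc}(\mu).
\]
\end{itemize}

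\textbf{Step 2: the forward direction.} Suppose $\beta<o_{\theta-sc}(\kappa)$. Because $o_{\theta-sc}(\kappa)$ is the supremum of $o_{\theta-sc}(\mu)+1$ and ranks in a well-founded relation fill an initial segment, there is some $\mu$ with $o_{\theta-sc}(\mu)=\beta$. Then $j_\mu$ is the required embedding: it has critical point $\kappa$ and $M_\mu^\theta\subseteq M_\mu$, and Step 1 gives $o_{\theta-sc}(\kappa)^{M_\mu}=\beta$.

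\textbf{Step 3: the converse.} Suppose $j:V\to M$ has critical point $\kappa$, $M^\theta\subseteq M$, and $o_{\theta-sc}(\kappa)^M=\beta$. Let $\mu$ be the induced normal fine measure, with $X\in\mu$ iff $j''\theta\in j(X)$; here $j''\theta\in M$ by closure. The factor map $k:M_\mu\to M$, given by $k([f])=j(f)(j''\theta)$, satisfies $j=k\circ j_\mu$.

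\begin{itemize}
\item[(a)] I will show that the critical point of $k$ exceeds $(2^{\theta^{<\kappa}})^{M_\mu}$. Failing that, I will show directly that $M$ and $M_\mu$ have the same subsets of $P_\kappa\theta$, the same normal fine measures on it, and the same relation $\vartriangleleft_{\theta-sc}$ among them. Both are $\theta$-closed, so both contain all of $P(P_\kappa\theta)$, and Step 1(b) applies to each.
\item[(b)] It follows that $o_{\theta-sc}(\kappa)^{M_\mu}=o_{\theta-sc}(\kappa)^M=\beta$.
\item[(c)] Step 1 then gives $o_{\theta-sc}(\mu)=\beta$, so $o_{\theta-sc}(\kappa)\ge\beta+1>\beta$.
\end{itemize}
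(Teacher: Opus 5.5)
Your Steps 1 and 2 are the paper's argument and are sound once two points are made precise.

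First, in 1(b) the relevant functions are $f\colon P_\kappa\theta\to V_\kappa$, not functions into $P(P_\kappa\theta)$. This is because a measure $\nu'$ on $P_\kappa\theta$ lies in $M_\nu$ iff it lies in $j_\nu(V_\kappa)$. These functions, like all subsets of $P_\kappa\theta$, belong to $M_\mu$ simply because $f(\sigma)=j_\mu(f)(j_\mu''\sigma)$ and $X=\{\sigma : j_\mu''\sigma\in j_\mu(X)\}$ are computable from $j_\mu\upharpoonright\theta\in M_\mu$. So the ``obstacle'' in 1(a) is not one.

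Second, the induction in 1(c) also needs every $\vartriangleleft_{\theta-sc}$-predecessor of a $\nu\in M_\mu$ to lie in $M_\mu$. This follows from the same agreement $j_\nu(V_\kappa)=j^{M_\mu}_\nu(V_\kappa)$.

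The genuine gap is Step 3(a), and it cannot be repaired. $\theta$-closure makes $M$ and $M_\mu$ agree on $P(P_\kappa\theta)$, and it lets each model compute $\vartriangleleft_{\theta-sc}$ correctly among the measures it contains. But normal fine measures are elements of $P(P(P_\kappa\theta))$, and nothing forces $M$ to contain the same ones as $M_\mu$, which contains exactly the $\nu\vartriangleleft_{\theta-sc}\mu$.

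Your plan A would not help even if it succeeded. The bound $\mathrm{cp}(k)>(2^{\theta^{<\kappa}})^{M_\mu}$ only yields $k(\nu)=\nu$ for $\nu\in M_\mu$, while $o_{\theta-sc}(\kappa)^M=k(o_{\theta-sc}(\kappa)^{M_\mu})$ can still be strictly larger.

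In fact the converse is false as stated. Suppose $\kappa$ is $2^{\theta^{<\kappa}}$-supercompact and $j\colon V\to M$ is a $2^{\theta^{<\kappa}}$-supercompactness embedding. Then $M^\theta\subseteq M$, and every normal fine measure on $P_\kappa\theta$ lies in $M$. By your own Step 1 reasoning, $M$ computes all their ranks correctly, so $o_{\theta-sc}(\kappa)^M=o_{\theta-sc}(\kappa)$. Thus $\beta=o_{\theta-sc}(\kappa)$ satisfies the right-hand side but not $\beta<o_{\theta-sc}(\kappa)$.

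The paper's proof rests on the same unsupported assertion, that $M$ and $M_\mu$ ``have the same normal fine measures,'' so following it does not close the hole.

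What does hold for an arbitrary $\theta$-closed $j$ is
\[
o_{\theta-sc}(\mu)\le o_{\theta-sc}(\kappa)^M\le o_{\theta-sc}(\kappa).
\]
The first inequality holds because $k\upharpoonright(\theta+1)=\mathrm{id}$, so $k$ sends $o_{\theta-sc}(\kappa)^{M_\mu}=o_{\theta-sc}(\mu)$ to $o_{\theta-sc}(\kappa)^M$. The second holds because $M$ computes the ranks of the measures it contains correctly.

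The strict converse is true when $j$ is itself the ultrapower by a normal fine measure on $P_\kappa\theta$ (so $k=\mathrm{id}$), where it is just your Step 1. That restricted form is what the paper's later lifting arguments actually use, since a lift of such an ultrapower is again the ultrapower by its induced normal fine measure.
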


\begin{proof} Suppose $\kappa$ is $\theta$-supercompact, $\mu$ is a normal fine measure on $P_{\kappa}\theta$, and $j_{\mu}:V \to M_{\mu}$ is the ultrapower embedding by $\mu$ with critical point $\kappa$. The model $M_{\mu}$ computes $\vartriangleleft_{\theta-sc}$ correctly for $\nu \vartriangleleft_{\theta-sc} \mu$, which means $\nu \in M_{\mu}$, since $M^{\theta} \subseteq M$, and so $M_{\mu}$ has all the functions needed to check whether $\nu \vartriangleleft_{\theta-sc} \mu$. Then

\begin{equation}
\begin{split}
o_{\theta-sc}(\kappa)^{M_{\mu}} & = \sup\{o_{\theta-sc}(\nu)^{M_{\mu}} + 1 \ | \ \nu \in M_{\mu} \} \\
 & = \sup\{o_{\theta-sc}(\nu)^{M_{\mu}}+1 \ | \ \nu \vartriangleleft_{\theta-sc} \mu\}\\
 & = \sup\{o_{\theta-sc}(\nu) + 1 \ | \ \nu \vartriangleleft_{\theta-sc} \mu\} \\
 & = o_{\theta-sc}(\mu).
\end{split}
\end{equation}

Thus, the Mitchell rank for supercompactness of $\kappa$ in $M_{\mu}$ is the Mitchell rank of $\mu$. Let $\beta < o_{\theta-sc}(\kappa)$. Since $\vartriangleleft_{\theta-sc}$ is well-founded, there is $\mu$ a normal fine measure on $P_{\kappa}\theta$ such that $o_{\theta-sc}(\mu) = \beta$. Then if $j_{\mu}:V \to M_{\mu}$ is the ultrapower by $\mu$ with critical point $\kappa$, then $o_{\theta-sc}(\kappa)^{M_{\mu}} = o_{\theta-sc}(\mu) = \beta$. Conversely, suppose there is an elementary embedding $j:V \to M$ with cp$(j) = \kappa$ and $o_{\theta-sc}(\kappa)^M = \beta$. Let $\mu$ be the induced normal fine measure, using $j''\theta$ as a seed ($X \in \mu$ if and only if $j''\theta \in j(X)$) and let $M_{\mu}$ be the ultrapower by $\mu$. Then $M$ and $M_{\mu}$ have the same normal fine measures and the same Mitchell order and ranks for $\theta$-supercompactness, so $M_{\mu} \models o_{\theta-sc}(\kappa) = \beta$. Since we have established $o_{\theta-sc}(\kappa)^{M_{\mu}} = o_{\theta-sc}(\mu)$, the Mitchell rank for supercompactness of $\mu$ in $m(\kappa)$ is $\beta$, that is $o_{\theta-sc}(\mu) = \beta$. Thus since $o_{\theta-sc}(\kappa) = \sup\{o_{\theta-sc}(\mu) + 1 \ | \ \mu \in m(\kappa) \}$, it follows that $o_{\theta-sc}(\kappa)^V > \beta$. 
\end{proof}

The following lemma is an analogue of Lemma 9 for Mitchell rank for supercompactness. It is a generalization of Corollary 26 in [Hamkins 11] which states that if $V \subseteq \overline{V}$ satisfies the approximation and cover properties, then no supercompact cardinals are created from $V$ to $\overline{V}$. 

\begin{lemma} Suppose $V \subseteq \overline{V}$ satisfies the $\delta$ approximation and cover properties. If $\kappa, \theta > \delta$, then $o_{\theta-sc}(\kappa)^{\overline{V}} \le o_{\theta-sc}(\kappa)^V$.
\end{lemma}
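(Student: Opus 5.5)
We argue by induction on $o_{\theta-sc}(\kappa)^V$, mirroring the proof of Lemma 9. The induction is over the rank of normal fine measures rather than over $\kappa$, since $\kappa$ and $\theta$ stay fixed inside the target models.

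The key tool is Hamkins' theorem [11]. When $V \subseteq \overline{V}$ has the $\delta$ approximation and cover properties and $\overline{j}:\overline{V} \to \overline{M}$ is an embedding definable in $\overline{V}$ with critical point $\kappa > \delta$ and $\overline{M}^{\delta} \subseteq \overline{M}$, three things follow. First, $M = \bigcup j(V)$ is a class of $V$. Second, $j = \overline{j} \upharpoonright V : V \to M$ is definable in $V$. Third, $M \subseteq \overline{M}$ itself satisfies the $\delta$ approximation and cover properties. Moreover, $\overline{M}^{\theta} \subseteq \overline{M}$ in $\overline{V}$ gives $M^{\theta} \subseteq M$ in $V$, because a $\theta$-sequence of elements of $M$ lying in $V$ belongs to $\overline{M}$ and then, by approximation, to $M$. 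This is the content of Hamkins' Corollary 26.

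With this in hand, we prove the statement: for every $\beta$, if $o_{\theta-sc}(\kappa)^{\overline{V}} > \beta$ then $o_{\theta-sc}(\kappa)^V > \beta$. We argue by induction on $\beta$ simultaneously for all pairs of models $V \subseteq \overline{V}$ having the $\delta$ approximation and cover properties.

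Suppose $o_{\theta-sc}(\kappa)^{\overline{V}} > \beta$. By Lemma 17, applied in $\overline{V}$, there is an ultrapower $\overline{j}:\overline{V} \to \overline{M}$ by a normal fine measure on $P_\kappa\theta$ such that $\overline{M}^\theta \subseteq \overline{M}$ and $o_{\theta-sc}(\kappa)^{\overline{M}} = \beta$. Restricting as above yields $j:V \to M$ in $V$ with critical point $\kappa$ and $M^\theta \subseteq M$, and $M \subseteq \overline{M}$ has the $\delta$ approximation and cover properties. The required inequality $\theta < j(\kappa)$ holds because $\overline{j}(\kappa) = j(\kappa)$.

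Now apply the statement to the pair $M \subseteq \overline{M}$ at every $\gamma < \beta$. Since $o_{\theta-sc}(\kappa)^{\overline{M}} = \beta > \gamma$, the induction hypothesis gives $o_{\theta-sc}(\kappa)^M > \gamma$ for every $\gamma<\beta$, so $o_{\theta-sc}(\kappa)^M \ge \beta$. By Lemma 17 in $V$, we would like to conclude $o_{\theta-sc}(\kappa)^V > \beta$.

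The converse direction of Lemma 17 is stated with equality $o_{\theta-sc}(\kappa)^M = \beta$, whereas we only have $\ge \beta$. We therefore need its monotone form: if $j:V\to M$ is $\theta$-supercompact with $o_{\theta-sc}(\kappa)^M \ge \beta$, then $o_{\theta-sc}(\kappa)^V > \beta$. This follows by passing to the induced normal fine measure $\mu$ with seed $j''\theta$. The measures in $M_\mu$ are exactly those in $M$, since both models are $\theta$-closed and the measures are subsets of $P(P_\kappa\theta)$. Hence $o_{\theta-sc}(\mu) = o_{\theta-sc}(\kappa)^{M_\mu} \ge \beta$, and so $o_{\theta-sc}(\kappa)^V > \beta$.

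The main obstacle is justifying Hamkins' restriction step for $\theta$-supercompactness embeddings. One must verify the closure hypothesis Hamkins needs, namely $\overline{M}^\delta \subseteq \overline{M}$, which holds because $\theta > \delta$. One must also confirm that $\theta$-closure transfers from $\overline{M}$ down to $M$; I would cite Hamkins' Corollary 26 / Lemma on $\theta$-closure for this. A secondary point is that the induction must be run over all pairs of models with the approximation and cover properties, so that it can be applied to $M \subseteq \overline{M}$. Since the inequality for $\beta$ is first-order over the relevant $H_\lambda$, this is legitimate.
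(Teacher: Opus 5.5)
The gap is in your last step, the ``monotone form'' of the supercompactness analogue of Lemma 8 (Lemma 19 in the paper, your ``Lemma 17''). For an arbitrary $\theta$-supercompactness embedding this monotone form is false, and the reason you give for it is not valid. $\theta$-closure makes $M$ and $M_\mu$ agree on $P(P_\kappa\theta)$. But a normal fine measure is a \emph{subset} of $P(P_\kappa\theta)$, and on such objects the two models can differ. Suppose $j$ is a $2^{\theta^{<\kappa}}$-supercompactness embedding. Then $M$ contains every normal fine measure on $P_\kappa\theta$, including the induced $\mu$, which is never in $M_\mu$. It also contains every function $P_\kappa\theta\to V_\kappa$, so it computes $\vartriangleleft_{\theta-sc}$ correctly, and $o_{\theta-sc}(\kappa)^M=o_{\theta-sc}(\kappa)^V$. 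Taking $\beta=o_{\theta-sc}(\kappa)^V$ then refutes your monotone form. What holds in general goes the other way. The factor map $k:M_\mu\to M$, $k([f]_\mu)=j(f)(j''\theta)$, fixes $\kappa$ and $\theta$, so it only gives $o_{\theta-sc}(\kappa)^M=k(o_{\theta-sc}(\mu))\ge o_{\theta-sc}(\mu)$.

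So you need a fact specific to $j=\overline{j}\upharpoonright V$, which you have not established. One option is that $j$ is (isomorphic to) the ultrapower by $\mu=\overline{\mu}\cap V$. This amounts to every function $P_\kappa\theta\to V$ in $\overline V$ agreeing $\overline\mu$-almost everywhere with one in $V$. A weaker option is that $k(o_{\theta-sc}(\mu))=o_{\theta-sc}(\mu)$. The first holds for small forcing by L\'evy--Solovay, but it does not follow from the approximation and cover properties alone. I believe the following construction shows this. Take Cohen forcing at $\kappa$ over the usual Easton preparation, which has a closure point. A lift of an iterate $j_{U_1}\circ j_U$ whose generic codes $j_U(\kappa)$ can be made a normal ultrapower of $\overline V$ whose restriction to $V$ is not $j_U$.

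Your reorganization of the induction is otherwise sound: induct on $\beta$ over all pairs, and apply it directly to $M\subseteq\overline M$ with $\kappa$ and $\theta$ fixed. It is a genuinely different route from the paper's induction on $\kappa$. It also avoids a flaw there: applying $j$ to a hypothesis about $\theta$-supercompactness yields one about $j(\theta)$-supercompactness. However, the paper's proof ends with the same unjustified step (``Again by Lemma 19, $o_{\theta-sc}(\kappa)^M<o_{\theta-sc}(\kappa)^V$''). Its justification rests on the same claim that $M$ and $M_\mu$ have the same normal fine measures, so appealing to that lemma does not repair your argument.
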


\begin{proof} Suppose $V \subseteq \overline{V}$ satisfies the $\delta$ approximation and cover properties. Suppose $\kappa, \theta > \delta$. We want to show $o_{\theta-sc}(\kappa)^{\overline{V}} \le o_{\theta-sc}(\kappa)^V$ by induction on $\kappa$. Assume inductively that for all $\kappa' < \kappa$, we have $o_{\theta-sc}(\kappa')^{\overline{V}} \le o_{\theta-sc}(\kappa')^V$. If $o_{\theta-sc}(\kappa)^{\overline{V}} > o_{\theta-sc}(\kappa)^V$, then by Lemma 19 get a $\theta$-supercompactness embedding $j:\overline{V} \to \overline{M}$ with critical point $\kappa$ and $o_{\theta-sc}(\kappa)^{\overline{M}} = o_{\theta-sc}(\kappa)^V$. By [Hamkins 11], $j$ is a lift of a $\theta$-supercompactness embedding $j \upharpoonright V : V \to M$ for some $M$, and this embedding is a class in $V$. Again by Lemma 19, we have established that $o_{\theta-sc}(\kappa)^M < o_{\theta-sc}(\kappa)^V$. Thus, $o_{\theta-sc}(\kappa)^M < o_{\theta-sc}(\kappa)^{\overline{M}}$. However, this contradicts that $j$ applied to the induction hypothesis gives $o_{\theta-sc}(\kappa)^{\overline{M}} \le o_{\theta-sc}(\kappa)^M$ since $\kappa < j(\kappa)$. Therefore, $o_{\theta-sc}(\kappa)^{\overline{V}} \le o_{\theta-sc}(\kappa)^V$.
\end{proof}

\begin{lemma} If $V \subseteq V[G]$ is a forcing extension and every $\theta$-supercompactness embedding $j:V \to M$ in $V$ (with any critical point $\kappa$) liftls to an embedding $j: V[G] \to M[j(G)]$, then $o_{\theta-sc}(\kappa)^{V[G]} \ge o_{\theta-sc}(\kappa)^V$. In addition, if $V \subseteq V[G]$ also has $\delta$ approximation and $\delta$ cover properties, then $o_{\theta-sc}(\kappa)^{V[G]} = o_{\theta-sc}(\kappa)^V$.
\end{lemma}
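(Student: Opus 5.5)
We will imitate the proof of Lemma 10, replacing Lemma 8 by Lemma 19 and Lemma 9 by Lemma 20. The argument is by induction on $\kappa$ with $\theta$ fixed. The hypothesis says every $\theta$-supercompactness embedding in $V$ lifts to one in $V[G]$. So assume that for every $\kappa' < \kappa$ (with $\kappa' \le \theta$, so that the notion makes sense) we have $o_{\theta-sc}(\kappa')^{V[G]} \ge o_{\theta-sc}(\kappa')^V$. We must show the same inequality at $\kappa$; if $o_{\theta-sc}(\kappa)^V = 0$ there is nothing to prove.

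\textbf{First inequality.} Fix $\beta < o_{\theta-sc}(\kappa)^V$.
\begin{enumerate}
\item By Lemma 19, choose $j:V\to M$ with critical point $\kappa$, $M^\theta\subseteq M$ and $o_{\theta-sc}(\kappa)^M=\beta$. We may take $j$ to be the ultrapower by a normal fine measure $\mu$ on $P_\kappa\theta$ with $o_{\theta-sc}(\mu)=\beta$.
\item By hypothesis, $j$ lifts to $j:V[G]\to M[j(G)]$.
\item The induction hypothesis is a statement in $V[G]$ quantifying over $\kappa'<\kappa$ and referring to the $V$-ranks. Apply $j$ to it, reading $V$ as definable from $G$ in the ground, or using the name for $V$, which maps to $M$.
\item Since $\kappa<j(\kappa)$, this gives $o_{\theta'-sc}(\kappa)^{M[j(G)]}\ge o_{\theta'-sc}(\kappa)^M$ with $\theta' = j(\theta)$.
\end{enumerate}

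Step 4 is the subtle point. Unlike the measurable case, the parameter $\theta$ moves under $j$, and we need the statement about $\theta$ itself. Handle it by making the induction hypothesis uniform in the second parameter: prove simultaneously for all $\theta'$ and all $\kappa'<\kappa$ that rank does not decrease. Then $j$ of that hypothesis covers every $\theta''$, in particular $\theta$, at $\kappa<j(\kappa)$. This gives $o_{\theta-sc}(\kappa)^{M[j(G)]}\ge\beta$.

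\textbf{Closure of the lifted target.} Next check that $M[j(G)]^\theta\subseteq M[j(G)]$ in $V[G]$, so the lift is still a $\theta$-supercompactness embedding. Since $j''\theta\in M$ and $G$ is available, $j''G$ is handled in the usual way; alternatively, use the seed $j''\theta$ directly. The converse direction of Lemma 19 only needs the induced normal fine measure $\{X : j''\theta\in j(X)\}$ in $V[G]$. Its ultrapower $M_\nu$ factors into $M[j(G)]$, and $M_\nu$ and $M[j(G)]$ agree on normal fine measures on $P_\kappa\theta$, hence on $o_{\theta-sc}(\kappa)$. So $o_{\theta-sc}(\nu)\ge\beta$ and $o_{\theta-sc}(\kappa)^{V[G]}>\beta$. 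Since $\beta$ was arbitrary, $o_{\theta-sc}(\kappa)^{V[G]}\ge o_{\theta-sc}(\kappa)^V$.

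\textbf{Equality.} If $V\subseteq V[G]$ also has the $\delta$ approximation and cover properties (with $\kappa,\theta>\delta$), Lemma 20 gives $o_{\theta-sc}(\kappa)^{V[G]}\le o_{\theta-sc}(\kappa)^V$, and combining the two inequalities yields equality.

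The main obstacle is the agreement claim: $M_\nu$ and $M[j(G)]$ must compute the same $o_{\theta-sc}(\kappa)$. This needs measures on $P_\kappa\theta$ and the relevant functions on $P_\kappa\theta$ to be shared, which uses $\theta$-closure, i.e. $2^{\theta^{<\kappa}}$-sized objects. If the closure is insufficient, I would instead argue via the factor map $k:M_\nu\to M[j(G)]$, which has critical point above $\theta$. Elementarity of $k$ then transfers the rank statement, since $k(\kappa)=\kappa$ and $k(\theta)=\theta$.
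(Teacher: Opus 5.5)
Your outline is the same as the paper's: induction on $\kappa$, Lemma 19 to pick $j$ with $o_{\theta-sc}(\kappa)^M=\beta$, lift, apply $j$ to the induction hypothesis, and Lemma 20 for equality. You also correctly notice something the paper's proof passes over. Applying $j$ to an induction hypothesis about the single level $\theta$ only yields $o_{j(\theta)-sc}(\kappa)^{M[j(G)]}\ge o_{j(\theta)-sc}(\kappa)^M$, and $j(\theta)\ge j(\kappa)>\theta$.

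The gap is in your repair. You read the hypothesis as lifting $\theta$-supercompactness embeddings for one fixed $\theta$. But the strengthened induction hypothesis ``for all $\kappa'<\kappa$ and all $\theta'$'' can only be maintained if, at each pair $(\kappa',\theta')$ with $\theta'\neq\theta$, you can lift $\theta'$-supercompactness embeddings with critical point $\kappa'$. Nothing in the hypothesis gives you that. Nor can the extra levels be avoided. $j$ sends any statement about level $\theta$ alone to a statement about level $j(\theta)$. So the instance you need at $(\kappa,\theta)$ in $M[j(G)]$ must come from induction-hypothesis instances at other levels, e.g.\ at $(\kappa',\kappa'^+)$ for $\kappa'<\kappa$ when $\theta=\kappa^+$.

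So your argument needs an explicitly stronger hypothesis. One option is lifting of $\theta'$-supercompactness embeddings for every $\theta'$. The other is lifting at least for the levels $\Theta(\kappa')$ of a function $\Theta$ with $j(\Theta\upharpoonright\kappa')(\kappa')=\Theta(\kappa')$ for the relevant embeddings $j$ with critical point $\kappa'$, which is the setup of the paper's final theorem. Under either stronger hypothesis, stated explicitly, your proof is correct and more careful than the paper's. For a single fixed $\theta$, neither your argument nor the paper's establishes the statement.

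A smaller point concerns your fallback through the factor map $k$ with $\mathrm{crit}(k)>\theta$. On its own it would not suffice, since $k$ need not fix the rank ordinal, which can exceed $\theta$. It is also unnecessary. Since $j$ is the ultrapower by $\mu$, every element of $M[j(G)]$ is $\tau_{j(G)}$ with $\tau=j(h)(j''\theta)$ for some $h\in V$. Moreover $\tau_{j(G)}=j(h')(j''\theta)$, where $h'(\sigma)=h(\sigma)_G$ is in $V[G]$. So $k$ is onto, and $M[j(G)]\cong\mathrm{Ult}(V[G],\nu)$ is $\theta$-closed in $V[G]$.
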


\begin{proof} Suppose $V \subseteq V[G]$ is a forcing extension where every normal ultrapower embedding in $V$ lifts to a normal ultrapower embedding in $V[G]$. Suppose $\kappa$ is a $\theta$-supercompact cardinal, and for every $\theta$-supercompact cardinal $\kappa' < \kappa$, assume $o_{\theta-sc}(\kappa')^{V[G]} \ge o_{\theta-sc}(\kappa')^V$. Fix any $\beta < o_{\theta-sc}(\kappa)$. By Lemma 19, there exists an elementary embedding $j:V \to M$ with cp$(j) = \kappa$ and $M \models o_{\theta-sc}(\kappa) = \beta$. Then, by the assumption on $V \subseteq V[G]$, this $j$ lifts to $j:V[G] \to M[j(G)]$. Then, applying $j$ to the induction hypothesis gives $o_{\theta-sc}(\kappa)^{M[j(G)]} \ge o_{\theta-sc}(\kappa)^M$ since $\kappa < j(\kappa)$. Then, since $M \models o_{\theta-sc}(\kappa) = \beta$ it follows that $o_{\theta-sc}(\kappa)^{M[j(G)]} \ge \beta$. Thus $o_{\theta-sc}(\kappa)^{V[G]} \ge o_{\theta-sc}(\kappa)^V$.

If $V \subseteq V[G]$ also satisfies the $\delta$ approximation and $\delta$ cover properties, then by Lemma 20 the Mitchell rank does not go up between these models. Thus $o_{\theta-sc}(\kappa)^{V[G]} = o_{\theta-sc}(\kappa)^V$.
\end{proof}

The following theorem shows how to force a $\kappa^+$-supercompact cardinal $\kappa$ to have Mitchell rank for supercompactness at most 1.

\begin{theorem} If $V \models$ ZFC + GCH then there is a forcing extension where every cardinal $\kappa$ which is $\kappa^+$-supercompact has $o_{\kappa^+-sc}(\kappa)^{V[G]} = \min\{o_{\kappa^+-sc}(\kappa)^V, 1\}$.
\end{theorem}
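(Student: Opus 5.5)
I would follow the template of Theorem 11, with $\alpha = 1$, replacing measurability by $\kappa^+$-supercompactness and Lemmas 8, 9 by Lemmas 19, 20.

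\textbf{The forcing.} Let $\mathbb{P}$ be the Easton support ORD-length iteration which, at each inaccessible stage $\gamma$, adds a club $c_\gamma \subseteq \gamma$ such that $\delta \in c_\gamma$ implies $\delta$ is not $\delta^+$-supercompact in $V$, that is $o_{\delta^+-sc}(\delta)^V < 1$; all other stages are trivial. As in Theorem 11, $\mathbb{P}$ has a closure point at the first inaccessible $\delta_0$. Hence $V \subseteq V[G]$ has the $\delta_0^+$ approximation and cover properties, and Lemma 20 gives $o_{\kappa^+-sc}(\kappa)^{V[G]} \le o_{\kappa^+-sc}(\kappa)^V$ for every relevant $\kappa$.

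\textbf{Lower bound.} Fix $\kappa$ that is $\kappa^+$-supercompact in $V$. By Lemma 19 there is a $\kappa^+$-supercompactness embedding $j:V\to M$ with $o_{\kappa^+-sc}(\kappa)^M = 0$, so $\kappa$ is not $\kappa^+$-supercompact in $M$. Such a $j$ is an ultrapower by a normal fine measure with minimal rank, and it satisfies $M^{\kappa^+}\subseteq M$. Since $\kappa$ is not $\kappa^+$-supercompact in $M$, stage $\kappa$ of $j(\mathbb{P})$ is $\mathbb{Q}$, and $j(\mathbb{P}_\kappa) = \mathbb{P}_\kappa * \dot{\mathbb{Q}} * \mathbb{P}_{\rm tail}$.

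The only change from Theorem 11 is the level of closure and counting. $M[G_\kappa][g]$ is closed under $\kappa^+$-sequences, using the $\kappa^+$-c.c.\ and Hamkins [8]. Under GCH, $\mathbb{P}_{\rm tail}$ has at most $|j(\kappa^{++})|^V \le (\kappa^{++})^{\kappa^+} = \kappa^{++}$ dense sets in $M[G_\kappa][g]$. Its first nontrivial stage is the next inaccessible above $\kappa$ in $M$, which lies beyond $\kappa^+$, so it has a $\le\kappa^+$-closed dense subset. Diagonalizing therefore produces $G^*$, and $j$ lifts to $V[G_\kappa]$.

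For $\mathbb{Q}$, use the master condition $\bar c = c\cup\{\kappa\}$. Its legitimacy needs two facts.
\begin{itemize}
\item Points $\delta \in c$ are not $\delta^+$-supercompact in $M[j(G_\kappa)]$. This holds because $V$ and $M$ agree on $P(P_\delta\delta^+)$, and by Lemma 20 applied to $M\subseteq M[j(G_\kappa)]$.
\item $\kappa$ is not $\kappa^+$-supercompact in $M[j(G_\kappa)]$, again by Lemma 20.
\end{itemize}
The diagonalization for $j(\mathbb{Q})$ then goes through with $\kappa^+$-closure and $\kappa^{++}$ dense sets. The lifted embedding is still a $\kappa^+$-supercompactness embedding, since $j''\kappa^+ \in M[j(G)]$ by closure. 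The forcing beyond stage $\kappa$ is $\le\kappa^+$-distributive, since its next nontrivial stage is inaccessible, and so it adds no subsets of $P_\kappa\kappa^+$. Hence $\kappa$ remains $\kappa^+$-supercompact in $V[G]$.

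\textbf{Upper bound.} Suppose $\kappa$ is $\kappa^+$-supercompact in $V[G]$ via $j:V[G]\to M[j(G)]$ with $M[j(G)]^{\kappa^+}\subseteq M[j(G)]$. The club $c_\kappa$ contains only $\delta$ that are not $\delta^+$-supercompact in $V$, and by the no-creation result (Lemma 20) also not in $V[G]$. By elementarity, $\kappa\in j(c_\kappa)$ is not $\kappa^+$-supercompact in $M[j(G)]$. By Lemma 19 this gives $o_{\kappa^+-sc}(\kappa)^{V[G]}\le 1$.

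\textbf{Main obstacle.} The hardest step is applying Lemma 20 to $M\subseteq M[j(G_\kappa)]$ and to $\delta<\kappa$. This requires $\delta$ above the closure point $\delta_0$. For small $\delta \le \delta_0$, forcing of size $\le\delta_0$ destroys any $\delta^+$-supercompactness anyway, or I would start the iteration above $\delta_0$. I would also double-check the dense-set count at the $\kappa^{++}$ level.
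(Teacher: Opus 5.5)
Your proposal is correct and follows essentially the paper's proof. It uses the same Easton-support club-shooting iteration and lifts a rank-$0$ $\kappa^+$-supercompactness embedding in the same way, diagonalizing for $\mathbb{P}_{\mbox{tail}}$ and then for $j(\mathbb{Q})$ below the master condition $c\cup\{\kappa\}$. The upper bound is also the same, via $\kappa\in j(c_\kappa)$ and Lemma 19; you cite the no-creation lemma where the paper cites its induction hypothesis, and these amount to the same thing when the target rank is $1$.

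Your ``main obstacle'' about small $\delta$ disappears for a simpler reason than the one you give. No $\delta\le\delta_0^+$ can be measurable in any outer model of $V$ or of $M$. A measurable cardinal is an inaccessible limit of inaccessibles, inaccessibility is downward absolute, and $V$ and $M$ have no inaccessible below $\delta_0$.
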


\begin{proof} Suppose $V \models $ ZFC + GCH. Let $\mathbb{P}$ be an Easton support ORD-length iteration which forces at inaccessible stages $\gamma$ to add a club $c_{\gamma} \subseteq \gamma$ such that $\delta \in c_{\gamma}$ implies $o_{\delta^+-sc}(\delta)^V = 0$. Let $G \subseteq \mathbb{P}$ be $V$-generic. Let $\delta_0$ be the first inaccessible cardinal. The forcing before stage $\delta_0$ is trivial, and the forcing at stage $\delta_0$ adds a club to $\delta_0$. Thus the forcing $\mathbb{P}$ up to and including stage $\delta_0$ has cardinality $\delta_0$, and the forcing past stage $\delta_0$ has a dense set which is closed up to the next inaccessible cardinal. Thus $\mathbb{P}$ has a closure point at $\delta_0$ which implies $V \subseteq V[G]$ satisfies the $\delta_0$ approximation and cover properties by [Hamkins 11 (Lemma 13)]. Thus by [Hamkins 11 (Corollary 26)], the forcing $\mathbb{P}$ does not create $\theta$-supercompact cardinals for any $\theta$. Note that for any $\theta$-supercompact cardinal $\kappa$, both $\theta$ and $\kappa$ are above the closure point.

Suppose $\kappa$ is $\kappa^+$-supercompact. The induction hypothesis is that any $\kappa' < \kappa$ has $o_{\kappa'^+-sc}(\kappa')^{V[G]} = \min\{o_{\kappa'^+-sc}(\kappa')^V, 1\}$. Pick $j:V \to M$ a $\kappa^+$-supercompactness embedding with critical point $\kappa$ such that $o_{\kappa^+-sc}(\kappa)^M = 0 < \min\{o_{\kappa^+-sc}(\kappa)^V,1\}$. We shall lift $j$ through $\mathbb{P}$. The forcing above $\kappa$ does not affect the Mitchell rank for $\kappa^+$-supercompactness of $\kappa$ since the forcing past stage $\kappa$ has a dense subet which is closed up to the next inaccessible cardinal past $\kappa$. Call $\mathbb{P}_{\kappa}*\dot{\mathbb{Q}}$ the forcing up to and including stage $\kappa$. First, we lift $j$ through $\mathbb{P}_{\kappa}$. Let $G_{\kappa} \subseteq \mathbb{P}_{\kappa}$ be $V$-generic and $g \subseteq \mathbb{Q}$ be $V[G_{\kappa}]$-generic. First, we need an $M$-generic filter for $j(\mathbb{P}_{\kappa})$. Since cp$(j) = \kappa$, the forcings $\mathbb{P}_{\kappa}$ and $j(\mathbb{P}_{\kappa})$ agree up to stage $\kappa$. The forcing $\mathbb{Q}$ adds a club $c \subseteq \kappa$ such that $\forall \delta < \kappa, \delta \in c$ implies $o_{\delta^+-sc}(\delta)^V = 0$. Since $o_{\delta^+-sc}(\delta)^V = 0$ implies $o_{\delta^+-sc}(\delta)^M = 0$, it follows that the $\kappa$th stage of $j(\mathbb{P}_{\kappa})$ is $\mathbb{Q}$. Thus, $j(\mathbb{P}_{\kappa})$ factors as $\mathbb{P}_{\kappa}*\dot{\mathbb{Q}}*\mathbb{P}_{\mbox{tail}}$ where $\mathbb{P}_{\mbox{tail}}$ is the forcing past stage $\kappa$. Since $G_{\kappa}*g$ is $M$-generic for $\mathbb{P}_{\kappa}*\dot{\mathbb{Q}}$, we only need an $M[G_{\kappa}][g]$-generic filter for $\mathbb{P}_{\mbox{tail}}$ which is in $V[G_{\kappa}][g]$. We will diagonalize to obtain such a filter, after checking that we have met the criteria. Since $|\mathbb{P}_{\kappa}| = |\mathbb{Q}| = \kappa$, and $V \models $ GCH, it follows that the forcing $\mathbb{P}_{\kappa}*\dot{\mathbb{Q}}$ has the $\kappa^+$-chain condition. Since $M^{\kappa^+} \subseteq M$, it follows [Hamkins 8 (Theorem 54)] that $M[G_{\kappa}][g]^{\kappa^+} \subseteq M[G_{\kappa}][g]$. Since $|\mathbb{P}_{\kappa}| = \kappa$, and has the $\kappa^+$-chain condition, it has at most $\kappa^{<\kappa^+} = \kappa^+$ many maximal antichains. Since $|j(\kappa^+)| \le \kappa^{+^{{\kappa^+}^{<\kappa}}} = \kappa^{++}$, it follows that $\mathbb{P}_{\mbox{tail}}$ has at most $\kappa^{++}$ many maximal antichains in $M[G_{\kappa}][g]$. Since $\mathbb{P}_{\mbox{tail}}$ has a dense subset which is closed up to the next inaccessible cardinal past $\kappa$, it has a dense subset which is $\le \kappa^+$-closed. Thus, we can enumerate the maximal antichains of $\mathbb{P}_{\mbox{tail}}$, and build a descending sequence of conditions meeting them all through the dense set which has $\le \kappa^+$-closure, using this closure and the fact that $M[G_{\kappa}][g]$ is closed under $\kappa^+$-sequences to build through the limit stages. Then the upward closure of this sequence $G_{\mbox{tail}} \subseteq \mathbb{P}_{\mbox{tail}}$ is $M[G_{\kappa}][g]$-generic and is in $V[G_{\kappa}][g]$. Thus $j(G_{\kappa}) = G_{\kappa}*g*G_{\mbox{tail}}$ is $M$-generic, it is in $V[G_{\kappa}][g]$ and $j''G_{\kappa} \subseteq j(G_{\kappa})$. Thus, we have the partial lift $j:V[G_{\kappa}] \to M[j(G_{\kappa})]$.

Next we lift through $\mathbb{Q}$ by finding an $M[j(G_{\kappa})]$-generic filter for $j(\mathbb{Q})$. The forcing $j(\mathbb{Q})$ adds a club $C \subseteq j(\kappa)$ such that $\forall \delta < j(\kappa), \delta \in C$ implies $o_{\delta^+-sc}(\delta)^{M[j(G_{\kappa})]} = 0$. Since $j(G_{\kappa}) \in V[G_{\kappa}][g]$ and $M^{\kappa^+} \subseteq M$, it follows that $M[j(G_{\kappa})]^{\kappa^+} \subseteq M[j(G_{\kappa})]$ by [Hamkins 8 (Theorem 53)]. Since for every $\beta < \kappa$, there is a dense subset of $\mathbb{Q}$ which is $\le \beta$-closed, it follows that there is a dense subset of $j(\mathbb{Q})$ which is $\le \kappa^+$-closed. Also since $|\mathbb{Q}| = \kappa$ and $\mathbb{Q}$ has the $\kappa^+$-chain condition, the forcing $\mathbb{Q}$ has at most $\kappa^+$ many maximal antichains. Therefore $j(\mathbb{Q})$ has at most $|j(\kappa^+)| \le \kappa^{+^{\kappa^{+^{<\kappa}}}} = \kappa^{++}$ many maximal antichains in $M[j(G_{\kappa})]$. Thus we can diagonalize to get a generic filter, but we will do so with a master condition as follows. Let $c = \cup g$ be the new club of $\kappa$ and consider $\overline{c} = c \cup \kappa$ which is in $M[j(G_{\kappa})]$. We have $\delta \in c$ implies $o_{\delta^+-sc}(\delta)^V = 0$. Thus, $\delta \in c$ implies $o_{\delta^+-sc}(\delta)^M = 0$. By Corollary 26 [Hamkins 11], no new $\delta^+$-supercompact cardinals are created between $M$ and $M[j(G_{\kappa})]$, so that $\delta \in c$ implies $o_{\delta^+-sc}(\delta)^{M[j(G_{\kappa})]} = 0$. Since $o_{\kappa^+-sc}(\kappa)^M = 0$, it follows that $o_{\kappa^+-sc}(\kappa)^{M[j(G_{\kappa})]} = 0$. Thus $\overline{c}$ is a closed, bounded subset of $j(\kappa)$ such that every $\delta$ in $\overline{c}$ has $o_{\delta^+-sc}(\delta)^{M[j(G_{\kappa})]} = 0$. Thus $\overline{c} \in j(\mathbb{Q})$. Thus diagonalize to get an $M[j(G_{\kappa})]$-generic filter $g^* \subseteq j(\mathbb{Q})$ which contains the master condition $\overline{c}$. Then $j''g \subseteq j(g) = g^*$. Therefore we may lift embedding to $j:V[G_{\kappa}][g] \to M[j(G_{\kappa})][j(g)]$.

The success of the lifting arguments show that $o_{\kappa^+-sc}(\kappa)^{V[G]} \ge \min\{o_{\kappa^+-sc}(\kappa)^V, 1\}$. By Lemma 20, we have $o_{\kappa^+-sc}(\kappa)^{V[G]} \le o_{\kappa^+-sc}(\kappa)^V$. All that remains is to see that $o_{\kappa^+-sc}(\kappa)^{V[G]} \le 1$. Consider any $\kappa^+$-supercompactness embedding $j:V[G] \to M[j(G)]$ with critical point $\kappa$ in $V[G]$, and the new club $c \subseteq \kappa$, where $\forall \delta < \kappa, \delta \in c$ implies $o_{\kappa^+-sc}(\delta)^V = 0$. This club is in any normal fine measure on $P_{\kappa}\kappa^+$. By the induction hypothesis, $\forall \delta < \kappa, \delta \in c$ implies $o_{\delta^+-sc}(\delta)^{V[G]} = 0$. Applying $j$ to this statement gives $\forall \delta < j(\kappa), \delta \in j(c)$ implies $o_{\delta^+-sc}(\delta)^{M[j(G)]} = 0$. Thus by Lemma 19, since $j$ was arbitrary, $\kappa < j(\kappa)$, and $\kappa \in j(c)$, it follows that $o_{\kappa^+-sc}(\kappa)^{M[j(G)]} = 0$, and hence $o_{\kappa^+-sc}(\kappa)^{V[G]} \le 1$. Thus $o_{\kappa^+-sc}(\kappa)^{V[G]} \le \min\{o_{\kappa^+-sc}(\kappa)^V, 1\}$. Thus, for any $\kappa$ which is $\kappa^+$-supercompact, we have $o_{\kappa^+-sc}(\kappa)^{V[G]} = \min\{o_{\kappa^+-sc}(\kappa)^V,1\}$.
\end{proof}

For the following theorem for softly killing Mitchell rank for supercompactness, we will again need representing functions because we would like to now consider cases where $o_{\kappa^+-sc}(\kappa) \ge \kappa$. Then for the most general theorem, we would like to consider the cases where $\kappa$ is $\theta$-supercompact for $\theta > \kappa^+$. In [Hamkins 9] representing functions are considered for supercompactness embeddings as well. Let $\alpha \in H_{\theta^+}$ and $\kappa$ be a $\theta$-supercompact cardinal. The cardinal $\alpha$ is represented by $f: \kappa \to V_{\kappa}$ if whenever $j: V \to M$ an elementary embedding with cp$(j) = \kappa$ and $M^{\theta} \subseteq M$, then $j(f)(\kappa) = \alpha$ (this can be formalized as a first-order statement using extenders) [Hamkins 9]. Let $F:$ORD $\to$ ORD. The next theorem shows how to softly kill the Mitchell rank for supercompactness for all $\gamma^+$-supercompact cardinals $\gamma$ for which $F \upharpoonright \gamma$ represents $F(\gamma)$.

\begin{theorem} For any $V \models$ ZFC + GCH and any $F: ORD \to ORD$, there is a forcing extension $V[G]$ where, if $\kappa$ is $\kappa^+$-supercompact and $F \upharpoonright \kappa$ represents $F(\kappa)$ in $V$, then $o_{\kappa^+-sc}(\kappa)^{V[G]} = \min\{o_{\kappa^+-sc}(\kappa)^V, F(\kappa)\}$.
\end{theorem}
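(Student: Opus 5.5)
We follow the proof of Theorem 14 (the representing-function version for measurables), with the closure and counting arguments replaced by those of Theorem 22. Let $\mathbb{P}$ be an Easton support ORD-length iteration which, at each inaccessible stage $\gamma$, adds a club $c_\gamma\subseteq\gamma$ such that every $\delta\in c_\gamma$ satisfies $o_{\delta^+-sc}(\delta)^V<F(\delta)$. We force trivially at other stages.

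\textbf{Ground facts.} As before, $\mathbb{P}$ has a closure point at the first inaccessible $\delta_0$. By [Hamkins 11 (Lemma 13)], $V\subseteq V[G]$ therefore has the $\delta_0^+$ approximation and cover properties. Lemma 20 then gives $o_{\kappa^+-sc}(\kappa)^{V[G]}\le o_{\kappa^+-sc}(\kappa)^V$ for every relevant $\kappa$.

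We also check that $F\upharpoonright\kappa$ still represents $F(\kappa)$ in $V[G]$. By [Hamkins 11], every $\kappa^+$-closed embedding $j:V[G]\to M[j(G)]$ is a lift of a $\kappa^+$-closed embedding $j\upharpoonright V$ lying in $V$. Hence $j(F\upharpoonright\kappa)(\kappa)=(j\upharpoonright V)(F\upharpoonright\kappa)(\kappa)=F(\kappa)$.

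We then argue by induction on $\kappa$. The hypothesis is that every $\kappa'<\kappa$ which is $\kappa'^+$-supercompact and has $F\upharpoonright\kappa'$ representing $F(\kappa')$ satisfies the conclusion.

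\textbf{Lower bound.} Fix $\beta_0<\min\{o_{\kappa^+-sc}(\kappa)^V,F(\kappa)\}$. By Lemma 19, choose a $\kappa^+$-supercompactness embedding $j:V\to M$ with $o_{\kappa^+-sc}(\kappa)^M=\beta_0$. Since $F\upharpoonright\kappa$ is representing and $M^{\kappa^+}\subseteq M$, we get $j(F)(\kappa)=F(\kappa)>\beta_0$. Also $o_{\delta^+-sc}(\delta)^V=o_{\delta^+-sc}(\delta)^M$ for $\delta<\kappa$, because $M$ agrees with $V$ far enough up. Therefore stage $\kappa$ of $j(\mathbb{P}_\kappa)$ is exactly $\mathbb{Q}$, and $j(\mathbb{P}_\kappa)=\mathbb{P}_\kappa*\dot{\mathbb{Q}}*\mathbb{P}_{\rm tail}$.

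We construct $G_{\rm tail}$ by diagonalization in $V[G_\kappa][g]$, exactly as in Theorem 22:
\begin{itemize}
\item $M[G_\kappa][g]$ is closed under $\kappa^+$-sequences, by the $\kappa^+$-c.c.\ and [Hamkins 8];
\item $\mathbb{P}_{\rm tail}$ has at most $\kappa^{++}$ maximal antichains, by GCH;
\item $\mathbb{P}_{\rm tail}$ has a $\le\kappa^+$-closed dense subset, by Lemma 7.
\end{itemize}
This lifts $j$ to $V[G_\kappa]$.

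For $j(\mathbb{Q})$ we use the master condition $\overline c=c\cup\{\kappa\}$. Each $\delta\in c$ has $o_{\delta^+-sc}(\delta)^M<F(\delta)=j(F)(\delta)$. By Lemma 20 applied to $M\subseteq M[j(G_\kappa)]$, the rank does not go up in $M[j(G_\kappa)]$. Likewise $o_{\kappa^+-sc}(\kappa)^{M[j(G_\kappa)]}\le\beta_0<j(F)(\kappa)$. So $\overline c\in j(\mathbb{Q})$, and diagonalizing below it gives $j(g)$.

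The stages above $\kappa$ are closed well beyond $\kappa^+$, so they add no subsets of $P_\kappa\kappa^+$ and preserve the lifted measure. The lifted embedding therefore witnesses, via Lemma 19, that $o_{\kappa^+-sc}(\kappa)^{V[G]}>\beta_0$.

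\textbf{Upper bound.} Take any $\kappa^+$-supercompactness embedding $j:V[G]\to M[j(G)]$. The club $c$ lies in the induced normal measure, since $\kappa\in j(c)$. By the induction hypothesis, each $\delta\in c$ has $o_{\delta^+-sc}(\delta)^{V[G]}<F(\delta)$. Applying $j$, every $\delta\in j(c)$ satisfies $o_{\delta^+-sc}(\delta)^{M[j(G)]}<j(F)(\delta)$. At $\delta=\kappa$ this reads $o_{\kappa^+-sc}(\kappa)^{M[j(G)]}<j(F)(\kappa)=F(\kappa)$, using that $F\upharpoonright\kappa$ is still representing in $V[G]$. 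By Lemma 19, $o_{\kappa^+-sc}(\kappa)^{V[G]}\le F(\kappa)$. Combined with Lemma 20, this gives the stated equality.

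\textbf{Main obstacle.} The delicate step is applying the induction hypothesis to the $\delta\in c$. The club only guarantees $o_{\delta^+-sc}(\delta)^V<F(\delta)$. The hypothesis yields $o^{V[G]}=\min\{o^V,F\}$ only for those $\delta$ at which $F\upharpoonright\delta$ represents $F(\delta)$. For the remaining $\delta$ we instead use Lemma 20, which already gives $o^{V[G]}(\delta)\le o^V(\delta)<F(\delta)$. That suffices for the upper bound, so the inequality needed holds uniformly for all $\delta\in c$.
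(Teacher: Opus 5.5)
Your proposal is correct and follows the paper's proof essentially step for step. It uses the same Easton-support iteration shooting clubs through $\{\delta : o_{\delta^+-sc}(\delta)^V<F(\delta)\}$, the same closure-point and Lemma 20 argument, the same lift through $\mathbb{P}_\kappa$ by diagonalization and through $\mathbb{Q}$ with master condition $c\cup\{\kappa\}$, and the same club argument ($\kappa\in j(c)$, $j(F)(\kappa)=F(\kappa)$) for the upper bound. Your one deviation is to invoke Lemma 20 instead of the induction hypothesis to get $o_{\delta^+-sc}(\delta)^{V[G]}<F(\delta)$ for all $\delta\in c$. This is a small genuine repair, since the paper's hypothesis formally covers only those $\delta$ at which $F\upharpoonright\delta$ represents $F(\delta)$. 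Note, however, that the induction is then used only in the lower bound. There, in your write-up as in the paper's, it must be pushed through $j$ (as in Lemma 21) to see that the target of the lifted embedding still satisfies $o_{\kappa^+-sc}(\kappa)\ge\beta_0$; neither version spells this step out.
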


\begin{proof} Suppose $V \models$ ZFC + GCH. Let $F:$ ORD $\to$ ORD. Let $\mathbb{P}$ be an Easton support ORD-length iteration which forces at inaccessible stages $\gamma$ to add a club $c_{\gamma} \subseteq \gamma$ such that $\forall \delta < \gamma, \delta \in c_{\gamma}$ implies $o_{\delta^+-sc}(\delta)^V < F(\delta)$. Let $G \subseteq \mathbb{P}$ be $V$-generic. Let $\delta_0$ be the first inaccessible cardinal. The forcing before stage $\delta_0$ is trivial, and the forcing at stage $\delta_0$ adds a club to $\delta_0$. Thus the forcing $\mathbb{P}$ up to and including stage $\delta_0$ has cardinality $\delta_0$ and the forcing past stage $\delta_0$ has a dense set which is closed up to the next inaccessible cardinal. Thus $\mathbb{P}$ has a closure point at $\delta_0$, thus $V \subseteq V[G]$ satisfies the $\delta_0^+$ approximation and cover properties by Hamkins [11 (Lemma 13)]. Thus by Hamkins [11 (Corollary 26)], the forcing $\mathbb{P}$ does not create $\theta$-supercompact cardinals for any $\theta$. Note that for any $\theta$-supercompact cardinal $\kappa$, both $\kappa$ and $\theta$ are above this closure point.

Let $\kappa$ be a $\kappa^+$-supercompact cardinal such that $F \upharpoonright \kappa$ represents $F(\kappa) = \alpha$ in $V$. Here we will need that representing functions are still representing funtions in the extension. The fact we will use is from [11] which says that if $V \subseteq V[G]$ satisfies the hypothesis of approximation and cover theorem, then every $\theta$-closed embedding $j:V[G] \to M[j(G)]$ is a lift of an embedding $j\upharpoonright V: V \to M$ that is in $V$ (and $j \upharpoonright V$ is also a $\theta$-closed embedding). Thus, if $F \upharpoonright \kappa$ is a representing function for $F(\kappa)$ in the ground model, $(j \upharpoonright V)(F \upharpoonright \kappa)(\kappa) = F(\kappa)$, and thus $F \upharpoonright \kappa$ is a representing function for $F(\kappa)$ with respect to such embeddings $j$ in $V[G]$.

The induction hypothesis is that any $\kappa' < \kappa$, where $F \upharpoonright \kappa'$ represents $F(\kappa')$ in $V$, has $o_{\kappa'^+-sc}(\kappa')^{V[G]} = \min\{o_{\kappa'^+-sc}(\kappa')^V, F(\kappa')\}$. Pick $j:V \to M$ a $\kappa^+$-supercompactness embedding with critical point $\kappa$ such that $o_{\kappa^+-sc}(\kappa)^M = \beta_0 < \min\{o_{\kappa^+-sc}(\kappa)^V, F(\kappa)\}$. We shall lift $j$ through $\mathbb{P}$. The forcing above $\kappa$ does not affect the Mitchell rank for $\kappa^+$-supercompactness of $\kappa$ since the forcing past stage $\kappa$ has a dense subset which is closed up to the next inaccessible cardinal past $\kappa$. Call $\mathbb{P}_{\kappa}*\dot{\mathbb{Q}}$ the forcing up to and including stage $\kappa$.

First we shall lift $j$ through $\mathbb{P}_{\kappa}$. Let $G_{\kappa} \subseteq \mathbb{P}_{\kappa}$ be $V$-generic and $g \subseteq \mathbb{Q}$ be $V[G_{\kappa}]$-generic. First, we an $M$-generic filter for $j(\mathbb{P}_{\kappa})$. Since cp$(j) = \kappa$, the forcings $\mathbb{P}_{\kappa}$ and $j(\mathbb{P}_{\kappa})$ agree up to stage $\kappa$. The forcing $\mathbb{Q}$ adds a club $c \subseteq \kappa$ such that $\forall \delta < \kappa, \delta \in c$ implies $o_{\delta^+-sc}(\delta)^V < F(\delta)$. Since $o_{\delta^+-sc}(\delta)^V < F(\delta)$ implies $o_{\delta^+-sc}(\delta)^M < F(\delta) = j(F)(\delta)$, it follows that the $\kappa$th stage of $j(\mathbb{P}_{\kappa})$ is $\mathbb{Q}$. Thus, $j(\mathbb{P}_{\kappa})$ factors as $\mathbb{P}_{\kappa}*\dot{\mathbb{Q}}*\mathbb{P}_{\mbox{tail}}$ where $\mathbb{P}_{\mbox{tail}}$ is the forcing past stage $\kappa$. Since $G_{\kappa}*g$ is $M$-generic for $\mathbb{P}_{\kappa}*\dot{\mathbb{Q}}$, we only need an $M[G_{\kappa}][g]$-generic filter for $\mathbb{P}_{\mbox{tail}}$ which is in $V[G_{\kappa}][g]$. We will diagonalize to obtain such a filter, after checkiing that we have met the criteria. Since $|\mathbb{P}_{\kappa}| = |\mathbb{Q}| = \kappa$, and $V \models$ GCH, it follows that the forcing $\mathbb{P}_{\kappa}*\dot{\mathbb{Q}}$ has the $\kappa^+$-chain condition. Since $M^{\kappa^+} \subseteq M$, it follows [Hamkins 8 (Theorem 54)] that $M[G_{\kappa}][g]^{\kappa^+} \subseteq M[G_{\kappa}][g]$. Since $|\mathbb{P}_{\kappa}| = \kappa$, and has the $\kappa^+$-chain condition, it has at most $\kappa^{<\kappa^+} = \kappa^+$ many maximal antichains. Since $|j(\kappa^+)| \le \kappa^{+^{\kappa^{+^{<\kappa}}}} = \kappa^{++}$it follows that $\mathbb{P}_{\mbox{tail}}$ has at most $\kappa^{++}$ many maximal antichains in $M[G_{\kappa}][g]$. Since $\mathbb{P}_{\mbox{tail}}$ has a dense subset which is closed up to the next inaccessible past $\kappa$, it has a dense subset which is $\le \kappa^+$-closed. Thus we can enumerate the maximal antichains of $\mathbb{P}_{\mbox{tail}}$, and build a descending sequence of conditions meeting them all through the dense set which has $\le \kappa^+$-closure, using this closure and the fact that $M[G_{\kappa}][g]$ is closed under $\kappa^+$-sequences to build through the limit stages. Then the upward closure of this sequence $G_{\mbox{tail}} \subseteq \mathbb{P}_{\mbox{tail}}$ is $M[G_{\kappa}][g]$-generic and is in $V[G_{\kappa}][g]$. Thus $j(G_{\kappa}) = G_{\kappa}*g*G_{\mbox{tail}}$ is $M$-generic, it is in $V[G_{\kappa}][g]$ and $j''G_{\kappa} \subseteq j(G_{\kappa})$. Thus we have the partial lift $j:V[G_{\kappa}] \to M[j(G_{\kappa})]$.

Next we lift through $\mathbb{Q}$ by finding an $M[j(G_{\kappa})]$-generic filter for $j(\mathbb{Q})$. The forcing $j(\mathbb{Q})$ adds a club $C \subseteq j(\kappa)$ such that $\forall \delta < j(\kappa), \delta \in C$ implies $o_{\delta^+-sc}(\delta)^{M[j(G_{\kappa})]} < j(F)(\delta)$. Since $j(G_{\kappa}) \in V[G_{\kappa}][g]$ and $M^{\kappa^+} \subseteq M$, it follows that $M[j(G_{\kappa})]^{\kappa^+} \subseteq M[j(G_{\kappa})]$ by [8 (Theorem 53)]. Since for every $\beta < \kappa$, there is a dense subset of $\mathbb{Q}$ which is $\le \beta$-closed, it follows that there is a dense subset of $j(\mathbb{Q})$ which is $\le \kappa^+$-closed. Also, since $|\mathbb{Q}| = \kappa$ and $\mathbb{Q}$ has the $\kappa^+$-chain condition, the forcing $\mathbb{Q}$ has at most $\kappa^+$ many maximal antichains. Therefore $j(\mathbb{Q})$ has at most $|j(\kappa^+)| \le \kappa^{+^{\kappa^{+^{< \kappa}}}} = \kappa^{++}$ many maximal antichains in $M[j(G_{\kappa})]$. Thus we can diagonalize to get a generic filter, but we will do so with a master condition as follows. Let $c = \cup g$ be the new club of $\kappa$ and consider $\overline{c} = c \cup \{\kappa\}$ which is in $M[j(G_{\kappa})]$. We have $\delta \in c$ implies $o_{\delta^+-sc}(\delta)^M < F(\delta)$. By Lemma 20, the Mitchell rank for supercompactness does no go up between $M$ and $M[j(G_{\kappa})]$, so that $\delta \in c$ implies $o_{\delta^+-sc}(\delta)^{M[j(G_{\kappa})]} < F(\delta) = j(F)(\delta)$. Since $o_{\kappa^+-sc}(\kappa)^{M[j(G)]}\le o_{\kappa^+-sc}(\kappa)^M = \beta_0 < \min\{o_{\kappa^+-sc}(\kappa)^V, F(\kappa)\}$ it follows that $o_{\kappa^+}(\kappa)^{M[j(G)]} < F(\kappa) = j(F)(\kappa)$. Thus $\overline{c}$  is a closed, bounded subset of $j(\kappa)$ such that every $\delta$ in $\overline{c}$ has $o_{\delta^+-sc}(\delta)^{M[j(G_{\kappa})]} < j(F)(\kappa)$. Thus $\overline{c} \in j(\mathbb{Q})$. Thus diagonalize to get an $M[j(G_{\kappa})]$-generic filter $g^* \subseteq j(\mathbb{Q})$ which contains the master condition $\overline{c}$. Then $j''g \subseteq j(g) = g^*$. Therefore we may lift the embedding to $j:V[G_{\kappa}][g] \to M[j(G_{\kappa})][j(g)]$.

The success of the lifting arguments for $o_{\kappa^+-sc}(\kappa)^M = \beta_0 < \min\{o_{\kappa^+-sc}(\kappa)^V, F(\kappa)\}$, where $\beta_0$ was arbitrary, show that $o_{\kappa^+-sc}(\kappa)^{V[G]} \ge \min\{o_{\kappa^+-sc}(\kappa)^V, F(\kappa)\}$. By Lemma 20, we have $o_{\kappa^+-sc}(\kappa)^{V[G]} \le o_{\kappa^+-sc}(\kappa)^V$. All that remains is to see that $o_{\kappa^+-sc}(\kappa)^{V[G]} \le F(\kappa)$. Consider any $\kappa^+$-supercompact embedding $j:V[G] \to M[j(G)]$ with critical point $\kappa$ in $V[G]$, and the new club $c \subseteq \kappa$, where $\forall \delta < \kappa, \delta \in c$ implies $o_{\delta^+-sc}(\delta)^V < F(\delta)$. This club in any normal fine measure on $P_{\kappa}\kappa^+$. By the induction hypothesis, $\forall \delta, \kappa, \delta \in c$ implies $o_{\delta^+-sc}(\delta)^{V[G]} < F(\delta)$. Applying $j$ to this statement gives $\forall \delta < j(\kappa), \delta \in j(c)$ implies $o_{\delta^+-sc}(\delta)^{M[j(G)]} < j(F)(\delta)$. Since $F \upharpoonright \kappa$ represents $F(\kappa)$, we have $j(F)(\kappa)= F(\kappa)$. Since $\kappa < j(\kappa)$ and $\kappa \in j(c)$ it follows that $o_{\kappa^+-sc}(\kappa)^{M[j(G)]} < F(\kappa)$. Thus, by Lemma 19, since $j$ was arbitrary, $o_{\kappa^+-sc}(\kappa)^{V[G]} \le F(\kappa)$. Thus $o_{\kappa^+-sc}(\kappa)^{V[G]} \le \min\{o_{\kappa^+-sc}(\kappa)^V, F(\kappa)\}$. Thus $o_{\kappa^+-sc}(\kappa)^{V[G]} \le \min\{o_{\kappa^+-sc}(\kappa)^V, F(\kappa)\}$. Thus for any $\kappa$ which is $\kappa^+$-supercompact for which $F \upharpoonright \kappa$ represents $F(\kappa)$ in $V$ has $o_{\kappa^+}(\kappa)^{V[G]} = \min\{o_{\kappa^+-sc}(\kappa)^V, F(\kappa)\}$.
\end{proof}

The most general theorem considers the case of softly killing Mitchell rank for $\theta$-supercompactness for $\theta$-supercompact cardinals $\kappa$ when $\theta > \kappa^+$.

\begin{theorem} For any $V \models ZFC + GCH$, any $\Theta: ORD \to ORD$ and any $F: ORD \to ORD$, there is a forcing extension $V[G]$ where, if $\kappa$ is $\Theta(\kappa)$-supercompact, $\Theta \upharpoonright \kappa$ represents $\Theta(\kappa)$, $F \upharpoonright \kappa$ represents $F(\kappa)$ in $V$, and $\Theta''\kappa \subseteq \kappa$, then $o_{\Theta(\kappa)-sc}(\kappa)^{V[G]} = \min\{o_{\Theta(\kappa)-sc}(\kappa)^V, F(\kappa)\}$.
\end{theorem}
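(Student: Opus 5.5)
The plan is to follow the template of the preceding theorem for $\kappa^+$-supercompactness, replacing $\kappa^+$ by $\Theta(\kappa)$ throughout.

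\textbf{The forcing.} Let $\mathbb{P}$ be an Easton support ORD-length iteration. At each inaccessible stage $\gamma$ with $\Theta''\gamma \subseteq \gamma$, it adds a club $c_\gamma \subseteq \gamma$ such that $\delta \in c_\gamma$ implies $o_{\Theta(\delta)-sc}(\delta)^V < F(\delta)$. At all other stages it forces trivially. The requirement $\Theta''\gamma\subseteq\gamma$ ensures that the relevant degrees below $\gamma$ concern $P_\delta\Theta(\delta)$ with $\Theta(\delta)<\gamma$, so this stage is a legitimate club-adding forcing of size $\gamma$. By Lemma 7 it has, for each $\beta<\gamma$, a $\le\beta$-closed dense subset. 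As before, $\mathbb{P}$ has a closure point at the first inaccessible $\delta_0$. Hence $V\subseteq V[G]$ has the $\delta_0^+$ approximation and cover properties, and Lemma 20 gives $o_{\Theta(\kappa)-sc}(\kappa)^{V[G]}\le o_{\Theta(\kappa)-sc}(\kappa)^V$. As in the earlier proofs, the approximation and cover properties also show that every $\Theta(\kappa)$-closed embedding of $V[G]$ restricts to one of $V$. It follows that $\Theta\upharpoonright\kappa$ and $F\upharpoonright\kappa$ still represent $\Theta(\kappa)$ and $F(\kappa)$ in $V[G]$.

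\textbf{The lower bound.} Fix $\kappa$ satisfying the hypotheses, write $\theta=\Theta(\kappa)$, and assume the conclusion holds for all relevant $\kappa'<\kappa$. Fix $\beta_0<\min\{o_{\theta-sc}(\kappa)^V,F(\kappa)\}$. By Lemma 19 choose a $\theta$-supercompactness embedding $j:V\to M$ with $o_{\theta-sc}(\kappa)^M=\beta_0$. Representation gives $j(\Theta)(\kappa)=\theta$ and $j(F)(\kappa)=F(\kappa)$. Since $M$ agrees with $V$ below $\kappa$ on the relevant measures, the stage-$\kappa$ forcing of $j(\mathbb{P}_\kappa)$ is $\mathbb{Q}$, so $j(\mathbb{P}_\kappa)=\mathbb{P}_\kappa*\dot{\mathbb{Q}}*\mathbb{P}_{\rm tail}$.

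The next nontrivial stage of $j(\mathbb{P})$ beyond $\kappa$ lies above $\theta$. Indeed, in $M$ it is an inaccessible closed under $j(\Theta)$, and since $j(\Theta)(\kappa)=\theta$ it exceeds $\theta$. Hence $\mathbb{P}_{\rm tail}$ has a $\le\theta$-closed dense subset. By GCH and the $\kappa^+$-c.c., $M[G_\kappa][g]^\theta\subseteq M[G_\kappa][g]$ (Hamkins [8]). Counting $|j(\kappa^+)|\le(\kappa^+)^{\theta^{<\kappa}}=\theta^+$ bounds the number of maximal antichains in $M[G_\kappa][g]$ by $\theta^+$. So we diagonalize, building in $V[G_\kappa][g]$ a descending $\theta^+$-sequence meeting all of them, and obtain $G_{\rm tail}$.

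For $j(\mathbb{Q})$ we use the master condition $\overline c=c\cup\{\kappa\}$. Points $\delta\in c$ satisfy $o_{\Theta(\delta)-sc}(\delta)^{M[j(G_\kappa)]}\le o_{\Theta(\delta)-sc}(\delta)^M<F(\delta)$ by Lemma 20. At $\kappa$ itself we have $o_{\theta-sc}(\kappa)^{M[j(G_\kappa)]}\le\beta_0<j(F)(\kappa)$. The same closure and counting argument then yields $g^*\ni\overline c$, and hence the lift $j:V[G_\kappa][g]\to M[j(G_\kappa)][j(g)]$.

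The stages above $\kappa$ but at most $\theta$ are trivial, because inaccessibles there are not closed under $\Theta$ (as $\Theta(\kappa)=\theta$). The forcing beyond $\theta$ is $\le\theta$-closed and adds no subsets of $P_\kappa\theta$. Therefore the lift witnesses, via Lemma 19 and the induction hypothesis applied inside $M[j(G)]$, that $\beta_0<o_{\theta-sc}(\kappa)^{V[G]}$.

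\textbf{The upper bound.} Take any $\theta$-supercompactness embedding $j:V[G]\to M[j(G)]$. By induction, every $\delta$ in the generic club $c_\kappa$ satisfies $o_{\Theta(\delta)-sc}(\delta)^{V[G]}<F(\delta)$. Applying $j$ and using $\kappa\in j(c_\kappa)$, $j(\Theta)(\kappa)=\theta$ and $j(F)(\kappa)=F(\kappa)$, we get $o_{\theta-sc}(\kappa)^{M[j(G)]}<F(\kappa)$. Lemma 19 then gives $o_{\theta-sc}(\kappa)^{V[G]}\le F(\kappa)$.

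\textbf{Main obstacle.} The delicate step is the lifting through $\mathbb{P}_{\rm tail}$ and $j(\mathbb{Q})$. The problem is obtaining enough closure ($\le\theta$) and few enough antichains ($\theta^+$) when $\theta$ may be much larger than $\kappa^+$. This is exactly where $\Theta''\kappa\subseteq\kappa$, representation of $\Theta$, and GCH (giving $\theta^{<\kappa}=\theta$ for regular $\theta$) are needed. If $\theta$ is singular, I would first try replacing $\theta$ by $\theta^{<\kappa}$ in the closure computations.
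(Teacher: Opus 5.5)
Your proposal is correct and follows essentially the same route as the paper. It uses the same Easton-support iteration shooting clubs at (inaccessible) closure points of $\Theta$, the closure-point/approximation-cover argument with Lemma 20 for $o_{\Theta(\kappa)-sc}(\kappa)^{V[G]}\le o_{\Theta(\kappa)-sc}(\kappa)^V$ and for preserving representation, the lift through $j(\mathbb{P}_\kappa)=\mathbb{P}_\kappa*\dot{\mathbb{Q}}*\mathbb{P}_{\mbox{tail}}$ by diagonalizing against $\Theta(\kappa)^+$ antichains with $\Theta(\kappa)$-closure, the master condition $c\cup\{\kappa\}$ for $j(\mathbb{Q})$, and the upper bound via $\kappa\in j(c)$ with $j(\Theta)(\kappa)=\Theta(\kappa)$ and $j(F)(\kappa)=F(\kappa)$; your explicit reason why the next nontrivial stage of $j(\mathbb{P})$ lies above $\theta$ (it must be closed under $j(\Theta)$ and $j(\Theta)(\kappa)=\theta$) fills in a step the paper only asserts.
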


\begin{proof} Suppose $V \models$ ZFC + GCH. Let $\Theta:$ ORD $\to$ ORD and $F:$ ORD $\to$ ORD. Let $\mathbb{P}$ be an Easton support ORD-length iteration which forces at stage $\gamma$ to add a club $c_{\gamma} \subseteq \gamma$ such that $\delta \in c_{\gamma}$ implies $o_{\Theta(\delta)-sc}(\delta)^V < F(\delta)$, whenever $\gamma$ is a a closure point of $\Theta$, meaning $\Theta''\gamma \subseteq \gamma$, otherwise force trivially. Let $G \subseteq \mathbb{P}$ be $V$-generic. Let $\delta_0$ be the first inaccessible cardinal. The forcing before stage $\delta_0$ is trivial, and the forcing at stage $\delta_0$ adds a club to $\delta_0$. Thus, the forcing $\mathbb{P}$ up to and including stage $\delta_0$ has cardinality $\delta_0$ and the forcing past stage $\delta_0$ has a dense set which is closed up to the next inaccessible cardinal. Thus, $\mathbb{P}$ has a closure point at $\delta_0$, thus $V \subseteq V[G]$ satisfies the $\delta_0^+$ approximation and cover properties by [Hamkins 11 (Lemma 13)]. Thus, by Lemma 20, the forcing $\mathbb{P}$ does not increase Mitchell rank for $\theta$-supercompactness for cardinals above $\delta_0$ where $\theta$ is also above $\delta_0$. 

Let $\kappa$ be a $\Theta(\kappa)$-supercompact cardinal such that $\Theta''\kappa \subseteq \kappa$, and $\Theta \upharpoonright \kappa$ represents $\theta(\kappa)$, and $F \upharpoonright \kappa$ represents $F(\kappa)$ in $V$. Here we will need that representing functions are still representing functions in the extension. The fact we will use for this is from [Hamkins 11] which says that if $V \subseteq V[G]$ satisfies the hypothesis of the approximation and cover theorem, then every $\Theta(\kappa)$-closed embedding $j:V[G] \to M[j(G)]$ is a lift of an embedding $j \upharpoonright V : V \to M$ that is in $V$ (and $j \upharpoonright V$ is also a $\Theta(\kappa)$-closed embedding). Thus, if $F \upharpoonright \kappa$ is a representing function for $F(\kappa)$ in the ground model, $(j \upharpoonright V)(F \upharpoonright \kappa)(\kappa) = F(\kappa)$, and thus $F \upharpoonright \kappa$ is a representing function for $F(\kappa)$ with respect to such embeddings $j$
 in $V[G]$. 

 The induction hypothesis is that any $\kappa' < \kappa$, where $\Theta''\kappa' \subseteq \kappa'$, and $\Theta \upharpoonright \kappa'$ represents $\Theta(\kappa)$, and $F \upharpoonright \kappa'$ represents $F(\kappa')$ in $V$, has $o_{\Theta(\kappa)-sc}(\kappa')^{V[G]} = \min\{o_{\Theta(\kappa)-sc}(\kappa')^V, F(\kappa')\}$. Pick $j:V \to M$ a $\Theta(\kappa)$-supercompactness embedding with critical point $\kappa$ such that $o_{\Theta(\kappa)-sc}(\kappa)^M = \beta_0 < \min\{o_{\Theta(\kappa)-sc}(\kappa)^V, F(\kappa)\}$. We will lift $j$ through $\mathbb{P}$. The forcing above $\kappa$ does not affect the Mitchell rank for $\Theta(\kappa)$-supercompactness of $\kappa$ since the forcing past stage $\kappa$ has a dense subset which is closed up to the next closure point of $\Theta$ past $\kappa$. Call $\mathbb{P}_{\kappa}*\dot{\mathbb{Q}}$ the forcing up to and including stage $\kappa$.

 First we will lift $j$ through $\mathbb{P}_{\kappa}$. Let $G_{\kappa} \subset \mathbb{P}_{\kappa}$ be $V$-generic and $g \subseteq \mathbb{Q}$ be $V[G_{\kappa}]$-generic. First, we need an $M$-generic filter for $j(\mathbb{P}_{\kappa})$. Since cp$(j) = \kappa$, the forcings $\mathbb{P}_{\kappa}$ and $j(\mathbb{P}_{\kappa})$ agree up to stage $\kappa$. The forcing $\mathbb{Q}$ adds a club $c \subseteq \kappa$ such that $\forall \delta < \kappa, \delta \in c$ implies $o_{\Theta(\kappa)-sc}(\delta)^V < F(\delta)$. Since $o_{\Theta(\kappa)-sc}(\kappa)^V < F(\kappa)$ implies $o_{\Theta(\kappa)-sc}(\kappa)^M < F(\kappa) = j(F)(\kappa)$, and $j(\Theta)(\kappa) = \Theta(\kappa)$, it follows that the $\kappa$th stage of $j(\mathbb{P}_{\kappa})$ is $\mathbb{Q}$. Thus, $j(\mathbb{P}_{\kappa})$ factors as $\mathbb{P}_{\kappa}*\dot{\mathbb{Q}}*\mathbb{P}_{\mbox{tail}}$ where $\mathbb{P}_{\mbox{tail}}$ is the forcing past stage $\kappa$. Since $G_{\kappa}*g$ is $M$-generic for $\mathbb{P}_{\kappa}*\dot{\mathbb{Q}}$, we only need an $M[G_{\kappa}][g]$-generic filter for $\mathbb{P}_{\mbox{tail}}$ which is in $V[G_{\kappa}][g]$. We will diagonalize to obtain such a filter, after checking that we have met the criteria. Since $|\mathbb{P}_{\kappa}|=|\mathbb{Q}| = \kappa$, and $V \models $ GCH, it follows that the forcing $\mathbb{P}_{\kappa}*\dot{\mathbb{Q}}$ has the $\kappa^+$-chain condition. Since $M^{\Theta(\kappa)} \subseteq M$, it follows [Hamkins 8 (Theorem 54)] that $M[G_{\kappa}][g]^{\Theta(\kappa)} \subseteq M[G_{\kappa}][g]$. Since $|\mathbb{P}_{\kappa}| = \kappa$, and has the $\kappa^+$-chain condition, it has at most $\kappa^{<\kappa^+} = \kappa^+$ many maximal antichains. Since $|j(\kappa^+)| \le \kappa^{+^{\Theta(\kappa)^{< \kappa}}} = \Theta(\kappa)^+$, it follows that $\mathbb{P}_{\mbox{tail}}$ has at most $\Theta(\kappa)^+$ many maximal antichains in $M[G_{\kappa}][g]$. Since $\mathbb{P}_{\mbox{tail}}$ has a dense subset which is closed up to the next closure point of $\Theta$ past $\kappa$, it has a dense subset which is $\le \Theta(\kappa)$-closed. Thus, we can enumerate the maximal antichains of $\mathbb{P}_{\mbox{tail}}$, and build a descending sequence of conditions meeting them all through the dense set which has $\le \Theta(\kappa)$-closure, using this closure and the fact that $M[G_{\kappa}][g]$ is closed under $\Theta(\kappa)$-sequences to build through the limit stages. Then the upward closure of this sequence $G_{\mbox{tail}} \subseteq \mathbb{P}_{\mbox{tail}}$ is $M[G_{\kappa}][g]$-generic and is in $V[G_{\kappa}][g]$. Thus, $j(G_{\kappa}) = G_{\kappa}*g*G_{\mbox{tail}}$ is $M$-generic, it is in $V[G_{\kappa}][g]$ and $j''G_{\kappa} \subseteq j(G_{\kappa})$. Thus, we have the partial lift $j: V[G_{\kappa}] \to M[j(G_{\kappa})]$.

 Next we lift though $\mathbb{Q}$ by finding an $M[j(G_{\kappa})]$-generic filter for $j(\mathbb{Q})$. The forcing $j(\mathbb{Q})$ adds a club $C \subseteq j(\kappa)$ such that $\forall \delta < j(\kappa), \delta \in C$ implies $o_{\Theta(\kappa)-sc}(\delta)^{M[j(G_{\kappa})]} <j(F)(\delta)$. Since $j(G_{\kappa}) \in V[G_{\kappa}][g]$ and $M^{\Theta(\kappa)} \subseteq M$, it follows that $M[j(G_{\kappa})]^{\Theta(\kappa)} \subseteq M[j(G_{\kappa})]$ by [Hamkins 8 (Theorem 53)]. Since for every $\beta < \kappa$, there is a dense subset of $\mathbb{Q}$ which is $\le \beta$-closed, it follows that there is a dense subset of $j(\mathbb{Q})$ which is $\le \Theta(\kappa)$-closed. Also, since $|\mathbb{Q}| = \kappa$ and $\mathbb{Q}$ has the $\kappa^+$-chain condition, the forcing $\mathbb{Q}$ has at most $\kappa^+$ many maximal antichains. Therefore $j(\mathbb{Q})$ has at most $|j(\kappa^+)| \le \kappa^{+^{\Theta(\kappa)^{<\kappa}}} = \Theta(\kappa)^+$ many maximal antichains in $M[j(G_{\kappa})]$. Thus we can diagonalize to get a generic filter with a master condition as follows. Let $c = \cup g$ be the new club of $\kappa$ and consider $\overline{c} = c \cup \{ \kappa \}$ which is in $M[j(G_{\kappa})]$. We have $\delta \in c$ implies $o_{\Theta(\kappa)-sc}(\delta)^V < F(\delta)$. Thus, $\delta \in c$ implies $o_{\Theta(\kappa)-sc}(\delta)^M < F(\delta)$. By Lemma 20, it follows that $\delta \in c$ implies $o_{\Theta(\kappa)-sc}(\delta)^{M[j(G_{\kappa})]} \le o_{\Theta(\kappa)-sc}(\delta)^M < F(\delta) = j(F)(\delta)$. Since $o_{\Theta(\kappa)-sc}(\kappa)^{M[j(G)]} \le o_{\Theta(\kappa)-sc}(\kappa)^M = \beta_0 < \min\{o_{\Theta(\kappa)-sc}(\kappa)^V, F(\kappa)\}$, and $j(\Theta)(\kappa) = \Theta(\kappa)$, it follows that $o_{j(\Theta)(\kappa)-sc}(\kappa)^{M[j(G)]} < F(\kappa) = j(F)(\kappa)$. Thus $\overline{c}$ is a closed, bounded subset of $j(\kappa)$ such that every $\delta$ in $\overline{c}$ has $o_{j(\Theta)(\kappa)-sc}(\delta)^{M[j(G_{\kappa})]} < j(F)(\kappa)$. Thus $\overline{c} \in j(\mathbb{Q})$. Thus diagonalize to get an $M[j(G_{\kappa})]$-generic filter $g^* \subseteq j(\mathbb{Q})$ which contains the master condition $\overline{c}$. Then $j''g \subseteq j(g) = g^*$. Therefore we may lift the embedding to $j:V[G_{\kappa}][g] \to M[j(G_{\kappa})][j(g)]$.

 The success of the lifting arguments for $o_{\Theta(\kappa)-sc}(\kappa)^M = \beta_0 < \min\{o_{\Theta(\kappa)-sc}(\kappa)^V, F(\kappa)\}$, where $\beta_0$ was arbitrary, shows that $o_{\Theta(\kappa)-sc}(\kappa)^V[G] \ge \min\{o_{\Theta(\kappa)-sc}(\kappa)^V, F(\kappa)\}$. By Lemma 20, we have $o_{\Theta(\kappa)-sc}(\kappa)^{V[G]} \le o_{\Theta(\kappa)-sc}(\kappa)^V$. All that remains to see is that $o_{\Theta(\kappa)-sc}(\kappa)^{V[G]} \le F(\kappa)$. Consider any $\Theta(\kappa)$-supercompactness embedding $j:V[G] \to M[j(G)]$ with critical point $\kappa$ in $V[G]$, and the new club $c \subseteq \kappa$, where $\forall \delta < \kappa, \delta \in c$ implies $o_{\Theta(\delta)-sc}(\delta)^V < F(\delta)$. This club is in any normal fine measure on $P_{\kappa}(\Theta(\kappa))$. By the induction hypothesis, $\forall \delta < \kappa, \delta \in c$ implies $o_{\Theta(\delta)-sc}(\delta)^{V[G]} < F(\delta)$. Applying $j$ to this statement gives $\forall \delta < j(\kappa), \delta \in j(c)$ implies $o_{j(\Theta)(\kappa)-sc}(\delta)^{M[j(G)]} < j(F)(\delta)$. Since $F \upharpoonright \kappa$ represents $F(\kappa)$, we have $j(F)(\kappa) = F(\kappa)$, and since $\Theta \upharpoonright \kappa$ represents $\Theta(\kappa)$, we have $j(\Theta)(\kappa) = \Theta(\kappa)$. Since $\kappa < j(\kappa)$ and $\kappa \in j(c)$, it follows that $o_{\Theta(\kappa)-sc}(\kappa)^{M[j(G)]} < F(\kappa)$. Thus by Lemma 19, since $j$ was arbitrary, $o_{\Theta(\kappa)-sc}(\kappa)^{V[G]} \le F(\kappa)$. Thus $o_{\Theta(\kappa)-sc}(\kappa)^{V[G]} \le \min\{o_{\Theta(\kappa)-sc}(\kappa)^V, F(\kappa)\}.$ Thus for any $\kappa$ which is $\Theta(\kappa)$-supercompact for which $\Theta''\kappa \subseteq \kappa$, and $F \upharpoonright \kappa$ represents $F(\kappa)$, and $\Theta \upharpoonright \kappa$ represents $\Theta(\kappa)$ in $V$ has $o_{\Theta(\kappa)-sc}(\kappa)^{V[G]} = \min\{o_{\Theta(\kappa)-sc}(\kappa)^V, F(\kappa)\}$.
 \end{proof}

 The last theorem is about large cardinal properties close to the top. A cardinal $\kappa$ is \textit{huge} with target $\lambda$ if there is an elementary embedding $j:V \to M$, with critical point $\kappa$, such that $j(\kappa) = \lambda$ and $M^{\lambda} \subseteq M$. A cardinal $\kappa$ is \textit{superhuge} if it is huge with target $\lambda$ for unboundedly many cardinals. 

 \begin{theorem} If $\kappa$ is huge with target $\lambda$, then there is a forcing extension where $\kappa$ is still huge with target $\lambda$, but $\kappa$ is not superhuge.
 \end{theorem}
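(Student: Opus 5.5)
The idea is to destroy all huge targets above $\lambda$ by a forcing that does no harm at or below $\lambda$. Superhugeness asks for targets unboundedly high, so it suffices that in the extension $\kappa$ has no huge target above some fixed bound, while the target $\lambda$ survives. The most economical route is to go small rather than large: take $V_\delta$ for a suitable $\delta$ instead of forcing.

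Concretely, fix $j:V\to M$ witnessing that $\kappa$ is huge with target $\lambda$, so $j(\kappa)=\lambda$ and $M^{\lambda}\subseteq M$. Hugeness with target $\lambda$ is witnessed by a set-sized object, namely the normal fine measure $U$ on $\{x\subseteq\lambda : \mathrm{ot}(x)=\kappa\}$ derived from $j$ via the seed $j''\lambda$. Hence it is witnessed inside $V_\eta$ for any $\eta$ sufficiently above $\lambda$, say any inaccessible $\eta>2^{2^{\lambda}}$, and ``$\kappa$ is huge with target $\lambda$'' is absolute between $V$ and $V_\eta$. This absoluteness holds because the ultrapower by $U$ can be computed inside $V_\eta$, with closure of the ultrapower under $\lambda$-sequences checked there.

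If one insists on a genuine forcing extension, as the theme of the paper suggests, the plan is as follows. Let $\eta$ be the least inaccessible above $2^{2^{\lambda}}$, or just the least cardinal above $2^{2^\lambda}$ that is a limit of cardinals of cofinality $>\lambda$. Then force with $\mathrm{Coll}(\eta^{+}, \cdot)$ collapsing in a class-length way so that no cardinal above $\eta$ is a limit of hugeness targets for $\kappa$. A cleaner choice is a forcing that makes every regular $\gamma>\eta$ fail to be measurable-like in the relevant sense: iterate, with Easton support above $\eta$, adding a non-reflecting stationary subset of $\mathrm{cof}(\omega)$ at every regular $\gamma>\eta$.

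The main claim is then that if $j':V[G]\to M'$ is huge with target $\gamma>\eta$, then $M'^{\gamma}\subseteq M'$ and $j'(\kappa)=\gamma$. It follows that $\gamma$ is measurable in $M'$ (indeed $\gamma$ is inaccessible and reflects stationary sets, since $\kappa$ is supercompact up to $\gamma$ in $V[G]$ via $j'$). Ordinary $\kappa$-supercompactness-to-$\gamma$ gives stationary reflection at all regular $\mu\in(\kappa,\gamma]$, which contradicts the non-reflecting stationary set added at any regular $\mu\in(\eta,\gamma)$. Thus no target beyond the least regular cardinal above $\eta$ survives, and $\kappa$ is not superhuge.

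For preservation of the target $\lambda$, the forcing is $\le\eta$-strategically closed, so it adds no subsets of $V_\eta$. The measure $U$ and its hugeness properties therefore remain intact, and $\kappa$ is still huge with target $\lambda$ in $V[G]$. Alternatively, one can lift $j$ directly: $j(\mathbb{P})$ is trivial below $j(\eta)$, so $j''G$ generates a master condition by the closure of $M[G]$.

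The main obstacle is the reflection step. It requires checking that a huge embedding with target $\gamma$ in $V[G]$ really yields $\gamma$-supercompactness, so that reflection holds at some regular $\mu$ in $(\eta,\gamma)$. It also requires such a $\mu$ to exist, which holds since $\gamma$ is inaccessible there.
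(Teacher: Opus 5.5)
Your working argument, the Easton iteration adding non-reflecting stationary subsets of $\mu\cap\mathrm{cof}(\omega)$ at regular $\mu>\eta$, is correct. It takes a genuinely different route from the paper.

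The paper forces with $\mathrm{Add}(\omega,1)*\mathrm{Add}(\lambda^+,1)$ and cites the Hamkins--Shelah superdestructibility theorem to conclude that $\kappa$ is not $\lambda^+$-supercompact. A huge embedding with target $\gamma>\lambda^+$ would witness $\lambda^+$-supercompactness, so all targets are bounded. The target $\lambda$ survives because the Cohen real is small and the second factor adds no subsets of $\lambda$.

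You replace the Hamkins--Shelah input with the classical fact that $\mu$-supercompactness of $\kappa$ forces reflection of stationary subsets of $\mu\cap\mathrm{cof}(\omega)$, and you destroy that reflection directly. Your route buys a self-contained, elementary proof with no gap-forcing machinery and no small-forcing step. Your forcing is $\le\eta$-strategically closed, so it adds no subsets of $\lambda$. Hence the huge measure, which concentrates on $[\lambda]^\kappa$ (a set of size $\lambda$, as $\lambda$ is inaccessible), survives verbatim. The paper's route is shorter given the citation, and via Hamkins--Shelah it also kills $\lambda^+$-strong compactness.

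Some cleanup is needed.
\begin{itemize}
\item The $V_\eta$ idea does not produce a forcing extension of $V$, so it cannot prove the statement. Also, an inaccessible above $2^{2^\lambda}$ need not exist.
\item The class-length collapse is too vague to evaluate. Drop it along with the $V_\eta$ idea.
\item The class iteration is more than you need. A single stage adding a non-reflecting stationary subset of $\lambda^+\cap\mathrm{cof}(\omega)$ already bounds every target by $\lambda^+$, exactly parallel to the paper's use of $\lambda^+$.
\item If you keep the iteration, you must verify that the set $S_\mu$ added at stage $\mu$ stays stationary and non-reflecting in $V[G]$. This holds because the tail is $\le\mu$-strategically closed, so stationarity is preserved; the clubs witnessing non-reflection persist; and cofinalities only drop.
\item What you call the main obstacle is immediate. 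From $j'(\kappa)=\gamma$ and $(M')^{\gamma}\subseteq M'$, the set $\{j'(\xi):\xi<\mu\}$ lies in $M'$ for every $\mu<\gamma$.
\item \emph{Adds no subsets of $V_\eta$} should read \emph{adds no new subsets of $\lambda$}, which is all you use. For non-inaccessible $\eta$, $|V_\eta|$ may exceed $\eta$.
\end{itemize}
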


 \begin{proof} Suppose $\kappa$ is huge with target $\lambda$. Force with Add$(\omega,1)*$Add$(\lambda^+,1)$. Then, by Hamkins and Shelah [13], in the extension, $\kappa$ is not superhuge, since it is not even $\lambda^+$-supercompact, hence not $\lambda^+$-huge. But, $\kappa$ is still huge with target $\lambda$ since the forcing is $\le \lambda$-closed.
\end{proof}
\vspace{.5 in}
\begin{center}\textbf{OPEN QUESTIONS:}\end{center} 
\vspace{.2 in}
\noindent 1. Can we define $\varepsilon_0$-inaccessible cardinals? 

\noindent 2. If $\kappa$ is a $\Sigma_{n + 1}$-reflecting cardinal, is there $V[G]$ where $\kappa$ is $\Sigma_n$-reflecting, but not $\Sigma_{n+1}$ reflecting?

\noindent 3. Are there killing-them-softly results for $\eta$-extentible cardinals? How about their targets?

\noindent 4. Does every large cardinal submit to degrees? For those that do, can we softly kill them? What do the forcings that do so say about the degrees?

\end{document}